\documentclass[12pt, a4paper]{amsart}
\usepackage{amssymb}
\usepackage{amsmath,esint,enumitem}
\usepackage{colortbl}
\usepackage[dvipsnames]{xcolor}
\usepackage[colorlinks, allcolors=Fuchsia,pdfstartview=,pdfpagemode=UseNone]{hyperref} 
\usepackage{graphics,fullpage}
\usepackage{multirow}
\hfuzz=12pt 

\makeatletter
\@namedef{subjclassname@2020}{\textup{2020} Mathematics Subject Classification}
\makeatother

\newtheorem{theorem}[equation]{Theorem}
\newtheorem{lemma}[equation]{Lemma}
\newtheorem{proposition}[equation]{Proposition}

\theoremstyle{definition}
\newtheorem{definition}[equation]{Definition}

\newtheorem{example}[equation]{Example}

\theoremstyle{remark}
\newtheorem{remark}[equation]{Remark}
\numberwithin{equation}{section}

\addtolength{\topmargin}{6pt}
\addtolength{\textheight}{-12pt}

\let\oldmarginpar\marginpar
\renewcommand\marginpar[1]{\-\oldmarginpar[\raggedleft\footnotesize #1]%
{\raggedright\footnotesize #1}}

\newcommand{\supp}{\operatorname{supp}}
\DeclareMathOperator*{\osc}{osc}
\newcommand{ \N }{ \mathbb{N} }
\newcommand{ \R }{ \mathbb{R} }
\newcommand{ \Rn }{ {\mathbb{R}^n} }

\newcommand{\Phiw}{{\Phi_{\mathrm{w}}}}
\newcommand{\Phic}{{\Phi_{\mathrm{c}}}}

\newcommand{\tphi}{{\tilde\phi}}

\newcommand{\tpsi}{{\tilde\psi}}
\newcommand{\ttheta}{{\tilde\theta}}

\newcommand{\tomega}{\tilde\omega}

\newcommand{\tA}{{\tilde A}}
\newcommand{\tu}{{\tilde u}}

\renewcommand{\epsilon}{\varepsilon}
\renewcommand{\phi}{\varphi}
\renewcommand{\le}{\leqslant}
\renewcommand{\ge}{\geqslant}
\renewcommand{\leq}{\leqslant}
\renewcommand{\geq}{\geqslant}
\renewcommand{\div}{\mathrm{div}}
\newcommand{\loc}{\mathrm{loc}}
\newcommand{\BMO}{\mathrm{BMO}}


\newcommand{\ainc}[1]{\hyperref[ainc]{{\normalfont(aInc){\ensuremath{_{#1}}}}}}
\newcommand{\adec}[1]{\hyperref[adec]{{\normalfont(aDec){\ensuremath{_{#1}}}}}}
\newcommand{\inc}[1]{\hyperref[inc]{{\normalfont(Inc){\ensuremath{_{#1}}}}}}
\newcommand{\dec}[1]{\hyperref[dec]{{\normalfont(Dec){\ensuremath{_{#1}}}}}}
\newcommand{\azero}{\hyperref[azero]{{\normalfont(A0)}}}
\newcommand{\aone}{\hyperref[aone]{{\normalfont(A1)}}}

\newcommand{\wVA}{\hyperref[wVA]{{\normalfont(wVA1)}}}
\newcommand{\VA}{\hyperref[VA]{{\normalfont(VA1)}}}

\newcommand{\VMA}{\hyperref[VMA]{{\normalfont(VMA1)}}}

\newcommand{\DMA}[1]{\hyperref[DMA]{{\normalfont(DMA1)\ensuremath{_{#1}}}}}
\newcommand{\SA}{\hyperref[SA]{{\normalfont(SA1)}}}
\newcommand{\aonen}[1]{\hyperref[aone]{{\normalfont(A1-\ensuremath{#1})}}}
\newcommand{\VAn}[1]{\hyperref[VA]{{\normalfont(VA1-\ensuremath{#1})}}}
\newcommand{\wVAn}[1]{\hyperref[wVA]{{\normalfont(wVA1-\ensuremath{#1})}}}
\newcommand{\VMAn}[1]{\hyperref[VMA]{{\normalfont(VMA1-\ensuremath{#1})}}}
\newcommand{\wVMAn}[1]{\hyperref[wVMA]{{\normalfont(wVMA1-\ensuremath{#1})}}}
\newcommand{\DMAn}[1]{\hyperref[DMA]{{\normalfont(DMA1-\ensuremath{#1})}}}
\newcommand{\wDMAn}[1]{\hyperref[wDMA]{{\normalfont(wDMA1-\ensuremath{#1})}}}
\newcommand{\SAn}[1]{\hyperref[SA]{{\normalfont(SA1-\ensuremath{#1})}}}

\begin{document}

\title{Mean oscillation conditions for nonlinear equation and regularity results}

\author{Peter H\"ast\"o}
\address{Department of Mathematics and Statistics, FI-00014 University of Helsinki, Finland
and 
Department of Mathematics and Statistics, FI-20014 University of Turku, Finland}
\email{peter.hasto@helsinki.fi}

\author{Mikyoung Lee}
\address{Department of Mathematics and Institute of Mathematical Science, Pusan National University, Busan 46241, Republic of Korea}
\email{mikyounglee@pusan.ac.kr}

\author{Jihoon Ok}
\address{Department of Mathematics, Sogang University, Seoul 04107, Republic of Korea}
\email{jihoonok@sogang.ac.kr}

\thanks{}

\date{\today}
\subjclass[2020]{35J62, 35B65, 35R05, 35J92, 46E35}
\keywords{Mean oscillation, 
gradient continuity, 
Calder\'on--Zygmund estimate, 
non-standard growth, 
non-uniformly elliptic, 
quasi-isotropic, 
generalized Orlicz space}

\begin{abstract}
We consider general nonlinear elliptic equations of the form 
\[ 
\operatorname{div} A(x,Du) = 0 \quad \text{in } \Omega, 
\] 
where $ A:\Omega \times \Rn \to \Rn $ satisfies a quasi-isotropic $(p,q)$-growth condition, which is equivalent to the point-wise uniform ellipticity of $A$. We establish sharp and comprehensive mean oscillation conditions on $A(x,\xi)$ with respect to the $x$ variable to obtain $C^1$- and $W^{1,s}$-regularity results. The results provide new conditions even in the standard $p$-growth case 
with coefficient $\operatorname{div}(a(x)|Du|^{p-2}Du)=0$. 
Also covered are non-standard growth cases such as variable exponent and double-phase. 
Furthermore, we include Sobolev--Lorentz-type conditions as part of our unified framework.
\end{abstract}

\maketitle


\section{Introduction}\label{sect:intro}

We study regularity properties of weak solutions to the following nonlinear, non-autonomous equation
\begin{equation}\tag{$\div A$}\label{maineq}
\mathrm{div} A(x,Du) =0
\end{equation}
in a domain $\Omega\subset \R^n$, $n\ge 2$, where the nonlinearity 
$A:\Omega\times \R^n\to \R^n$ satisfies the quasi-istropic $(p,q)$-growth condition in Definition~\ref{def:quasiisotropy}. In particular, we are interested in sharp 
conditions on $A$ of mean oscillation type that imply desired 
regularity of the weak solution to \eqref{maineq} and cover all 
different types of assumptions of earlier results.
Our results are new even in the standard $p$-growth case 
with coefficient $\operatorname{div}(a(x)|Du|^{p-2}Du)=0$ and allow us to treat previously 
disparate Sobolev--Lorentz-type conditions, among others, 
as part of a unified framework.

Let us start with reviewing relevant previous results. 
Regularity is well-known for linear equations with coefficient, 
$A(x,\xi) = M(x)\xi$, i.e. 
\begin{equation}\tag{$\Delta_{2,M}$}\label{eq:linear}
\mathrm {div}(M(x)Du)=0 \quad \text{in }\ \Omega,
\end{equation}
where the coefficient matrix $M: \Omega\to \mathbb M_n$ satisfies $L^{-1}\le M(x)e \cdot e \le L$ for 
some $L\ge 1$ and all $x \in \Omega$ and $e\in \partial B_1(0)$. 
We consider three implications in various settings:
\begin{align}
\tag{$\Rightarrow C^{1,\alpha}$}\label{eq:holder-C} 
\omega(r)\lesssim r^\beta &\quad \Longrightarrow \quad u\in C^{1,\alpha}_{\loc}(\Omega),\\
\tag{$\Rightarrow W^{1,s}$}\label{eq:vanishing-C}  
\lim_{r\to 0^+} \omega(r)=0 &\quad \Longrightarrow \quad u\in W^{1,s}_{\loc}(\Omega) \ \ \forall s>1,\\
\tag{$\Rightarrow C^1$}\label{eq:Dini-C} 
\int_0^1 \omega (r)\frac{dr}{r}<\infty &\quad \Longrightarrow \quad u \in C^1_\loc(\Omega),
\end{align}
where $\omega$ is chosen differently according to the setting. 
%
Note that \eqref{eq:vanishing-C} includes the result $u\in C^{0,\alpha}$ for any $\alpha\in (0, 1)$ by the Sobolev embedding, 
as well as Calderón--Zygmund type gradient estimates for the corresponding nonhomogeneous equations, as stated in Theorem~\ref{thm:CZ}.
In the linear setting with $\omega$ equal to the point-wise continuity modulus 
\[
\omega_M(r):= \sup_{B_r\subset \Omega}\sup_{x,y\in B_r} |M(x)-M(y)|
\]
the implications \eqref{eq:holder-C}, \eqref{eq:vanishing-C} and \eqref{eq:Dini-C} 
were proved by Caccioppoli/Schauder \cite{Caccio34, Scha34, Scha37}, Agmon--Douglis--Nirenberg 
\cite{ADN59} and Hartman--Wintner \cite{HW55}, respectively. 

More recently, Di Fazio \cite{DiFazio96} and Dong--Kim \cite{DongKim17} showed, respectively, 
that \eqref{eq:vanishing-C} and \eqref{eq:Dini-C} remain true under the weaker assumption that 
$\omega$ is 
the modulus of mean continuity
\[
\bar\omega_M(r):= \sup_{B_r\subset \Omega}\fint_{B_r}\fint_{B_r} |M(x)-M(y)|\, dx\,dy.
\]
Note that the implication \eqref{eq:holder-C} is trivial for $\bar\omega_M$ since 
this modulus has an upper bound $r^\beta$ if and only if $\omega_M$ does, by the Campanato embedding. 
We refer to \cite{Ancona09,Brezis08,DongKim18,JMV09,Li17} for more related results and the sharpness of  
mean oscillation type conditions. In short, mean oscillation type conditions are essentially the optimal ones in regularity theory for partial differential equations.

Many of the above implications have been generalized to the $p$-Laplace equations with coefficients of the form
\begin{equation}\tag{{$\Delta_{p,M}$}}\label{eq:pLaplace}
\mathrm {div}\left((M(x)Du\cdot Du)^{\frac{p-2}{2}}M(x)Du\right)=0 \quad \text{in }\ \Omega, 
\quad 1<p<\infty,
\end{equation}
i.e. $A(x,\xi) = (M(x)\xi \cdot \xi)^{\frac{p-2}{2}}M(x)\xi$. 
Manfredi first proved \eqref{eq:holder-C} in \cite{Man88}.
Kuusi--Mingione \cite{KuuMin14} established the implication \eqref{eq:Dini-C} 
with the point-wise continuity modulus $\omega=\omega_M$, whereas  
Kinnunen--Zhou \cite{KinZhou99} established the implication \eqref{eq:vanishing-C} 
with the mean continuity modulus $\omega=\bar\omega_M$.
However, the implication \eqref{eq:Dini-C} with mean continuity modulus $\omega=\bar\omega_M$ has remained open for the $p$-Laplacian, even for scalar weights $M(x)=a(x) I_n$, where $I_n$ is the 
identity matrix.

Research into equations with nonstandard growth conditions has exploded in the past 25 years. 
Nonstandard growth means that the growth of the equation strongly depends on the $x$ variable, 
so that the equation does not satisfy the stronger global version of uniform ellipticity 
defined below.
A model equation with nonstandard growth is the $p(x)$-Laplace equation
\begin{equation}\tag{{$\Delta_{p(\cdot)}$}}\label{eq:pxLaplace}
\mathrm{div} \left(|Du|^{p(x)-2}Du\right)=0
\quad \text{in }\ \Omega, \quad 1<p_1\le p(x)\le p_2.
\end{equation}
Here, 
\eqref{eq:holder-C} with the point-wise modulus of continuity $\omega=\omega_p$ of $p$ was first proved by Coscia--Mingione \cite{CoMin99} and Acerbi--Mingione \cite{AceMin01}.  With 
$\omega(r)=\omega_p(r)\log \frac1r$ now including an extra logarithm,
the implications \eqref{eq:vanishing-C} and \eqref{eq:Dini-C} were proved in \cite{AceMin05,BOR16} and \cite{Ok16}, respectively. 
However, to the best of our knowledge no results have been proved under mean continuity conditions 
for the $p(\cdot)$-Laplacian. 
Similarly, for the borderline double phase problem
\begin{equation}\tag{{$\Delta_{\text{bdp}}$}}\label{eq:bdp}
\mathrm{div} \left(|Du|^{p-2}Du +a(x) \log (1+|Du|) |Du|^{p-2}Du \right) =0 
\quad \text{in }\ \Omega, 
\end{equation}
where $1<p<\infty$ and $0\le a(x) \le L$, 
regularity results have been obtained under point-wise continuity assumptions \cite{BarC22,BarCM16,ByunOh17}
but no results are available when assuming only mean continuity. 

In recent years, $C^{0,1}$- and $C^1$-regularity results under Sobolev--Lorentz type conditions have been developed. De Filippis and Mingione \cite{DeFilMin21} proved that if 
$M(x)=a(x)I_n$, where
$a\in W^{1,1}(\Omega)$ and $|Da|$ is in the Lorentz space $L^{n,1}(\Omega)$, then the weak solution to \eqref{eq:pLaplace} is locally Lipschitz.
In fact, they considered a more general class of nonuniformly elliptic equations.
Baroni and Coscia \cite{Bar23, Bar25, BarC22} have obtained $C^1$-regularity results for 
the \eqref{eq:bdp} and \eqref{eq:pxLaplace} under Lorenz-type assumptions 
on the coefficient and exponent. 
One of the advantages of our general framework is that also assumptions of Sobolev--Lorentz type 
are included as special cases of our Dini mean continuity condition, see Proposition~\ref{prop:Lorentz}.


\begin{table}\label{table:summary}
\begin{tabular}{lcccccc}
Assumption type&Conclusion&\eqref{eq:linear} & \eqref{eq:pLaplace} & \eqref{eq:pxLaplace} &
\eqref{eq:bdp}& \eqref{maineq} \\[6pt]
\multirow{3}{*}{Point-wise} 
&\eqref{eq:holder-C}& \cite{Caccio34, Scha34, Scha37} & \cite{Man88} & \cite{AceMin01,CoMin99} &\cite{BarCM16}&\cite{HasO22, HasO22b}\\
&\eqref{eq:vanishing-C}& \cite{ADN59}\phantom{*} &\cite{KinZhou99}& \cite{AceMin05,BOR16} & 
\cite{ByunOh17} &new\\
& \eqref{eq:Dini-C}& \cite{HW55}\phantom{*} & \cite{KuuMin14}&\cite{Ok16}& \cite{BarC22} &new\\[8pt]
\multirow{2}{*}{Mean}&\eqref{eq:vanishing-C}& \cite{DiFazio96}\phantom{*} &\cite{KinZhou99}& new & new &new\\
& \eqref{eq:Dini-C} &\cite{DongKim17}* & new& new& new &new\\[6pt]
Lorenz & $\Rightarrow C^1$ &\cite{DeFilMin21}\phantom{*}&\cite{DeFilMin21}&\cite{Bar23}& \cite{Bar25} & new\\[8pt]
\end{tabular}
\caption{A summary of previous results for different equations, assumptions and conclusions. 
The asterisk is explained in Remark~\ref{rem:gamma}.}
\end{table}

The results mentioned so far are summarized in Table~\ref{table:summary}. In the 
table we see that results with mean continuity assumptions are especially lacking, 
although also other categories have gaps for more complicated types of examples. 
In particular, general equations of type \eqref{maineq} have almost no 
higher regularity results and neither do special cases not mentioned in the 
table like variable exponent double phase energies (e.g., \cite{OhnS25, WeiXY25}).

In this paper, we fill in the blanks in the table by proving results with mean continuity assumptions 
for a general equation with the following fundamental growth and ellipticity conditions on the nonlinearity $A(x,\xi)$. For \ainc{} and \adec{}, see Definition~\ref{def:ainc}.

\begin{definition}\label{def:quasiisotropy}
Let $1<p<q$. We say that $A:\Omega\times \R^n\to \R^n$ with 
$A(x,\cdot)\in C^1(\R^n\setminus\{0\},\R^n)$ satisfies \textit{quasi-isotropic $(p,q)$-growth} condition if
\begin{itemize}
\item[(i)] $D_\xi A(x,\xi)$ satisfies \ainc{p-2} and \adec{q-2} with constant $L\ge 1$ ($(p,q)$-growth); 
\item[(ii)] for every $x\in\Omega$ and $e,\xi,\xi'\in\R^n$ with $|e|=1$ and $|\xi|=|\xi'|>0$,
\[
|D_\xi A(x,\xi')| \le L\, D_\xi A(x,\xi)e \cdot e
\] 
for some $L\ge 1$ (quasi-isotropic ellipticity).
\end{itemize}
\end{definition}

When $A(x,\xi)=D_\xi f(x,\xi)$ for some $f(x,\cdot)\in C^1(\R^n)\cap C^2(\R^n\setminus\{0\})$, the condition (ii) is equivalent to the following \textit{point-wise uniform ellipticity} condition:
\[
\mathcal R (x,t) := \frac{\sup\{\text{eigenvalues of }D_\xi A(x,\xi) \,:\, |\xi|=t \}}{\inf\{\text{eigenvalues of }D_\xi A(x,\xi)\, :\, |\xi|=t \}}\le L \quad \text{for all }\ x\in \Omega\ \ \text{and}\ \ t>0,
\]
which plays an important role in regularity theory of partial differential equations and the calculus of variations.  In particular, sharp $C^{1,\alpha}$-regularity under oscillation type condition on $A$ has been obtained in \cite{HasO22,HasO22b,HasO23}.
Note that the stronger global version of uniform ellipticity condition:
\[
\mathcal R_\Omega (t) := \frac{\sup\{\text{eigenvalues of }D_\xi A(x,\xi) \,:\, x\in\Omega,\ |\xi|=t \}}{\inf\{\text{eigenvalues of }D_\xi A(x,\xi)\, :\, x\in\Omega,\ |\xi|=t \}}\le L \quad \text{for all }\ t>0
\]
is significant as well. If the nonlinearity $A$ does not satisfy the global uniform ellipticity, 
an additional condition is essential to obtain even basic regularity results like H\"older continuity 
and higher integrability.
This situation is often referred to as the nonstandard growth case and occurs for instance 
in the $p(x)$-Laplace equation \eqref{eq:pxLaplace} and the double phase problem \eqref{eq:dp}.
We also refer to recent development in regularity theory for nonuniformly elliptic equations by De Filippis and Mingione \cite{DeFilMin21,DeFilMin23,DeFilMin231,DeFilMin25}. 

Another important model equation is the double phase equation
\begin{equation}\tag{{$\Delta_{\text{dp}}$}}\label{eq:dp}
\mathrm{div} \left(|Du|^{p-2}Du +a(x)  |Du|^{q-2}Du \right) =0 
\quad \text{in }\ \Omega, 
\end{equation}
where $1<p\le q <\infty$ and $0\le a(x) \le L$. For this equation,  $C^{1,\alpha}$- and $W^{1,s}$-regularity results have been established in \cite{BarColMin18,ColMin15,ColMin16,DeFilMin20} by assuming point-wise continuity $a\in C^{0,\alpha}(\Omega)$ and the inequality $\frac qp \le 1+ \frac{\alpha}{n}$. Note that this energy is not included in the Table~\ref{table:summary}, since there is 
no gap for mean continuity results due to the Campanato embedding, see Example~\ref{eg:double}.

Now, we state our main results, Theorems~\ref{thm:CZ} and \ref{thm:C1}, which provide sharp and comprehensive $C^1$- and $W^{1,s}$-regularity results for (point-wise) uniformly elliptic equations. Moreover, we will see in Section~\ref{sect:examples} that the theorems imply new results even in the special cases of Table~\ref{table:summary}. Define 
\[
A^{(-1)}(x,\xi):= |\xi| A(x,\xi).
\]
The conditions on $A$ in the following theorems are explained in Definitions~\ref{def:continuity} and 
\ref{def:meanoscillation}. 

We first consider a nonhomogeneous 
version of \eqref{maineq} in the context of Calder\'on--Zygmund type estimates:
\begin{equation}\tag{{$\div A; F$}}\label{maineq1}
\mathrm{div} A(x,Du) = \mathrm{div}A(x,F) 
\quad \text{in }\ \Omega,
\end{equation}
where the given function $F\in L^1 (\Omega)$ satisfies $ |A^{(-1)}(\cdot, F)| \in L^1_\loc(\Omega)$. 
We say that a function $u\in W^{1,1}_\loc(\Omega)$ is a weak solution to \eqref{maineq1} if $|A^{(-1)}(\cdot, Du)|\in L^1_\loc (\Omega)$ and 
\[
\int_{\Omega} A(x,Du)\cdot Dv \, dx = \int_{\Omega} A(x,F)\cdot Dv \, dx
\]
for all $v\in C^\infty_c(\Omega).$
Note that 
\eqref{maineq} is the special case when $F\equiv 0$, so the following result implies 
in particular the $W^{1,s}$-regularity for the equation \eqref{maineq}, 
i.e.\ \eqref{eq:vanishing-C}.

\begin{theorem}\label{thm:CZ}
Let $A:\Omega \times\R^n\to \R^n$ satisfy the quasi-isotropic $(p,q)$-growth condition, and let $u\in W^{1,1}_\loc(\Omega)$ with $|A^{(-1)}(\cdot, Du)|\in L^1_\loc (\Omega)$ be a weak solution to \eqref{maineq1}.
If $A^{(-1)}$ satisfies \VMA{} and \SA{} and $|A^{(-1)}(\cdot, F)| \in L^s_\loc(\Omega)$ for some $s>1$, then $|A^{(-1)}(\cdot, Du)| \in L^s_\loc(\Omega)$. Moreover, for any $\Omega'\Subset \Omega$ there exists a small $R_0>0$ depending on $n,p,q,L,s,\theta_1, \Omega'$ and $Du$ such that
\[
\fint_{B_r} |A^{(-1)}(x,Du)|^s\,dx \le c\left( \fint_{B_{2r}} |A^{(-1)}(x,Du)|\,dx \right)^s + c\fint_{B_{2r}} |A^{(-1)}(x,F)|^s\,dx+c
\]
for some $c=c(n,p,q,L,s)>0$,
whenever $r\le R_0$ and $B_{2r}\subset\Omega'$.
\end{theorem}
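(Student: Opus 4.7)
The plan is to adapt the Caffarelli--Peral / Acerbi--Mingione good-$\lambda$ scheme to the quasi-isotropic $(p,q)$-growth setting, with the mean oscillation hypotheses \VMA{} and \SA{} feeding a comparison step at small scales. The output is a distributional density estimate for the maximal function $\M(|A^{(-1)}(\cdot,Du)|)$, which after integration over $\lambda$ yields the stated reverse-H\"older type bound. A self-improving higher integrability at some exponent $s_0>1$, needed to start the iteration, is obtained beforehand via Gehring's lemma from the Caccioppoli estimate for \eqref{maineq1}.

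For the comparison step I fix $B_\rho(x_0)\Subset\Omega'$ with $\rho\le R_0$ and introduce an autonomous averaged nonlinearity $\bar A(\xi)$ attached to $A$ on $B_\rho$, consistent with the natural Orlicz function of the problem. Letting $v$ solve $\mathrm{div}\,\bar A(Dv)=0$ in $B_\rho$ with $u-v$ in the appropriate Orlicz--Sobolev space with zero boundary values, the $x$-independent $C^{1,\alpha}$-results of \cite{HasO22, HasO22b} yield the $L^\infty$ bound and reverse H\"older inequality for $|A^{(-1)}(\cdot,Dv)|$ on $B_{\rho/2}$ required by the scheme. Testing the difference of the weak formulations of $u$ and $v$ with $u-v$ and using the monotonicity of $A(x,\cdot)$ reduces the error to an integral of $|A^{(-1)}(x,Du)-\bar A^{(-1)}(Du)|$, which is precisely what \VMA{} controls in $L^1$-average at scales $\rho\le R_0$, provided \SA{} is used as the structural tool to handle the degenerate-argument regime.

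With this estimate in hand, the level-set argument proceeds in standard fashion. Setting $\lambda_0:=1+\fint_{B_{2r}}(|A^{(-1)}(\cdot,Du)|+|A^{(-1)}(\cdot,F)|)\,dx$ and fixing a threshold $\lambda\gg\lambda_0$, a Vitali stopping-time decomposition is applied to $\{\M(|A^{(-1)}(\cdot,Du)|)>\lambda\}$. On each stopping ball $Q$ with $\M(|A^{(-1)}(\cdot,F)|)\le\delta\lambda$, the comparison estimate gives
\[
\bigl|\bigl\{\M(|A^{(-1)}(\cdot,Du)|)>A\lambda\bigr\}\cap Q\bigr|\le \eta\,|Q|,
\]
with $\eta$ as small as we wish by taking $R_0$ small, and integrating the resulting good-$\lambda$ inequality over $\lambda$ delivers the $L^s$ bound.

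The main obstacle is the comparison step. Because $A$ is nonlinear and the growth is $x$-dependent in a genuinely non-standard way, the natural oscillation quantity is the normalized one $|A^{(-1)}(x,\xi)-A^{(-1)}(y,\xi)|$ rather than the oscillation of $A$ itself, and the interplay between the factor $|\xi|$ in the definition of $A^{(-1)}$ and the quasi-isotropic ellipticity is where \SA{} becomes indispensable: it is what guarantees that the mean-oscillation error can be bounded by integrals already appearing on the right-hand side. This also explains why $R_0$ depends on $Du$: the smallness needed to absorb the comparison error is calibrated to $\fint_{B_{2r}}|A^{(-1)}(\cdot,Du)|\,dx$, so how small a scale is required is dictated by how fast $\bar\omega_{A^{(-1)}}(\rho)$ decays, which is the quantitative content of \VMA{}.
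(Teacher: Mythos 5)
Your overall strategy (comparison to an autonomous approximation followed by a level-set/stopping-time argument) is sound, but it diverges from the paper's proof at two structural points, and at the second of these there is a genuine gap. First, the paper deliberately runs the \emph{maximal-function-free} version of the scheme: the Vitali stopping-time decomposition is performed directly on super-level sets of $\phi(x,|Du|)$, with a truncation $\phi_k:=\min\{\phi(x,|Du|),k\}$ to keep all integrals a priori finite, whereas you decompose $\{\M(|A^{(-1)}(\cdot,Du)|)>\lambda\}$. The two are usually interchangeable, but in the generalized Orlicz setting the latter requires boundedness of the maximal operator, which is a delicate additional input; the paper's remark that in this step one "must essentially use the \aone{} condition of $A^{(-1)}$" points at exactly this. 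Second, the paper uses a two-step comparison: first $u\to u_r$ (solving the \emph{homogeneous} equation with the original $A(x,\cdot)$), which is where the source $F$ enters, and then $u_r\to \tilde u_r$ (solving the autonomous problem), which is where the mean oscillation enters (Lemma~\ref{lem:approximation}). You propose a single comparison $u\to v$ with $v$ autonomous, which forces the error term to absorb both $F$ and the oscillation of $A$ simultaneously; this merging is possible in principle but you do not explain how the two effects are disentangled.

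The concrete gap: you assert that the error term reduces to "an integral of $|A^{(-1)}(x,Du)-\bar A^{(-1)}(Du)|$, which is precisely what \VMA{} controls in $L^1$-average." That is not what \VMA{} gives. By Definition~\ref{def:meanoscillation}, \VMA{} controls the \emph{normalized} oscillation $\ttheta(x,B_r)=\sup_{\xi\in D(B_r)}|A^{(-1)}(x,\xi)-A^{(-1)}_{B_r}(\xi)|/\big(|A^{(-1)}_{B_r}(\xi)|+\omega(r)|\xi|\big)$, and only on the constrained set $D(B_r)=\{\xi:|A^{(-1)}_{B_r}(\xi)|\le|B_r|^{-1}\}$. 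The comparison argument must therefore be split (as in the proof of Lemma~\ref{lem:comparison}) into the set $E$ where $|A^{(-1)}_{B_r}(Du)|\le|B_r|^{-1}$, and its complement: on $E$ one uses \VMA{} on the normalized quantity (producing $\ttheta(x,B_r)\big(|A^{(-1)}_{B_r}(Du)|+\omega(r)|Du|\big)$, not the raw oscillation), while on $B_r\setminus E$ one exploits the higher integrability of $\phi(x,|Du|)$ from Lemma~\ref{lem:high} to gain a small factor $r^{n\tilde\sigma}$. This decomposition and the role of the normalization are absent from your sketch; calling \SA{} "the structural tool to handle the degenerate-argument regime" does not fill the hole, since \SA{} is used elsewhere (to compare $G^+_{B_r}$ with $G^-_{B_r}$ and to justify the chaining argument). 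Finally, your explanation of the $Du$-dependence of $R_0$ is off: in the paper $R_0$ is pinned down by the global quantity $\int_{\Omega'}\phi(x,|Du|)^{1+\sigma_1}\,dx$ via \eqref{eq:rrestriction2}, not by the local average $\fint_{B_{2r}}|A^{(-1)}(\cdot,Du)|\,dx$.
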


Finally, we have the $C^1$-result under Dini mean continuity assumption, which is the most delicate 
part of the paper and its central contribution. 

\begin{theorem}\label{thm:C1}
Let $A:\Omega \times\R^n\to \R^n$ satisfy the quasi-isotropic $(p,q)$-growth condition, and 
let $u\in W^{1,1}_\loc(\Omega)$ with $|A^{(-1)}(\cdot, Du)|\in L^1_\loc (\Omega)$ be a weak solution to \eqref{maineq}. If $A^{(-1)}$ satisfies \DMA{\gamma} for some $\gamma>2$, then $Du$ is continuous in $\Omega.$
\end{theorem}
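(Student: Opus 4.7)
The plan is to deduce continuity of $Du$ from a Campanato-type excess-decay iteration in which the smallness at each scale is governed by the mean oscillation modulus $\omega$ of $A^{(-1)}$; the hypothesis \DMA{\gamma} is then precisely what makes the iteration summable. As a first step, I would use that \DMA{\gamma} implies both \VMA{} and \SA{}, so Theorem~\ref{thm:CZ} gives $|A^{(-1)}(\cdot,Du)|\in L^s_\loc(\Omega)$ for every $s>1$; combined with a Sobolev/Morrey chain in the generalized Orlicz scale, this upgrades to $\|Du\|_{L^\infty(\Omega'')}\le M$ on every $\Omega''\Subset\Omega$. By quasi-isotropy the nonlinearity is then effectively \emph{uniformly} elliptic on the relevant gradient range, which puts us back in the controlled setting of \cite{HasO22,HasO22b}.

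The core of the argument is a freeze-and-compare step on a ball $B_{2r}\subset\Omega''$: define an autonomous frozen nonlinearity $\bar A_{B_r}$ adapted to the $x$-average of $A^{(-1)}(\cdot,\xi)$ on $B_r$ (chosen so that $\bar A_{B_r}$ itself inherits quasi-isotropic $(p,q)$-growth with uniform constants), and let $v\in u+W^{1,p}_0(B_r)$ solve $\div \bar A_{B_r}(Dv)=0$. The autonomous $C^{1,\alpha}$ theory of \cite{HasO22,HasO22b} yields a decay $\osc_{B_{\tau r}}Dv \le c\tau^\alpha\osc_{B_r}Dv$ for some $\alpha\in(0,1)$ and any $\tau\in(0,\tfrac12]$. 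A comparison bound
\[
\fint_{B_r}|Du-Dv|\,dx \lesssim \Psi(\omega(r))
\]
then follows by testing the difference of the two equations against $u-v$, using the monotonicity of $\xi\mapsto A(x,\xi)$ together with the mean-oscillation hypothesis and the higher integrability of $Du$; the exponent $\gamma>2$ in \DMA{\gamma} is what makes the required H\"older/Young split close at the $L^1$ level.

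Combining the two estimates produces an excess-decay inequality
\[
E(\tau r) \le c\tau^\alpha E(r) + c\,\Psi(\omega(r)),
\qquad E(r):=\fint_{B_r}\bigl|A^{(-1)}(x,Du)-(A^{(-1)}(\cdot,Du))_{B_r}\bigr|\,dx,
\]
which, after fixing $\tau$ with $c\tau^\alpha\le\tfrac12$ and iterating on $r_k=\tau^k r_0$, telescopes into $E(r_k)\le 2^{-k}E(r_0)+c\sum_{j<k}2^{-(k-j)}\Psi(\omega(r_j))$. The series on the right tends to $0$ as $k\to\infty$ precisely thanks to \DMA{\gamma}, so $x\mapsto A^{(-1)}(x,Du(x))$ has vanishing mean oscillation with a Dini-summable modulus and is therefore continuous; invertibility of $\xi\mapsto A^{(-1)}(x,\xi)$, guaranteed by quasi-isotropic $(p,q)$-growth, transfers this continuity back to $Du$.

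The main obstacle is the comparison estimate. Under a mere mean-oscillation hypothesis one cannot bound $|A(x,\xi)-\bar A_{B_r}(\xi)|$ pointwise, so one must convert an integral oscillation of $A^{(-1)}$ into an $L^1$ bound on $Du-Dv$, and the $(p,q)$-structure forces this to pass through Young-type splittings whose dual exponents only close when the integrability from Theorem~\ref{thm:CZ} exceeds a $\gamma$-dependent threshold. Getting the summability of $\sum\Psi(\omega(r_j))$ to be exactly \DMA{\gamma}---rather than a stronger Dini condition---is the heart of the proof and explains both the appearance and the sharpness of the threshold $\gamma>2$.
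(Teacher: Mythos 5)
Your high-level outline is broadly compatible with the paper's strategy (freeze-and-compare, excess decay, Dini summability from \DMA{\gamma}), but two of the steps you treat as routine are exactly where the paper has to work hardest, and as stated they contain genuine gaps.

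\textbf{First gap: $L^\infty$-bound on $Du$.} You claim that Theorem~\ref{thm:CZ} plus ``a Sobolev/Morrey chain in the generalized Orlicz scale'' upgrades $|A^{(-1)}(\cdot,Du)|\in L^s_\loc$ for all $s<\infty$ to $\|Du\|_{L^\infty(\Omega'')}<\infty$. This inference is false: finiteness of $Du$ in every $L^s$ does not yield boundedness (a logarithmic singularity is in every $L^s$), and no embedding theorem gives $L^\infty$ from $W^{1,s}$-membership alone. In the paper, the gradient bound is Proposition~\ref{prop:bounded}, proved \emph{by running the same iteration} (Lemma~\ref{lem:induction}) that later yields continuity; it is not a preliminary consequence of the Calder\'on--Zygmund theorem. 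Without the $L^\infty$ bound you cannot even set up your later excess-decay on a uniformly controlled gradient range, so this step cannot be outsourced.

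\textbf{Second gap: degeneracy of the nonlinear comparison.} Your proposed excess-decay inequality $E(\tau r)\le c\tau^\alpha E(r)+c\,\Psi(\omega(r))$, followed by a naive telescoping, presumes a comparison estimate $\fint_{B_r}|Du-Dv|\lesssim\Psi(\omega(r))$ with $\Psi(\omega)\approx\omega$ at small scales. For $p\ne 2$ this is not what testing the equations and Young splitting gives: the natural energy estimate (the paper's Lemma~\ref{lem:comparison}) controls the weighted quantity $\fint \tfrac{\tphi'(|Du|+|D\tu|)}{|Du|+|D\tu|}|Du-D\tu|^2$ by $\Theta(r)^2$, and converting this to an unweighted $L^1$ bound on $Du-D\tu$ produces only $\Theta(r)^{1/q}$ in general (Lemma~\ref{lem:comparison1}), which destroys Dini summability. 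The sharp bound $\Theta(r)$ in $L^1$ (Lemma~\ref{lem:comparison2}) is only available under the nondegeneracy condition $\inf_{B_\rho}|D\tu|\gtrsim\lambda$. This is precisely why the paper replaces a single telescoping chain by the Kuusi--Mingione-type dichotomy in Lemmas~\ref{lem:E-iteration}--\ref{lem:induction}: at each scale one either has a quantitative lower bound on the gradient (and then the strong comparison applies) or the average of $|Du|$ has already dropped below the target level. Your proposal acknowledges the difficulty of the comparison estimate abstractly but offers no mechanism to resolve the degeneracy, so the iteration as written would not close. (A further technical point you omit: since $\theta_\gamma$ is not assumed monotone, even summing the Dini modulus along a geometric sequence of radii requires the chaining argument of Lemma~\ref{lem:theta-sum}.)
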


\begin{remark}\label{rem:gamma}
The parameter $\gamma$ in the previous theorem can be thought of as the power of the 
mean, as in $\omega_\gamma(r)= \sup_{B_r\subset\Omega} (\fint_{B_r}\fint_{B_r} |M(x)-M(y)|^\gamma\, dx\, dy)^{1/\gamma}$. In the linear case, Dong and Kim \cite{DongKim17} were able to consider the case 
$\gamma=1$, but we require the slightly stronger assumption $\gamma>2$. Thus we included 
an asterisk in Table~\ref{table:summary} to indicate that our general result does not 
quite cover this one paper. In the linear case one has access to additional tools such as a 
weak $L^1$-inequality and a priori estimates whereas we use Calder\'on--Zygmund theory. 
The latter seems to invariably lead to $L^2$-averages and so we believe that our result is 
close to optimal in the non-linear case. See also Proposition~\ref{prop:meanreverse}.
\end{remark}

Let us comment on the Dini mean oscillation version of \aone{}, \DMA{}, and 
the vanishing mean oscillation version of \aone{}, \VMA{}. According to 
Definition~\ref{def:meanoscillation} for $G=A^{(-1)}$ we consider mean averages of 
the quantity
\[
\ttheta(x,B_r) = 
\sup_{\xi\in D(B_r)} \frac{|A(x,\xi) - {A}_{B_r}(\xi)|}
{ |A|_{B_r}(\xi)+\omega(r)}.
\]
The denominator depends on the choice of the ball $B_r$, which make it challenging to find 
the relation between fundamental point-wise \aone{}-type conditions in Definition~\ref{def:continuity} and the mean oscillation type condition and proving properties of the function $\theta_\gamma$ defined in Definition~\ref{def:meanoscillation}. We investigate the mean oscillation type conditions in Section~\ref{sect:DMO}.

The proof of Theorem~\ref{thm:C1} consists of two major steps. The first step is to obtain a comparison estimate in $L^1$-space between the gradients of the weak solutions to the original equation \eqref{maineq} and its approximating autonomous equation in \eqref{eqv} (see Section~\ref{sect:comparison}). Note that the weak solution to \eqref{eqv} has good $C^{1,\alpha}$-regularity estimates by Lemma~\ref{lem:v_regularity}. We emphasize that the comparison estimate in Lemma~\ref{lem:comparison} is sharper than the one we obtained in \cite{HasO22b}. Specially, the exponent of $\Theta$ in Lemma~\ref{lem:comparison} is $2$, whereas the approach used in the proof of \cite[Lemma 6.2]{HasO22b} yields a smaller exponent. 
This sharp estimate allows us to prove the $C^1$-regularity under the \DMA{} condition. 
We also note that the approximation deriving the equation \eqref{eqv} is similar to the one in \cite{HasO22b}, but simpler since we apply the splicing technique only for large values of $|\xi|=t$, rather than for both large and small values. 
The second step is an iteration. We improve upon the iteration argument in \cite{KuuMin14}, leading to Lemma~\ref{lem:induction}, which can be applied for both the Lipschitz regularity and $C^1$-regularity.

The proof of the Calder\'on--Zygmund estimate in Theorem~\ref{thm:CZ} also involves two steps (see Section~\ref{sect:proofCZ}). The first step is, once again, a comparison. Since we are dealing with the non-homogeneous equation \eqref{maineq1}, a new comparison estimate for the gradients of the weak solutions to \eqref{maineq1} and \eqref{eqv} is required, which is provided in Lemma~\ref{lem:approximation}. The next step is to prove the $W^{1,s}$-estimate by estimating integrals of $\phi(x,|Du|)$ over super-level sets. We follow the so-called maximal function free approach, introduced in \cite{AceMin07}. Note that in this process, we must essentially use the \aone{} condition of $A^{(-1)}$.


\section{Preliminaries and notation}\label{sect:preliminaries}

Let $\Omega$ be a bounded domain in $\Rn$ with $n \ge 2$. 
We denote by $B_r(x_0)$ the open ball with center $x_0\in \Rn$ and radius $r>0$. If the center is either clear or irrelevant, we simplify notation to $B_r=B_r(x_0)$. For a set $E\subset \Rn$, $\chi_E$ is the usual \textit{characteristic function} of $E$ such that $\chi_E(x)=1$ if $x\in E$ and $\chi_E(x)=0$ if $x\not\in E$. We denote the H\"older conjugate exponent of $p \in [1,\infty]$ by $p'=\frac{p}{p-1}$. A generic constant denoted by $c>0$ without subscript may vary between appearances.

Let $f, g : E \to \R$ be measurable in $E\subset\R^n$. 
We denote the average of $f$ over $E$ with $0<|E|<\infty$ by 
$(f)_E:=\fint_E f\, dx := \frac{1}{|E|}\int_E f \,dx$. 
The notation $f\lesssim g$ means that
there exists a constant $C>0$ such that $f(y)\le Cg(y)$ all $y\in E$ and 
$f\approx g$ means that $f\lesssim g\lesssim f$.
When $E\subset \R$, we say that $f$ is \textit{almost increasing} on $E$ with constant 
$L\ge 1$ if $f(s)\le L f(t)$ whenever $s,t \in E$ with $s\le t$. If we can choose $L=1$, 
we say that $f$ is \textit{increasing} on $E$. \textit{Almost decreasing} and \textit{decreasing} are defined similarly. 
\textit{Modulus of continuity} refers to a concave and increasing function $\omega:[0,\infty)\to [0,\infty)$ with $\omega(0)=\lim_{r\to 0^+}\omega(r)=0$.

We introduce fundamental conditions on the energy function $\phi:\Omega\times[0,\infty]\to [0,\infty)$. 
We refer to \cite[Chapter~2]{HarH19} for the following definitions and properties.
We start with regularity with respect to the second variable, 
which are supposed to hold for all $x\in \Omega$ and 
a constant $L\ge 1$ independent of $x$. 

\begin{definition} \label{def:ainc}
Let $\phi:\Omega\times[0,\infty]\to [0,\infty)$ and $\gamma\in\R$.
We say that $\phi$ satisfies
\vspace{0.2cm}
\begin{itemize}
\item[\normalfont(aInc)$_\gamma$]\label{ainc} if
$t\mapsto t^{-\gamma}\phi(x,t)$ is almost increasing on $(0,\infty)$ with constant $L$;
\vspace{0.2cm}
\item[\normalfont(Inc)$_\gamma$]\label{inc} if
$t\mapsto t^{-\gamma}\phi(x,t)$ is increasing on $(0,\infty)$;
\vspace{0.2cm}
\item[\normalfont(aDec)$_\gamma$]\label{adec} if 
$t\mapsto t^{-\gamma}\phi(x,t)$ is almost decreasing on $(0,\infty)$ with constant $L$;
\vspace{0.2cm}
\item[\normalfont(Dec)$_\gamma$]\label{dec} if
$t\mapsto t^{-\gamma}\phi(x,t)$ is decreasing on $(0,\infty)$;
\vspace{0.2cm}
\item[\normalfont(A0)] \label{azero} $L^{-1}\leq \phi(x,1)\leq L$.
\end{itemize}
We say that $\phi$ satisfies \ainc{} or \adec{} if it satisfies \ainc{\gamma} or \adec{\gamma}, respectively, for some $\gamma > 1$.

Furthermore, for a vector-valued function $G:\Omega\times \R^M\to \R^N$, we say that $G$ satisfies \ainc{\gamma} or \adec{\gamma} if $\phi(x,t):=|G(x,te)|$ satisfies \ainc{\gamma} or \adec{\gamma}, respectively, with the constant $L$ uniformly in $e\in \R^M$ with $|e|=1$. 
\end{definition}

For $p,q>0$, the conditions \ainc{p} or \adec{q} on $\phi$ with constant $L\geq 1$ are equivalent to 
the following inequalities 
\[
\phi(x,\lambda t)\le L\lambda^p\phi(x,t)
\quad\text{or}\quad
 \phi(x,\Lambda t) \le L \Lambda^q \phi(x,t), \quad \text{respectively},
\]
for all $(x,t)\in \Omega\times [0,\infty)$ and $0\le \lambda\le 1 \le \Lambda$. 
Additionally, \ainc{} or \adec{} 
are equivalent to $\nabla_2$-condition or $\Delta_2$-condition, respectively. 
Although the definition of \azero{} presented above slightly differs from that in \cite{HarH19}, the two definitions coincide when $\phi$ satisfies \adec{}. In the case $\phi(x,\cdot)\in C^1((0,\infty))$, the conditions \inc{p} and \dec{q} for $0<p \le q$ are equivalent to
\[
p \le \frac{t\phi'(x,t)}{\phi(x,t)} \le q \quad \text{for all }\ t\in(0,\infty).
\]

Let us consider increasing functions $\phi,\psi : [0,\infty)\to [0,\infty)$ such that $\phi$ satisfies \ainc{1} and \adec{q} for some $q\ge 1$, and $\psi$ 
satisfies \adec{1}. Then there exist a convex function $\tphi$ and a concave function $\tpsi$ such that $\phi\approx\tilde \phi$ and $\psi\approx \tpsi$ from \cite[Lemma~2.2.1]{HarH19}. 
In turn, Jensen's inequality for $\tphi$ and $\tpsi$ yields that 
\begin{equation*}
\phi\bigg(\fint_\Omega |f|\,dx\bigg) \lesssim \fint_\Omega \phi(|f|)\,dx 
\quad\text{and}\quad 
\fint_\Omega \psi(|f|)\,dx \lesssim \psi\bigg(\fint_\Omega |f|\,dx\bigg)
\end{equation*}
for every $f\in L^1(\Omega)$. Here, the
implicit constants depend on $L$ from \ainc{1} and \adec{q} or \adec{1}, based on the constants arising from the equivalence relation.

\medskip

We define classes of $\Phi$-functions and generalized Orlicz spaces, following \cite{HarH19}. 
Our primary focus is on convex functions relevant to minimization problems and associated PDEs; however, 
the class $\Phiw(\Omega)$ is quite useful for approximating functionals.

\begin{definition}\label{defPhi}
Let $\phi:\Omega\times[0,\infty]\to [0,\infty)$. 
Assume $x\mapsto \phi(x,|f(x)|)$ is measurable for every measurable function $f$ on $\Omega$, 
$t\mapsto \phi(x,t)$ is increasing for every $x\in\Omega$, 
and $\phi(x,0)=\lim_{t\to0^+}\phi(x,t)=0$ and $\lim_{t\to\infty}\phi(x,t)=\infty$
for every $x\in\Omega$. Then $\phi$ is said to be 
\begin{itemize}
\item[(1)] a \textit{$\Phi$-function}, denoted $\phi\in\Phiw(\Omega)$, if it satisfies \ainc{1};
\item[(2)] a \textit{convex $\Phi$-function}, denoted $\phi\in\Phic(\Omega)$, if $t\mapsto \phi(x,t)$ is left-con\-tin\-u\-ous and convex for every $x\in\Omega$.
\end{itemize}
The subsets of $\Phiw(\Omega)$ and $\Phic(\Omega)$ consisting of functions without 
dependence on the first variable (i.e., $\phi(x,t)=\phi(t)$) are denoted by 
$\Phiw$ and $\Phic$, respectively.
\end{definition}

Since convexity implies \inc{1}, we see that $\Phic(\Omega)\subset \Phiw(\Omega)$.
Let us now consider $\phi\in \Phiw(\Omega)$. We define
the (left-continuous) inverse function of $\phi$ with respect to $t$ by
\[
\phi^{-1}(x,t):= \inf\{\tau\geq 0: \phi(x,\tau)\geq t\}.
\]
If $\phi$ is strictly increasing and continuous in $t$, then $\phi^{-1}$ is the usual inverse function.
We also define the conjugate function of $\phi$ by
\[
\phi^*(x,t) :=\sup_{s\geq 0} \, (st-\phi(x,s)).
\] 
From this definition, it follows that \textit{Young's inequality}
\[
ts\leq \phi(x,t)+\phi^*(x, s)
\]
holds for all $ s,t\ge 0$. If $\phi$ satisfies \ainc{p} or \adec{q} for some $p, q>1$, 
then $\phi^*$ satisfies \adec{p'} or \ainc{q'}, respectively \cite[Proposition~2.4.9]{HarH19}.
For simplicity, we write
\[
\phi^+_{B_r}(t):=\sup_{x\in B_r\cap \Omega}\phi(x,t)
\quad \text{and}\quad
\phi^-_{B_r}(t):=\inf_{x\in B_r\cap \Omega}\phi(x,t).
\]

If $\phi\in\Phic(\Omega)$, then $(\phi^*)^*=\phi$ \cite[Theorem~2.2.6]{DieHHR11} and that there exists an increasing and right-continuous function
$\phi':\Omega\times[0,\infty)\to[0,\infty)$, the so-called (right-)derivative of $\phi$, such that 
\[
\phi(x,t)=\int_0^t \phi'(x,s)\, ds.
\]
We recall some results related to this $\phi'$. 

\begin{proposition}[Proposition~3.6, \cite{HasO22}]\label{prop0} 
Let $\gamma>0$ and $\phi\in\Phic(\Omega)$.
\begin{itemize}
\item[(1)] 
If $\phi'$ satisfies \inc{\gamma}, \dec{\gamma}, \ainc{\gamma} or \adec{\gamma}, then $\phi$ satisfies \inc{\gamma+1}, \dec{\gamma+1}, \ainc{\gamma+1} or \adec{\gamma+1}, respectively, with the same constant $L\geq 1$. 
\item[(2)] 
If $\phi$ satisfies \adec{\gamma}, then $(2^{\gamma+1}L)^{-1}t \phi'(x,t) \le \phi(x,t)\le t \phi'(x,t)$.
\item[(3)] 
If $\phi'$ satisfies \azero{} and \adec{\gamma} with constant $L\geq 1$, then $\phi$ also satisfies \azero{}, with constant depending on $L$ and $\gamma$.
\item[(4)] 
$\phi^*(x,\phi'(x,t))\le t\phi'(x,t)$. 
\end{itemize}
\end{proposition}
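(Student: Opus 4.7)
This proposition collects four elementary facts relating $\phi$ to its right derivative $\phi'$, and the unifying tool is the integral representation $\phi(x,t)=\int_0^t\phi'(x,s)\,ds$ together with the convexity of $\phi$ (which forces $\phi'$ to be nondecreasing in $t$).

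For part (1) the key step is the change of variables $s=ut$, which gives
\[
\frac{\phi(x,t)}{t^{\gamma+1}}=\int_0^1 u^{\gamma}\,\frac{\phi'(x,ut)}{(ut)^{\gamma}}\,du.
\]
At each fixed $u\in(0,1]$ the integrand, viewed as a function of $t$, inherits (almost) monotonicity directly from that of $\tau\mapsto\tau^{-\gamma}\phi'(x,\tau)$; passing this under the integral transfers each of \inc{\gamma}, \dec{\gamma}, \ainc{\gamma}, \adec{\gamma} into the corresponding condition on $t\mapsto t^{-\gamma-1}\phi(x,t)$ with the same constant $L$.

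For part (2) the upper bound $\phi(x,t)\le t\phi'(x,t)$ is immediate from convexity and $\phi(x,0)=0$ (the secant slope at $t$ is at most the right derivative at $t$). For the matching lower bound I would combine the doubling estimate $\phi(x,2t)\le 2^{\gamma}L\phi(x,t)$ supplied by \adec{\gamma} with the one-sided estimate $\phi(x,2t)-\phi(x,t)=\int_t^{2t}\phi'(x,s)\,ds\ge t\phi'(x,t)$, the latter again using monotonicity of $\phi'$. Part (3) then follows as a corollary: the upper \azero{} bound $\phi(x,1)\le\phi'(x,1)\le L$ is immediate, and for the lower \azero{} bound I would use $\phi'(x,1)\ge L^{-1}$ together with \adec{\gamma} to produce $\phi'(x,s)\gtrsim s^{\gamma}$ on $(0,1)$, and then integrate to get $\phi(x,1)\gtrsim 1/(\gamma+1)$.

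Part (4) is a direct computation from the definition: expanding $\phi^{*}(x,\phi'(x,t))=\sup_{r\ge 0}(r\phi'(x,t)-\phi(x,r))$, the claim reduces to showing $(r-t)\phi'(x,t)\le\phi(x,r)$ for every $r\ge 0$, which is trivial for $r\le t$ (the left side is nonpositive) and, for $r>t$, follows from monotonicity of $\phi'$ inside $\int_t^r\phi'(x,s)\,ds=\phi(x,r)-\phi(x,t)\le\phi(x,r)$. The only point where constants have to be tracked with any care is part (3), whose lower bound genuinely needs both \azero{} and \adec{\gamma} on $\phi'$ — everywhere else the estimates are classical, and the main conceptual decision is the substitution trick in part (1).
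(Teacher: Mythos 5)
Your proof is correct on all four parts, and the arguments are the natural ones. Since the paper cites this result from \cite{HasO22} rather than proving it, there is no in-paper proof to compare against, but your use of the integral representation $\phi(x,t)=\int_0^t\phi'(x,s)\,ds$, the substitution $s=ut$ in (1), monotonicity of $\phi'$ in (2) and (4), and the two-sided \azero{} estimate in (3) are exactly the intended ingredients. One small remark: in part (2) your chain $t\phi'(x,t)\le\int_t^{2t}\phi'(x,s)\,ds\le\phi(x,2t)\le 2^\gamma L\phi(x,t)$ actually yields the constant $2^{\gamma}L$, which is sharper than the stated $2^{\gamma+1}L$; since the claimed inequality is weaker, this is not a gap but a minor improvement.
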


%



For $\phi\in\Phiw(\Omega)$, the \textit{generalized Orlicz space} (also known as the \textit{Musielak--Orlicz space}) is defined by
\[
L^{\phi}(\Omega):=\big\{f\in L^1_\loc(\Omega):\|f\|_{L^\phi(\Omega)}<\infty\big\},
\] 
with the (Luxemburg) norm 
\[
\|f\|_{L^\phi(\Omega)}:=\inf\bigg\{\lambda >0: \varrho_{L^\phi(\Omega)}\Big(\frac{f}{\lambda}\Big)\leq 1\bigg\},
\quad\text{where}\quad\varrho_{L^\phi(\Omega)}(f):=\int_\Omega\phi(x,|f|)\,dx .
\] 
We denote by $W^{1,\phi}(\Omega)$ the set of functions $f\in W^{1,1}_\loc(\Omega)$ 
with $\|f\|_{W^{1,\phi}(\Omega)}:=\|f\|_{L^\phi(\Omega)}+\big\||Df|\big\|_{L^\phi(\Omega)}<\infty$. 
If $\phi$ satisfies \adec{}, then we note that $f\in L^\phi(\Omega)$ if and only if 
$\varrho_{L^\phi(\Omega)}(f)<\infty$. 
The spaces $L^\phi(\Omega)$ and $W^{1,\phi}(\Omega)$ are reflexive Banach spaces when $\phi$ satisfies \azero{}, \ainc{} and \adec{}.
We denote by $W^{1,\phi}_0(\Omega)$ the closure of 
$C^\infty_0(\Omega)$ in $W^{1,\phi}(\Omega)$. For more information on generalized Orlicz 
and Orlicz--Sobolev spaces, we refer to the monographs 
\cite{CheGSW21, HarH19} and also \cite[Chapter~2]{DieHHR11}.

In recent years, we have studied regularity theory for the general equations \eqref{maineq} with quasi-isotropic $(p,q)$-growth condition \cite{HHL21,HasO22,HasO22b,HasO23,KarLee22}.
In these papers, regularity conditions for the growth function $\phi\in \Phiw(\Omega)$ or a relevant function with respect to the space variable $x$ is given in terms of point-wise oscillation. 
This is in contrast to $(p,q)$-growth approach, where usually the one assumes that 
$\frac qp$ is small, e.g.\ \cite{BelS20}, although see also \cite{CupM_pp}.
Let us recall these assumptions and the regularity results in \cite{HasO23}. 
We have made a slight alteration in that $\omega(r)|\xi|$ previously 
lacked the $|\xi|$; this does not affect which $G$ satisfy the condition, 
but it does impact the functions $\omega$ and $\tomega$, see Example~\ref{eg:Orlicz}.

\begin{definition}\label{def:continuity}
Let $G:\Omega\times \R^M \to \R^N$ with $M,N\in\N$, 
$r\in (0,1]$, and $\omega, \tomega:[0,1]\to [0,L]$ with $L>0 $. Consider the claim 
\[
|G(x,\xi)-G(y,\xi)|\leq \tomega(r)\big(|G(y,\xi)|+ \omega(r)|\xi|\big) 
\quad\text{when }\ |G(y,\xi)|\in [0,|B_r|^{-1}]
\]
for all $x,y\in B_r$, $B_{2r}\subset \Omega$ and $\xi\in\R^M$.
We say that $G$ satisfies
\begin{itemize}
\item[\normalfont(A1)]\label{aone}
if 
the claim holds with 
$\omega=\tomega\equiv L$;
\item[\normalfont(SA1)]\label{SA}
if there exists 
a modulus of continuity 
$\omega$ such that the claim holds with 
$\tomega\equiv L$;
\item[\normalfont(VA1)]\label{VA}
if there exists 
a modulus of continuity 
$\omega=\tomega$ such that the claim holds;
\item[\normalfont(wVA1)]\label{wVA}
if, for every $\epsilon > 0$, $G$ satisfies \VA{} with the range condition replaced by 
$|G(y,\xi)|^{1+\epsilon}\in [0,|B_r|^{-1}]$, 
with moduli of continuity $\omega_\epsilon:=\omega=\tomega$ depending on $\epsilon$, 
but with a common $L$ independent of $\epsilon$. 
\end{itemize}
We also use the definition for $\phi:\Omega\times [0,\infty)\to[0,\infty)$ by interpreting $\phi(x,\xi)=\phi(x,|\xi|)$.
\end{definition}

We refer to Section~8 of \cite{HasO22}
for examples of functions satisfying these conditions. 
With these point-wise assumptions, we proved maximal regularity results in 
\cite{HasO22, HasO22b, HasO23}.

\begin{theorem}
[Theorems~1.2 and 4.1, \cite{HasO22b}]\label{thm:C1alpha}
Let $A$ satisfy the quasi-isotropic (p,q)-growth condition and let $u\in W^{1,1}_{\loc}(\Omega)$ with $|A^{(-1)}(\cdot, Du)|\in L^1_{\loc}(\Omega)$ be a local weak solution to \eqref{maineq}. 
\begin{itemize}
\item[$\mathrm{(1)}$] If $A^{(-1)}$ satisfies \aone{}, then $u\in C^{0,\alpha}_{\loc}(\Omega)$ for some $\alpha\in (0,1)$ depending on $n,p,q$ and $L$. 
\item[$\mathrm{(2)}$] If $A^{(-1)}$ satisfies \wVA{}, then $u\in C^{0,\alpha}_{\loc}(\Omega)$ for every $\alpha\in (0,1)$.
\item[$\mathrm{(3)}$] If $A^{(-1)}$ satisfies \wVA{} with Hölder-continuous $\omega_\epsilon$ for every $\epsilon>0$, then $u\in C^{1,\alpha}_{\loc}(\Omega)$ for some $\alpha\in (0,1)$ depending on $n,p,q$ and $L$.
\end{itemize}
\end{theorem}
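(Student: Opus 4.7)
The plan is to use the frozen-coefficient comparison scheme combined with an excess-decay iteration. Fix $x_0\in\Omega$ and a small ball $B_r(x_0)\Subset\Omega$, and let $v$ be the weak solution of
\[
\operatorname{div} A(x_0,Dv)=0 \quad \text{in } B_r(x_0), \qquad v=u \text{ on } \partial B_r(x_0).
\]
Since $\xi\mapsto A(x_0,\xi)$ has quasi-isotropic $(p,q)$-growth with no $x$-dependence, Lieberman/Uhlenbeck-type interior estimates in the generalized Orlicz framework give $v\in C^{1,\alpha_0}_\loc(B_r)$ together with the excess decay
\[
\fint_{B_\rho}|Dv-(Dv)_{B_\rho}|\,dx \lesssim \Big(\tfrac{\rho}{r}\Big)^{\alpha_0}\fint_{B_{r/2}}|Dv|\,dx
\qquad\text{for } \rho\in(0,r/2],
\]
with $\alpha_0\in(0,1)$ depending only on $n,p,q,L$. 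The bridge to the non-autonomous equation is a quantitative comparison estimate
\[
\fint_{B_r}|A^{(-1)}(x,Du)-A^{(-1)}(x_0,Dv)|\,dx \le c\,\tilde\omega(r)\Big(\fint_{B_{2r}}|A^{(-1)}(x,Du)|\,dx+1\Big),
\]
proved by testing with $u-v$, using the quasi-isotropic monotonicity of $\xi\mapsto A(x_0,\xi)$, Young's inequality with the conjugate $\phi^*$, and the oscillation bound from the \aone{}-type hypothesis to replace $A(x,\xi)$ by $A(x_0,\xi)$ on the right-hand side.

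Combining the two inequalities produces an abstract iteration on the excess
\[
E(x_0,r):=\fint_{B_r}|A^{(-1)}(x,Du)-(A^{(-1)}(x,Du))_{B_r}|\,dx,
\]
namely $E(x_0,\tau r)\le C\tau^{\alpha_0}E(x_0,r)+C\tilde\omega(r)(E(x_0,2r)+1)$ for any $\tau\in(0,\tfrac12)$. For part (1), \aone{} gives only $\tilde\omega\equiv L$, but a Campanato--Morrey iteration with $\tau$ chosen small still yields $E(x_0,r)\lesssim r^\alpha$ for some $\alpha\in(0,\alpha_0)$; via the Campanato embedding adapted to the Orlicz setting and inversion of $A^{(-1)}$ using \azero{} and \adec{}, one obtains $u\in C^{0,\alpha}_\loc(\Omega)$. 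For part (2), \wVA{} forces $\tilde\omega(r)\to 0$, so fixing $\tau$ arbitrarily close to $1$ one can achieve $E(x_0,r)\lesssim r^\beta$ for every preassigned $\beta<1$, hence $u\in C^{0,\alpha}_\loc(\Omega)$ for all $\alpha<1$. For part (3), Hölder continuity of each $\omega_\epsilon$ yields geometric summability of the perturbation term, so the iteration can be run directly on the means $(A^{(-1)}(\cdot,Du))_{B_r}$; one deduces Hölder convergence of these averages at a definite rate, and inverting $A^{(-1)}$ produces a Hölder modulus for $Du$, i.e.\ $u\in C^{1,\alpha}_\loc(\Omega)$.

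The central technical obstacles will be two. First, the comparison estimate must be sharp enough to carry a linear factor $\tilde\omega$ rather than a fractional power $\tilde\omega^\sigma$, so that parts (2) and (3) follow from the vanishing of the modulus alone; this relies on using the quasi-isotropic ellipticity and the duality $\phi\leftrightarrow\phi^*$ to realign the frozen-coefficient inequality with the $A^{(-1)}$-norm appearing in \aone{}. Second, the range restriction $|G(y,\xi)|\le|B_r|^{-1}$ built into \aone{}, \SA{}, \VA{} and \wVA{} must be respected: one splits the argument into a good regime, where this range is automatic, and a bad regime, where a higher-integrability improvement of $|A^{(-1)}(\cdot,Du)|$ obtained via a Gehring-type self-improvement of the Caccioppoli inequality is used to absorb the excess. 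This two-regime device is precisely what justifies, and is enabled by, the $(1+\epsilon)$-slack in the definition of \wVA{}.
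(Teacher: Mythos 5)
This statement is quoted verbatim from \cite{HasO23} and the present paper gives no proof of it, so there is nothing internal to compare against; I will instead judge your sketch against the known argument (which the paper's own Sections 5--7 replicate for its new results). Your overall scheme for parts (2) and (3) --- approximate by an autonomous problem, use Lieberman-type excess decay for the autonomous solution, derive a comparison estimate linear in the modulus, and iterate --- is indeed the right one, and you correctly identify the two main technical points (linearity in $\tilde\omega$ and the range restriction handled via higher integrability, which is exactly what the $(1+\epsilon)$-slack in \wVA{} is for). One caveat: one cannot literally freeze $A(x_0,\cdot)$; because of the $(p,q)$-growth and the range restriction one must average over the ball and splice the nonlinearity to $p$-growth for $|\xi|$ above a threshold $t_0\approx \phi_{B_r}^{-1}(|B_r|^{-1})$ (as in Section 5 of this paper), otherwise the autonomous Dirichlet problem need not be solvable in the correct Orlicz--Sobolev class and the comparison fails in the ``bad regime''.

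The genuine gap is in part (1), and to a lesser extent in your mechanism for part (2). Under \aone{} alone one has $\tilde\omega\equiv L$, so your iteration reads $E(\tau r)\le C\tau^{\alpha_0}E(r)+CL\,(E(2r)+1)$; the inhomogeneous term $CL(E(2r)+1)$ does not decay as $r\to0$, so no choice of $\tau$ makes this close, and you cannot conclude $E(x_0,r)\lesssim r^{\alpha}$. In fact gradient excess decay is simply not available under \aone{}: the correct route to $u\in C^{0,\alpha}_{\loc}$ in part (1) is a Morrey-type decay of the \emph{energy}, $\int_{B_\rho}\phi(x,|Du|)\,dx\lesssim (\rho/r)^{n-\epsilon_0}\int_{B_r}\phi(x,|Du|)\,dx$, obtained from Caccioppoli plus the comparison with the autonomous solution (whose energy decays like $\rho^n$ on the set where its gradient is controlled), followed by the Sobolev--Morrey embedding --- not a Campanato iteration on the gradient excess. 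Likewise in part (2) the conclusion $u\in C^{0,\alpha}$ for \emph{every} $\alpha<1$ comes from pushing the energy-decay exponent to $n-\epsilon$ for every $\epsilon>0$ using the vanishing of $\omega_\epsilon$; your proposed device of ``fixing $\tau$ arbitrarily close to $1$'' in a gradient-excess iteration does not produce arbitrarily large Hölder exponents (the autonomous decay rate $\alpha_0$ caps what the excess iteration can give, and in any case the excess route would prove the stronger $C^{1,\alpha_0}$, which is false under \wVA{} without Hölder moduli). Only part (3), where the gradient excess iteration with a geometrically summable perturbation is the right tool, is argued along the correct lines.
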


\section{Mean oscillation conditions}\label{sect:DMO}

We introduce the mean oscillation variants of the \aone{}-condition, specifically, the 
\textit{vanishing mean-A1 condition} \VMA{} and the \textit{Dini mean-A1 condition} 
\DMA{}.  
For a vector valued function $G:\Omega\times \R^M \to \R^N$ and $U\subset \Omega$, denote
\[
G^+_U(\xi):=\sup_{x\in U} |G(x,\xi)|,
\quad G^-_U(\xi):=\inf_{x\in U} |G(x,\xi)|,
\]
\[
G_{B_r}(\xi) := \fint_{U} G(x,\xi)\,dx
\quad \text{and}\quad
|G|_{B_r}(\xi) := \fint_{U} |G(x,\xi)|\,dx.
\]

\begin{definition}\label{def:meanoscillation}
Let $G:\Omega\times \R^M \to \R^N$ with $M,N\in\N$, $r\in (0,1]$, 
$\omega:[0,1]\to [0,1]$, and 
\[
\ttheta(x,B_r) := 
\sup_{\xi\in D(B_r)} \frac{|G(x,\xi) - {G}_{B_r}(\xi)|}
{ 
|G|_{B_r}(\xi) 
+\omega(r) |\xi|} 
\quad \text{for } \ x\in B_r,
\]
where $D(B_r):= \{\xi\in\R^M:  |G|_{B_r}(\xi) \le |B_r|^{-1}\}$.
For $\gamma\ge1$, we consider
\[
\theta_\gamma(r) := \left(\sup_{B_{2r}\subset \Omega} \fint_{B_r} \ttheta(x,B_r)^\gamma \,dx\right)^\frac{1}{\gamma}.
\]
We say that $G$ satisfies:
\begin{itemize}
\item[\normalfont(VMA1)]\label{VMA}
if there exists a nondecreasing $\omega$ such that 
$\displaystyle
\lim_{r\to 0^+} [ \theta_1(r)+\omega(r) ]=0$.
\item[\normalfont(DMA1)$_\gamma$]\label{DMA}
if there exists a nondecreasing $\omega$ such that 
$\displaystyle
\int_0^1 [\theta_\gamma(r)+\omega(r) ]\frac{dr}{r} <\infty$.
\end{itemize}
\end{definition}

We refer to Section~\ref{sect:examples} for several examples of energies satisfying these 
conditions. There we show that all the cases from Table~\ref{table:summary} are included as special 
cases. By Hölder's inequality, \DMA{\gamma} is monotone in $\gamma$. Furthermore, if the 
case $\gamma=\infty$ is understood as a supremum, then $\theta_\infty$ is closely related 
to the quantity in the point-wise conditions of Definition~\ref{def:continuity}.

\begin{remark}
In Definition~\ref{def:meanoscillation}, we consider the $\gamma$-mean 
of $\ttheta$ for some $\gamma\ge 1$. Proposition~\ref{prop:meanreverse} implies that if we assume that 
$\theta$ is nondecreasing, then it suffices to consider the case $\gamma=1$. 
However, in Proposition~\ref{prop:Lorentz} we see that the current weaker assumption 
is more useful to cover all special cases. 
\end{remark}

\begin{remark}\label{rmk:wDMA}\label{wDMA}\label{wVMA}
Similarly to the vanishing condition \VA{}, we can also consider weak versions (wDMA1) and (wVMA1) 
of \DMA{} and \VMA{}. However, we are not aware of any examples 
where these conditions would be needed. Thus we will for the sake of simplicity not 
consider them in what follows. Furthermore, we could define a point-wise condition (DA1) of 
Dini-type, but this is already obsolete since we directly handle the more general condition 
\DMA{}.
\end{remark}

We show that the \DMA{1} condition also implies the continuity of $G(x,\xi)$ in the $x$ variable and the \SA{} condition. For this we need Spanne's result relating modulus of mean continuity 
with modulus of continuity:

\begin{lemma}[Corollary~1 with its remark, \cite{Spa65}]\label{lem:Spanne}
Assume that $f\in L^1(\Omega)$ satisfies the Dini mean continuity condition
\[
\int_0^1 \bar\omega_f(r)\frac{dr}{r} <\infty
\qquad \text{where}\qquad
\bar\omega_f(r) := \sup_{B_r\subset \Omega} \fint_{B_r} |f(x)-(f)_{B_r}| \,dx.
\]
Then $f$ is continuous with modulus $\omega_f(r)=c\int_0^r \bar\omega_f(s)\,\frac{ds}{s}$ for some $c>0$.
\end{lemma}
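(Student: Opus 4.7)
The plan is to follow the classical Campanato-Spanne strategy: construct a continuous representative of $f$ as the pointwise limit of ball averages, and then compare the averages at two points via a common ambient ball.

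First, I would define $\tilde f(x):=\lim_{r\to 0^+}(f)_{B_r(x)}$ at all interior points where the limit exists. By Lebesgue differentiation, wherever it exists this limit agrees a.e.\ with $f$, so if I can show it exists everywhere in $\Omega$ (with the claimed modulus), I obtain a continuous representative of $f$. The key estimate, valid whenever $B_s(x)\subset\Omega$, is
\[
|(f)_{B_{s/2}(x)} - (f)_{B_s(x)}|
\le \fint_{B_{s/2}(x)}|f-(f)_{B_s(x)}|\,dy
\le 2^n \fint_{B_s(x)}|f-(f)_{B_s(x)}|\,dy
\le 2^n \bar\omega_f(s),
\]
which follows just from doubling volumes and the definition of $\bar\omega_f$. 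Iterating along the dyadic scale $s_k=2^{-k}r$ yields the Cauchy-type bound
\[
|(f)_{B_{s_N}(x)} - (f)_{B_r(x)}|
\le 2^n\sum_{k=0}^{N-1}\bar\omega_f(2^{-k}r)
\le C\int_0^{2r}\bar\omega_f(s)\,\frac{ds}{s},
\]
so the Dini hypothesis forces $\tilde f(x)$ to be well-defined, with the same bound controlling $|\tilde f(x)-(f)_{B_r(x)}|$.

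To derive the modulus of continuity, I would pick $x,y\in\Omega$ with $|x-y|=\delta$ small enough that $B_{2r}(x)\subset\Omega$ where $r:=2\delta$. Since $|x-y|\le r$, the ball $B_{2r}(x)$ contains both $B_r(x)$ and $B_r(y)$, so the same volume-doubling trick gives
\[
|(f)_{B_r(x)}-(f)_{B_{2r}(x)}|
+ |(f)_{B_r(y)}-(f)_{B_{2r}(x)}|
\le c\,\bar\omega_f(2r).
\]
Combining the triangle inequality
\[
|\tilde f(x)-\tilde f(y)|
\le |\tilde f(x)-(f)_{B_r(x)}|+|(f)_{B_r(x)}-(f)_{B_r(y)}|+|(f)_{B_r(y)}-\tilde f(y)|
\]
with the Cauchy bound above and this middle-term estimate, I obtain
\[
|\tilde f(x)-\tilde f(y)|
\le c\int_0^{c|x-y|}\bar\omega_f(s)\,\frac{ds}{s},
\]
which is the desired modulus after renaming constants.

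The one subtle point, which I expect to be the main obstacle, is the passage from the dyadic sum $\sum_k\bar\omega_f(2^{-k}r)$ to the Dini integral $\int_0^{2r}\bar\omega_f(s)\,\tfrac{ds}{s}$. When $\bar\omega_f$ is monotone nondecreasing this is immediate because each term dominates the integral over a neighbouring dyadic annulus divided by $\log 2$. For general $\bar\omega_f$ arising from the stated supremum definition I would replace $\bar\omega_f$ by its monotone majorant $\tilde\omega(r):=\sup_{0<s\le r}\bar\omega_f(s)$; the Dini integrability of $\tilde\omega$, together with the trivial bound $\bar\omega_f\le\tilde\omega$, is what is genuinely invoked, and this is consistent with Spanne's original formulation. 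Once that technical point is handled, the remaining manipulations are elementary.
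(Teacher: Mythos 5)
The paper does not prove this lemma; it is a direct citation to Spanne \cite{Spa65}, so there is no ``paper's own proof'' to compare against. Your outline is the standard Campanato--Spanne argument and the main structure (dyadic telescoping of averages, triangle inequality via a common ambient ball) is correct.

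The genuine gap is in your treatment of the passage from $\sum_k\bar\omega_f(2^{-k}r)$ to $\int_0^{2r}\bar\omega_f(s)\tfrac{ds}{s}$. You propose replacing $\bar\omega_f$ by the monotone majorant $\tilde\omega(r):=\sup_{0<s\le r}\bar\omega_f(s)$ and then assuming Dini integrability of $\tilde\omega$. But this is a strictly stronger hypothesis than the one stated: for an arbitrary nonnegative function, $\int_0^1\bar\omega_f(s)\tfrac{ds}{s}<\infty$ does \emph{not} imply $\int_0^1\tilde\omega(s)\tfrac{ds}{s}<\infty$ (e.g.\ narrow spikes of height $1/k^2$ located at $2^{-k^2}$ give a finite Dini integral for $\bar\omega_f$ but a divergent one for $\tilde\omega$), and moreover the conclusion would then feature the modulus $c\int_0^r\tilde\omega(s)\tfrac{ds}{s}$ rather than the stated $c\int_0^r\bar\omega_f(s)\tfrac{ds}{s}$. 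The correct way to close the gap does not introduce $\tilde\omega$ at all; it exploits that $\bar\omega_f$, being a supremum of mean oscillations over balls, satisfies an almost-increasing property: whenever $B_s\subset B_r\subset\Omega$ one has $\fint_{B_s}|f-(f)_{B_s}|\le 2(r/s)^n\fint_{B_r}|f-(f)_{B_r}|$, hence (at interior scales where the inflation is possible) $\bar\omega_f(s)\le 2(r/s)^n\bar\omega_f(r)$ for $s\le r$. In particular $\bar\omega_f(2^{-k}r)\le 2^{n+1}\bar\omega_f(s)$ for all $s\in(2^{-k}r,2^{-k+1}r)$, which gives
\[
\bar\omega_f(2^{-k}r)\le\frac{2^{n+1}}{\log 2}\int_{2^{-k}r}^{2^{-k+1}r}\bar\omega_f(s)\,\frac{ds}{s},
\]
and summing over $k$ bounds the dyadic sum directly by the Dini integral of $\bar\omega_f$, recovering exactly the stated modulus. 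Note the paper is keenly aware of this non-monotonicity issue elsewhere: Lemma~\ref{lem:theta-sum} is precisely the analogous (harder) estimate for the quantity $\theta_\gamma$, proved there via a chain/covering argument rather than by monotone majorization.
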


From this lemma we can get an intuition of how the point-wise and mean moduli are related. 
First, if $\bar\omega_f(r)\lesssim r^\beta$, then also $\omega_f(r)\lesssim r^\beta$, so there is no 
difference between the cases here. However, if $\bar\omega_f(r)=(\log\frac1r)^{-\alpha}$, $\alpha>1$, then 
$\omega_f(r) \approx (\log\frac1r)^{1-\alpha}$, so there is a loss of a logarithm between 
the cases. In particular, if $\alpha\in (1, 2]$, then the point-wise modulus $\omega_f$ does 
not satisfy the Dini condition even though the mean oscillation modulus $\bar\omega_f$ does.

\begin{proposition}\label{prop:DMA} 
Let $G:\Omega\times \R^M\to \R^N$ with $M,N\in \mathbb{N}$ satisfy \azero{}, \ainc{p} and \adec{q} for some $1\le p\le q$, 
as well as \DMA{1}.
\begin{enumerate}
\item
For each $\xi\in \R^M$, $G(\cdot,\xi)$ is continuous.
\item 
$G$ satisfies \SA{} with the same modulus of continuity $\omega$ and the relevant constant $L\ge 1$ depending on $n,p,q,L,$ and $\theta_1$. 
\end{enumerate}
\end{proposition}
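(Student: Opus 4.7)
The plan is to deduce both claims from the basic mean oscillation estimate obtained by integrating the definition of $\ttheta(\cdot,B_r)$ pointwise: for every $\xi\in D(B_r)$ and $B_{2r}\subset\Omega$,
\[
\fint_{B_r}|G(x,\xi)-G_{B_r}(\xi)|\,dx \le \theta_1(r)\bigl(|G_{B_r}(\xi)|+\omega(r)|\xi|\bigr).
\]

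For (1), I fix $\xi$ and set $f_\xi:=G(\cdot,\xi)$. The hypotheses \azero{}, \ainc{p}, \adec{q} yield the a priori bound $|G(x,\xi)|\le L^2(|\xi|^p+|\xi|^q)=:M_\xi$, so $f_\xi\in L^\infty(\Omega)$. For $r$ small enough that $M_\xi\le|B_r|^{-1}$, the condition $\xi\in D(B_r)$ is automatic and the displayed estimate gives $\bar\omega_{f_\xi}(r)\le(M_\xi+|\xi|)\theta_1(r)$; for larger $r$ we have the trivial bound $\bar\omega_{f_\xi}(r)\le 2M_\xi$. Hence $\int_0^1\bar\omega_{f_\xi}(r)\,\frac{dr}{r}<\infty$ by the Dini summability built into \DMA{1}, and Lemma~\ref{lem:Spanne} upgrades this to pointwise continuity of $G(\cdot,\xi)$.

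For (2), I run a Campanato--Spanne chain of balls centered at $y$. Setting $B^k:=B_{2^{-k}\rho}(y)$ with $\rho$ a fixed fraction of $r$ chosen so that $B_{2\rho}(y)\subset\Omega$, and $\vec G_k:=G_{B^k}(\xi)$, continuity from (1) gives $\vec G_k\to G(y,\xi)$, and whenever $\xi\in D(B^k)$ the basic estimate yields the Gr\"onwall-type recursion
\[
|\vec G_{k+1}-\vec G_k|\le 2^n\theta_1(2^{-k}\rho)\bigl(|\vec G_k|+\omega(2^{-k}\rho)|\xi|\bigr).
\]
Iteration using $\sigma(r):=2^n\sum_{k\ge 0}\theta_1(2^{-k}\rho)<\infty$ gives $|\vec G_k|\lesssim|G(y,\xi)|+|\xi|$ after rearrangement (once $\sigma(r)\le\tfrac12$ for $r$ small). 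Summing the telescoped differences and exploiting the monotonicity of $\omega$ via $\sum_k\theta_1(2^{-k}\rho)\omega(2^{-k}\rho)\le\omega(r)\sigma(r)$ yields
\[
|G(y,\xi)-G_{B_\rho(y)}(\xi)|\le L\bigl(|G(y,\xi)|+\omega(r)|\xi|\bigr),
\]
with $L$ depending only on $n,p,q,L$ and the Dini integral of $\theta_1$. The full \SA{} inequality for $x,y\in B_r$ then follows by sandwiching both $G(x,\xi)$ and $G(y,\xi)$ through a common intermediate average on a ball $B\supset B_\rho(x)\cup B_\rho(y)$, the transition between averages being controlled by a single additional application of the basic estimate on $B$; all accumulated constants are absorbed into $L$.

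The main obstacle is verifying the range condition $\xi\in D(B^k)$ at every scale of the chain, since the crude bound $|\vec G_k|\le M_\xi$ does not suffice uniformly. The resolution is a bootstrap using the \SA{} hypothesis $|G(y,\xi)|\le|B_r|^{-1}$: \ainc{p} together with \azero{} forces $|\xi|\le C|B_r|^{-1/p}\le C|B_r|^{-1}$ for $r\le 1$, so the chain estimate $|\vec G_k|\lesssim|G(y,\xi)|+|\xi|$ is compatible with $|B^k|^{-1}\ge|B_r|^{-1}$ for every $k\ge 0$. Equivalently, one starts the chain at a sufficiently fine scale $k_0$ where continuity from (1) already makes $|\vec G_{k_0}|$ close to $|G(y,\xi)|$, absorbing the finitely many coarser scales into $L$.
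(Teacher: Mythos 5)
Part (1) of your proposal is essentially the paper's argument: control the mean oscillation of $x\mapsto G(x,\xi)$ by $\theta_1$ once $\xi\in D(B_r)$ becomes automatic, then invoke Lemma~\ref{lem:Spanne}. That part is fine.

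Part (2) has a genuine gap. You replace Spanne's lemma by an explicit dyadic chain and, in doing so, you need
\[
\sigma(r):=2^n\sum_{k\ge 0}\theta_1(2^{-k}\rho)<\infty
\quad\text{(and in fact $\sigma(r)\le\tfrac12$ for small $r$).}
\]
But \DMA{1} only supplies $\int_0^1\theta_1(\rho)\,\tfrac{d\rho}{\rho}<\infty$, and $\theta_1$ is \emph{not} assumed monotone (the paper stresses this right before Lemma~\ref{lem:theta-sum}). The Dini integral does not control the dyadic sum $\sum_k\theta_1(2^{-k}\rho)$ without some form of quasi-monotonicity: $\theta_1$ could equal, say, $1$ at every point $2^{-k}\rho$ while the integral stays finite. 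The fact that such a sum can nonetheless be controlled is exactly the content of Lemma~\ref{lem:theta-sum}, but that lemma's proof \emph{uses Proposition~\ref{prop:DMA}(2)}, so invoking it here would be circular. The paper sidesteps the issue entirely: it fixes $B_{2r}\subset\Omega$ with $G^+_{B_r}(\xi)\le|B_r|^{-1}$, observes that then $\xi\in D(B_\rho)$ automatically for every $B_\rho\subset B_r$, obtains $\sup_{B_\rho\subset B_r}\fint_{B_\rho}|G-G_{B_\rho}|\,dx\lesssim (G^+_{B_r}(\xi)+\omega(r)|\xi|)\,\theta_1(\rho)$, and then applies Spanne's lemma, whose conclusion is expressed directly through $\int_0^r\theta_1(\rho)\,\tfrac{d\rho}{\rho}$ and never requires the dyadic sum. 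To repair your proof in the chaining style, you would need the scale-selection device in Spanne's argument (choose in each annulus $(2^{-k-1}\rho,2^{-k}\rho]$ a radius $s_k$ where $\theta_1(s_k)$ is dominated by the local Dini integral); as written, the step is unjustified.

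Two further, smaller discrepancies. First, your bootstrap to verify $\xi\in D(B^k)$ is more delicate than you present: the paper gets this for free by first proving the claim under the \emph{stronger} hypothesis $G^+_{B_r}(\xi)\le|B_r|^{-1}$, where $|G_{B_\rho}(\xi)|\le G^+_{B_r}(\xi)\le|B_\rho|^{-1}$ is automatic, and only afterwards reduces the actual \SA{} hypothesis $G^-_{B_r}(\xi)\le|B_r|^{-1}$ to this stronger one via a careful scaling argument in $\xi$ (the variable $s\in(0,1]$ in the paper), using \azero{}, \ainc{p} and \adec{q}. Your sketch does not contain this reduction, and the last sentence of your proposal gestures at a fix that would still rest on the unjustified sum. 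Second, the paper also treats the range $r\in(r_1,1]$ by a finite chain of balls of radius $r_1$, a step your proposal omits; this is needed because the smallness $\int_0^{r_1}\theta_1\,\tfrac{d\rho}{\rho}\le\tfrac1{2c_1}$ is only available for $r\le r_1$.
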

\begin{proof}
Note that 
$G^+_U(\xi)=\sup_{x\in U}|G(x,\xi)|$ for any $U\subset \Omega$
is finite by \azero{} and \adec{q}.

(1) Fix $\xi\in\R^M\setminus \{0\}$. Then there exists $r_0>0$ such that $G^+_\Omega(\xi) \le |B_{r_0}|^{-1}$. For $r\in (0,r_0]$, from the definition of $\tilde \theta$ in Definition~\ref{def:meanoscillation}, $ |G(x,\xi)-G_{B_r}(\xi)| \le (G^+_\Omega(\xi)+|\xi|) \tilde\theta(x,B_r)$ for every $x\in B_r $ and $B_{2r}\subset\Omega$. Hence
\[\begin{split}
\sup_{B_{2r}\subset\Omega}\fint_{B_r} |G(x,\xi)-G_{B_r}(\xi)|\,dx 
& \le (G^+_\Omega(\xi)+|\xi|) \sup_{B_{2r}\subset\Omega}\fint_{B_r} \tilde\theta(x,B_r)\, dx \\
& = (G^+_\Omega(\xi)+|\xi|) \theta_1(r).
\end{split}
\]
Therefore, by Lemma~\ref{lem:Spanne}, the \DMA{1} condition yields the continuity of $G(\cdot,\xi)$.

(2) Fix $B_{2r}\subset \Omega$ with small $r>0$ to be determined later. 
Consider 
$\xi\in \R^n$ 
with $G^+_{B_r}(\xi)\le |B_{r}|^{-1}$. 
Then for any ball $B_{\rho} \subset B_r$, since 
$
|G|_{B_{\rho}}(\xi)
\le G^+_{B_r}(\xi) \le |B_r|^{-1} \le |B_{\rho}|^{-1}$, it follows from Definition~\ref{def:meanoscillation} that
\[
|G(x,\xi)-G_{B_\rho}(\xi)| 
\le (
|G|_{B_{\rho}}(\xi)
+ \omega(\rho)|\xi|) \tilde\theta(x,B_{\rho}) 
\lesssim (G^+_{B_r}(\xi)+ \omega(r)|\xi|) \tilde\theta(x,B_{\rho})
\]
for any $x\in B_\rho$. Hence by Definition~\ref{def:meanoscillation},
\[
\sup_{B_\rho\subset B_r}\fint_{B_\rho}|G(x,\xi)-G_{B_\rho}(\xi)| \, dx
\lesssim \left(G^+_{B_r}(\xi)+ \omega(r)|\xi|\right) \, \theta_1(\rho).
\]
Therefore, in view of Lemma~\ref{lem:Spanne}, the \DMA{1} condition yields that
\[
|G(x,\xi)-G(z,\xi)| \le c_1 \left(G^+_{B_r}(\xi)
+ \omega(r)|\xi|\right) \int_0^r\theta_1 (\rho)\frac{d\rho}{\rho}, 
\quad \text{for any }\ x, z\in B_r
\]
for some constant $c_1\ge 0$ depending on $p$, $q$, $L$ and $\theta_1$. 
Choose $r\in (0, r_1]$, where $r_1$ is determined by $\int_0^{r_1}\theta (\rho)\frac{d\rho}{\rho} = \frac{1}{2c_1}$. Then 
\[
G^+_{B_r}(\xi) - G^-_{B_r}(\xi) 
\le \sup_{x, z\in B_r} |G(x,\xi)-G(z,\xi)|
\le \tfrac12 (G^+_{B_r}(\xi) + \omega(r)|\xi|),
\]
which implies that
\[
G^+_{B_r}(\xi) \le 2 G^-_{B_r}(\xi) +\omega(r)\,|\xi|.
\]
This implies the desired \SA{}-inequality, when $G^+_{B_r}(\xi)\le |B_{r}|^{-1}$. 

Suppose then that $G^-_{B_r}(\xi)\le |B_{r}|^{-1}$ and let $s\in (0,1]$ be the largest 
number with $G^+_{B_r}(s\xi)\le |B_{r}|^{-1}$. The case $s=1$ was handled above. If 
$s<1$, then $G^+_{B_r}(s\xi)= |B_{r}|^{-1}$ and it follows from the earlier case that 
\[
|B_{r}|^{-1}=G^+_{B_r}(s\xi) \le 2 G^-_{B_r}(s\xi) +\omega(r)|s\xi|.
\]
Since $r\le 1$, it follows from \azero{} that 
$|s\xi|\gtrsim 1$.
Then \azero{} and $\omega(r)\le 1$ 
yield $\omega(r)|s\xi|\lesssim G^-_{B_r}(s\xi)$. Using also \adec{q}, we find that 
\[
|B_r|^{-1} \le 2 G^-_{B_r}(s\xi) +\omega(r)|s\xi|\le cG^-_{B_r}(s\xi)\le c_2 s^p G^-_{B_r}(\xi)
\le c_2s^p |B_r|^{-1}.
\]
Hence $s\ge c_2^{-1/p}$.  Returning to the earlier case, we then conclude that 
\[
G^+_{B_r}(\xi)
\le
Ls^{-q}G^+_{B_r}(s\xi) 
\le 
Ls^{-q} (2 G^-_{B_r}(s\xi) +\omega(r)|s\xi|)
\le 
2L c_2^{q/p} (G^-_{B_r}(\xi) +\omega(r)|\xi|),
\]
which implies that \SA{}-inequality with the correct assumption $G^-_{B_r}(\xi)\le |B_{r}|^{-1}$
for $r\in (0, r_1]$. When $r\in (r_1,1]$, we obtain the conclusion using a chain of
a fixed number $\lceil \frac1{r_1}\rceil$ of balls of radius $r_1$ and the above argument. 
\end{proof}

The function $\theta$ in the above definition is not assumed to be increasing. This makes it 
harder to estimate geometric series based on $\theta$, but the next lemma shows that it is possible. 

\begin{lemma}\label{lem:theta-sum}
Let $G:\Omega\times \R^M \to \R^N$ satisfy \azero{}, \ainc{p} and \adec{q} for some $1\le p\le q$ and \DMA{\gamma} for some $\gamma\ge 1$.
There exists $\delta_1=\delta_1(n,p,q,L,\theta_\gamma)\in(0,1)$ such that
for every $\delta\in (0,\delta_1)$ and $r\in (0,1)$ with $|B_{2r}|\le 1$,
\[
\sum_{k=0}^\infty \theta_\gamma(\delta^kr) 
\le
\frac {c}{
\delta^{\frac{4n+1}{\gamma}}
\log(\delta^{-1})}\int_0^r \theta_\gamma(\rho) \frac{d\rho}{\rho}
\]
for some 
$c=c(n,\gamma)>0$.
\end{lemma}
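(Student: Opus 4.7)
The plan is to deduce the desired inequality from a reverse-doubling-type comparability of the form
\[
\theta_\gamma(t) \le C\,\delta^{-2n/\gamma-1}\,\theta_\gamma(s),
\]
valid for every small $t$, every $s\in[\delta t,t/2]$ and suitable $\delta\in(0,\delta_1)$, with $C=C(n,p,q,\gamma,\theta_\gamma)$. Granting this estimate, I would integrate against $ds/s$ over $s\in[\delta t,t/2]$, obtaining
\[
\theta_\gamma(t)\log(\delta^{-1}) \lesssim \delta^{-2n/\gamma-1}\int_{\delta t}^{t/2}\theta_\gamma(s)\,\frac{ds}{s}
\]
for $\delta$ small (so that $\log\tfrac{1}{2\delta}\ge \tfrac12\log\delta^{-1}$), then specialize to $t=\delta^kr$ and sum over $k\ge0$. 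Since the intervals $[\delta^{k+1}r,\delta^kr/2]$ are pairwise disjoint for $\delta<\tfrac12$ and are all contained in $(0,r]$, the telescoping yields the asserted bound.

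For the comparability itself, I would fix $B_{2t}(x_0)\subset\Omega$ and $\xi\in D(B_t(x_0))$, and cover $B_t(x_0)$ by $N\lesssim\delta^{-n}$ balls $B_s(y_i)$ with $y_i\in B_{t/2}(x_0)$, so that each $B_{2s}(y_i)\subset B_{2t}(x_0)\subset\Omega$. On each piece I would use the triangle inequality
\[
|G(x,\xi)-G_{B_t(x_0)}(\xi)| \le |G(x,\xi)-G_{B_s(y_i)}(\xi)| + |G_{B_s(y_i)}(\xi)-G_{B_t(x_0)}(\xi)|.
\]
The $\gamma$-mean of the first term on $B_s(y_i)$ is controlled by $(|G_{B_s(y_i)}(\xi)|+\omega(s)|\xi|)\,\theta_\gamma(s)$ once I check that $\xi\in D(B_s(y_i))$, which follows (possibly after a universal rescaling of $\xi$) from \SA{} together with \azero{}, \ainc{p} and \adec{q}. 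The second term is handled by Jensen's inequality and averaging in $y_i$, followed by Fubini. Summing over $i$, taking $\gamma$-th roots, and using the volume ratio $|B_t|/|B_s|=\delta^{-n}$ produce two factors of $\delta^{-n/\gamma}$: one from the covering and a second from rewriting the denominator $|G_{B_t(x_0)}(\xi)|+\omega(t)|\xi|$ in terms of its scale-$s$ analogue via \SA{}. An additional $\delta^{-1}$ arises from the rescaling of $\xi$ needed for the admissibility change, giving the final exponent $\delta^{-2n/\gamma-1}$.

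The hard part will be the comparability estimate itself. The difficulty is that $\ttheta(\cdot,B_r)$ depends on $B_r$ in three different ways simultaneously: through the average $G_{B_r}(\xi)$ in the numerator, through the normalization $|G_{B_r}(\xi)|+\omega(r)|\xi|$ in the denominator, and through the admissible set $D(B_r)$. A scale change from $t$ to $s$ perturbs all three at once, and the crucial reduction $\xi\in D(B_t(x_0))\Rightarrow\xi\in D(B_s(y_i))$ (up to a bounded rescaling) relies on \SA{}, which is available because \DMA{\gamma} implies \DMA{1} for $\gamma\ge1$ by Hölder's inequality, so that Proposition~\ref{prop:DMA} applies. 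Once the admissibility bookkeeping is controlled, the integration and geometric summation steps are routine and deliver the sharp exponent $\delta^{-2n/\gamma-1}$ appearing in the statement; the threshold $\delta_1$ is then chosen so that all implicit constants and disjointness conditions above are simultaneously met.
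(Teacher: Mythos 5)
Your overall plan---establish a comparability estimate $\theta_\gamma(t)\lesssim\delta^{-2n/\gamma-1}\theta_\gamma(s)$ for $s$ in an intermediate range, integrate against $ds/s$, and sum a disjoint geometric family---matches the paper's strategy and is the right framework. The gap is in your proof of the comparability.

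Your one-step triangle inequality
\[
|G(x,\xi)-G_{B_t(x_0)}(\xi)| \le |G(x,\xi)-G_{B_s(y_i)}(\xi)| + |G_{B_s(y_i)}(\xi)-G_{B_t(x_0)}(\xi)|
\]
produces a second term that cannot be controlled by $\theta_\gamma(s)$: it compares a small-scale average to the large-scale average $G_{B_t}(\xi)$. Any attempt to bound it by ``Jensen plus averaging in $y_i$'' is circular: Jensen gives $|G_{B_s(y_i)}(\xi)-G_{B_t}(\xi)|^\gamma\le\fint_{B_s(y_i)}|G(z,\xi)-G_{B_t}(\xi)|^\gamma\,dz$, and summing/averaging over $i$ (or $y_i$) then reproduces (up to bounded-overlap constants) the quantity $\fint_{B_t}\ttheta(z,B_t)^\gamma\,dz$ that you started with---so the estimate cannot be closed without a factor $<1$ for absorption, which is not available. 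The paper sidesteps this precisely by never pitting a small-scale average against the large-scale one in a single step: it rewrites $\fint_{B_{\tilde r}}\ttheta(x,B_{\tilde r})^\gamma\,dx$ as a double integral $\fint_{B_{\tilde r}}\fint_{B_{\tilde r}}\sup_\xi|H_\xi(x)-H_\xi(y)|^\gamma$ and then telescopes $H_\xi(x)-H_\xi(y)$ through a \emph{chain} of $m\lesssim\delta^{-1}$ overlapping balls of radius $\rho/2$ along the segment $[x,y]$, so that each link compares averages over two overlapping small balls and is controlled by the $\rho$-scale oscillation $\theta_\gamma(\rho)$. That chain of length $\approx\delta^{-1}$, after Hölder, is exactly what produces the factor $\delta^{-1}$ (i.e.\ $\delta^{-\gamma}$ before taking $\gamma$th roots); your attribution of the $\delta^{-1}$ to a ``rescaling of $\xi$'' for admissibility is not right---the paper's argument needs \emph{no} rescaling of $\xi$, since \SA{} directly yields $D(B_{\tilde r})\subset D(B_\rho(z))$ for $\delta$ small.

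Two smaller remarks. First, your admissibility reasoning and the use of Proposition~\ref{prop:DMA} to obtain \SA{} from \DMA{1} are correct and are indeed what the paper uses. Second, your integration step takes $s\in[\delta t,t/2]$; the paper uses the subinterval $s\in(\delta\tilde r,\sqrt{\delta}\,\tilde r)$, which is needed because the covering requires $\rho\gtrsim\delta\tilde r$ while the telescoping sum over $k$ requires disjointness of the integration intervals---your intervals $[\delta^{k+1}r,\delta^kr/2]$ are also disjoint for $\delta<1/2$, so that part of your bookkeeping is fine and only a cosmetic difference.
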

\begin{proof}
Let $\tilde r:=\delta^kr$ with $k\ge 1$ and 
$\rho\in (\delta \tilde r,\sqrt{\delta} \tilde r)$.
Fix $B_{2r}\subset\Omega$. Since $G$ 
satisfies
\SA{} by Proposition~\ref{prop:DMA}(2), 
\[
G^+_{B_{3\tilde r/2}}(\xi) \le \tilde L \big( G^-_{B_{3\tilde r/2}}(\xi)+ \omega(\tilde r) | \xi|\big)
\] 
for any 
$\xi\in\R^M$ with $G^-_{B_{3\tilde r/2}}(\xi) \le |B_{\tilde r}|^{-1}$, and for some $\tilde L\ge 1$ depending on $n$, $p$, $q$ and $\theta_1$. 
Fix $\xi\in \R^M$ such that 
$
| G|_{B_{\tilde r}}(\xi)\le |B_{\tilde r}|^{-1}$. 
Then the preceding inequality yields for any $B_\rho(z)\subset B_{3 \tilde r/2}$ 
with $z\in \overline{B_{\tilde r}}$ that
\begin{equation}\label{Lem:theta-sum_pf1}
|G|_{B_\rho(z)}(\xi) \le \tilde L \big(|G|_{B_{\tilde r}}(\xi)+\omega(\tilde r)|\xi|\big)
\end{equation}
Therefore, since $\omega(r)\le 1$,
\[
|G|_{B_\rho(z)}(\xi)
\le \tilde{L} \left( |G_{B_{\tilde r}}(\xi)| + |\xi|\right) \le \tilde L(L +1) |B_{\tilde r}|^{-1} 
\le |B_\rho|^{-1} 
\]
provided that 
$\rho \le  \min\{[\tilde L(L + 1)]^{-1/n},\frac12\}r$. 
Thus $D(B_{\tilde r})\subset D(B_\rho(z))$ for $z\in \overline{B_{\tilde r}}$.

Denote $H_{\xi}(x):=\frac{G(x,\xi)}{|G|_{B_{\tilde r}}(\xi)+\omega(r) |\xi|}$. 
We connect the integrals of $H_\xi$ in the two balls by a chain argument.
We cover $B_{\tilde r}$ with $c(n) \delta^{-n}$ balls $B^i$ of radius $\frac{\rho}{2}$, whose centers are in $\overline{B_{\tilde r}}$, with the property that 
every $x,y\in B_{\tilde r}$ with $|x-y|\le \frac \rho4$ belong to some ball in the cover. 
For $x\in B^i$ and $y\in B^j$ with $i\neq j$ and $|x-y|> \frac{\rho}{4}$ we use the estimate
\[\begin{split}
|H_\xi(x)-H_\xi(y)| & \le
|H_\xi(x)- (H_\xi)_{B_{\rho/2}(x_0)}|
+\sum_{l=0}^m|(H_\xi)_{B_{\rho/2}(x_{l-1})}-(H_\xi)_{B_{\rho/2}(x_l)}| \\
&\qquad + |(H_\xi)_{B_{\rho/2}(x_{m})}-H(y)|
\end{split}\]
where $x_0=x$, $x_m=y$ and $x_l\in [x, y]:=\{tx+(1-t)y:t\in[0,1]\}$ for $l=1,\dots,m-1$ satisfy $\frac{\rho}{4}< |x_l-x_{l-1}|\le \frac{\rho}{2}$ for $l=1,2,\dots,m$. Note that 
$m\le 8 \delta^{-1}$. Using $D(B_{\tilde r})\subset D(B_\rho(x_l))$ and Hölder's 
inequality in the second step, we have
\[\begin{split}
&\fint_{B^i}\fint_{B^j} \sup_{\xi\in D(B_{\tilde r})} |H_\xi(x)-H_\xi(y)|^\gamma\, dy\, dx \\
&\le \fint_{B^i} \sup_{\xi\in D(B_{\tilde r})} |H_\xi(x)- (H_\xi)_{B_{\rho/2}(x_0)}|^\gamma\, dx 
+ \fint_{B^j} \sup_{\xi\in D(B_{\tilde r})} |(H_\xi)_{B_{\rho/2}(x_{m})}-H_\xi(y)|^\gamma\, dy \\
&\qquad +\sum_{l=1}^{m} \sup_{\xi\in D(B_{\tilde r})} |(H_\xi)_{B_{\rho/2}(x_{l-1})}-(H_\xi)_{B_{\rho/2}(x_l)}|^\gamma \\
&\lesssim 
\sum_{l=0}^{m} \fint_{B_{\rho}(x_l)} \fint_{B_{\rho}(x_l)}\sup_{\xi \in D(B_{\rho}(x_l))} |H_\xi(x)- H_\xi (y)|^\gamma \, dy \, dx.
\end{split}
\]

We can connect integals over $\ttheta$ and $H_\xi$. We estimate 
\[
\fint_{B_{\tilde r}} \ttheta(x,B_{\tilde r})^\gamma\, dx
\approx
\fint_{B_{\tilde r}}\fint_{B_{\tilde r}} \sup_{\xi \in D(B_{\tilde r})} |H_\xi(x)-H_\xi (y)|^\gamma\, dy\, dx
\]
and, using also \eqref{Lem:theta-sum_pf1}, we see that 
\[
\fint_{B_{\rho}(x_l)}\fint_{B_{\rho}(x_l)} \sup_{\xi \in D(B_{\rho}(x_l))} |H_\xi(x)-H_\xi (y)|^\gamma\, dy\, dx
\lesssim 
\fint_{B_{\rho}(x_l)} \ttheta(x,B_\rho(x_l))^\gamma\, dx 
\le \theta_\gamma(\rho)^\gamma.
\]
Combining these with the estimate from the previous paragraph and $m\lesssim \frac1\delta$, 
we find that 
\[\begin{split}
\fint_{B_{\tilde r}} \ttheta(x,B_{\tilde r})^\gamma\, dx
&\lesssim \delta^{-2n} \sum_{i,j} \fint_{B^i}\fint_{B^j} \sup_{\xi\in D(B_{\tilde r})} |H_\xi(x)-H_\xi(y)|^\gamma\, dy\, dx  \\
&\lesssim \delta^{-2n} \sum_{i,j}  \delta^{-1}\theta_\gamma(\rho)^\gamma \lesssim \delta^{-4n-1} \theta_\gamma(\rho)^\gamma.
\end{split}\]
We have thus shown that $\theta_\gamma(\tilde r) \lesssim 
\theta_\gamma(\rho)$
for $\rho\in (\delta \tilde r, \sqrt{\delta} \tilde r)$ with $\tilde r=\delta^k r$ and $\delta \le \min\{[\tilde L(L + 1)]^{-1/n},1/2\}^2=: \delta_1$. Thus we calculate 
\[
\int_{\delta^{k+1}r}^{\delta^kr} \theta_\gamma(\rho) \frac{d\rho}{\rho}
\gtrsim 
\delta^{\frac{4n+1}{\gamma}}
 \theta_\gamma(\delta^{k}r) \int_{\delta^{k+1}r}^{\sqrt{\delta}\delta^kr} \frac{d\rho}{\rho}
=\frac{\delta^{\frac{4n+1}{\gamma}}}{2} \log(\delta^{-1}) \theta_\gamma(\delta^{k}r).
\] 
Adding the inequalities over $k$ yields the claim. 
\end{proof}

We derive some properties of  the supremal counterpart $\theta^*_\gamma$ of $\theta_\gamma$.

\begin{proposition}\label{prop:meanreverse}
Let $G: \Omega\times \R^M\to \R^N$ satisfy \azero{}, \ainc{p} and \adec{q} for some $1\le p\le q$
as well as \SA{}. With $\theta_\gamma$ from Definition~\ref{def:meanoscillation} for non-decreasing 
$\omega$ and $\gamma\ge 1$ we define 
$\theta^*_\gamma(r):=\sup_{\rho\in (0,r]}\theta_\gamma(\rho)$. Then $\theta^*_\gamma \le L_\gamma \theta^*_1$ for some $L_\gamma>0$ depending on $n$, $p$, $q$ and $\gamma$.
We also note two consequences of this.
\begin{enumerate}
\item 
If $\displaystyle\int_0^1 \theta^*_1(\rho)\frac{d\rho}\rho<\infty$, then \DMA{\gamma} holds for any $\gamma\ge 1$.
\item 
If $G$ satisfies \SA{} and \VMA{}, then $\displaystyle\lim_{r\to 0} \theta^*_\gamma(r)=0$ for any $\gamma\ge 1$. 
\end{enumerate}
\end{proposition}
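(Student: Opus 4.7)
The strategy is to establish a pointwise $L^\infty$ bound on $\ttheta$ from the \SA{} hypothesis, and then upgrade it to an $L^\gamma$ bound via Hölder and a John--Nirenberg argument. For the pointwise bound, fix $B_{2r}\subset\Omega$ and $\xi\in D(B_r)$. The defining inequality of $\theta_1$ combined with the triangle inequality gives
\[
\fint_{B_r}|G(y,\xi)|\,dy \le (1+\theta^*_1(r))(|G_{B_r}(\xi)|+\omega(r)|\xi|),
\]
whence $G^-_{B_r}(\xi)\le(1+\theta^*_1(r))(|G_{B_r}(\xi)|+\omega(r)|\xi|)$. Combining this with the scale-free form of \SA{}, $G^+_{B_r}(\xi)\le C_L(G^-_{B_r}(\xi)+\omega(r)|\xi|)$, obtained by the scaling step in the proof of Proposition~\ref{prop:DMA}(2), yields
\[
\ttheta(x,B_r)\le \frac{G^+_{B_r}(\xi)+|G_{B_r}(\xi)|}{|G_{B_r}(\xi)|+\omega(r)|\xi|}\le C(n,p,q,L)\,(1+\theta^*_1(r)).
\]

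Hölder's inequality and the pointwise bound then give
\[
\theta_\gamma(r)^\gamma \le \|\ttheta(\cdot,B_r)\|_\infty^{\gamma-1}\,\theta_1(r) \le C\,(1+\theta^*_1(r))^{\gamma-1}\theta^*_1(r),
\]
which, after supremising over $\rho\le r$, immediately yields $\theta^*_\gamma(r)\le L_\gamma\theta^*_1(r)$ in the regime $\theta^*_1(r)\ge 1$. For the complementary regime $\theta^*_1(r)<1$ I invoke John--Nirenberg applied to the renormalized functions $\Psi_\xi(x):=(G(x,\xi)-G_{B_r}(\xi))/(|G_{B_r}(\xi)|+\omega(r)|\xi|)$. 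Using the first-step bound at every sub-scale $\rho\le r$ together with $|G_B(\xi)|\lesssim|G_{B_r}(\xi)|+\omega(r)|\xi|$ and the monotonicity $\omega(\rho)\le\omega(r)$, one shows $\fint_B|\Psi_\xi-(\Psi_\xi)_B|\,dx\lesssim\theta^*_1(r)$ for every ball $B\subset B_r$ with $\xi\in D(B)$, so $\Psi_\xi$ is BMO on $B_r$ with norm $\lesssim\theta^*_1(r)$ uniformly in $\xi$. John--Nirenberg gives $(\fint_{B_r}|\Psi_\xi|^\gamma\,dx)^{1/\gamma}\lesssim\theta^*_1(r)$ pointwise in $\xi$, and the supremum in $\xi$ is passed inside the integral by a finite discretization of $D(B_r)$ (bounded in $\R^M$ by \azero{}, \ainc{p}, \adec{q}) combined with the $C^1$ equicontinuity of $G$ in $\xi$. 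The consequences (1) and (2) then follow in one line each from $\theta^*_\gamma\le L_\gamma\theta^*_1$ combined, respectively, with the Dini assumption on $\omega$ and with $\theta^*_1(r)\to 0$ under \VMA{}.

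\emph{Main obstacle.} The delicate point is the final step: transferring a pointwise-in-$\xi$ BMO estimate on the family $\{\Psi_\xi\}_\xi$ into an $L^\gamma$ bound on the $\xi$-supremum $\ttheta$. Naively, the supremum over an uncountable family of BMO functions need not lie in $L^\gamma$; here, the effective compactness of $D(B_r)$ provided by \azero{}, \ainc{p}, and \adec{q} together with the $C^1$ regularity of $G$ in $\xi$ permits a finite discretization whose cardinality is absorbed into the exponential tails of John--Nirenberg, producing a constant $L_\gamma$ depending only on $n$, $p$, $q$, $\gamma$, and $L$.
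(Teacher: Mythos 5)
The first two steps of your sketch (the pointwise bound $\ttheta(x,B_r)\lesssim 1+\theta^*_1(r)$ from \SA{}, and the Hölder reduction in the regime $\theta^*_1(r)\ge 1$) are sound in spirit and are in fact consistent with what the paper's argument implicitly uses. However, your paragraph titled ``Main obstacle'' identifies precisely the place where the proposal breaks down, and the fix you offer does not work. There are two independent problems.

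First, Proposition~\ref{prop:meanreverse} assumes only that $G$ satisfies \azero{}, \ainc{p}, \adec{q} and \SA{}; it does \emph{not} assume $G(x,\cdot)\in C^1$, nor any modulus of continuity of $G$ in $\xi$, so the ``$C^1$ equicontinuity'' you invoke is not available. Second, and more seriously, even if some form of $\xi$-equicontinuity were granted, the discretization strategy cannot produce a constant $L_\gamma$ depending only on $n,p,q,\gamma$. The set $D(B_r)$ has diameter that blows up as $r\to 0$ (by \azero{} and \ainc{p}, $D(B_r)\subset\{|\xi|\lesssim |B_r|^{-1/p}\}$), so the cardinality $N=N(r)$ of any reasonable $\xi$-net grows without bound. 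Passing the supremum through the John--Nirenberg tail via a union bound gives at best
\[
\big|\{x\in B_r: \sup_i|\Psi_{\xi_i}(x)|>t\}\big| \le N\,e^{-ct/\theta^*_1(r)}\,|B_r|,
\]
and inserting this into the layer-cake formula yields $\theta_\gamma(r)\lesssim \theta^*_1(r)\log N(r)$, with $\log N(r)\to\infty$. The constant therefore cannot be uniform in $r$, which is exactly what the proposition claims.

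The paper's proof avoids this entirely by a structurally different route: instead of estimating each $\Psi_\xi$ in BMO and then trying to pass to the supremum, it runs the good-$\lambda$/stopping-time machinery from the John--Nirenberg proof \emph{directly on the scalar function} $x\mapsto\ttheta(x,B_r)$. The key technical points the paper needs, and which your sketch is missing, are (i) a Vitali-covering-based stopping-time decomposition of $B_r$ into nested balls $B^k_j$ (the Calder\'on--Zygmund cube decomposition is replaced since the supremum is over balls), (ii) a chaining estimate of the form
\[
|G(x,\xi)-G_{B_r}(\xi)| \le |G(x,\xi)-G_{B^k}(\xi)| + \sum_{l=1}^{k}\Big(|G_{B^l\cap B^{l-1}}(\xi)-G_{B^l}(\xi)|+|G_{B^l\cap B^{l-1}}(\xi)-G_{B^{l-1}}(\xi)|\Big),
\]
valid \emph{simultaneously} for all $\xi\in D(B_r)$, and (iii) the \SA{} condition to ensure $D(B_r)\subset D(B_\rho(y))$ and that the denominators $|G_{B_r}(\xi)|+\omega(r)|\xi|$ and $|G_{B_\rho(y)}(\xi)|+\omega(\rho)|\xi|$ are comparable across scales. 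Dividing the chain inequality by the denominator and taking $\sup_\xi$ produces a bound on $\ttheta(x,B_r)$ by $\ttheta(x,B^k)$ plus $k$ times $\theta^*$, which is the engine of the exponential decay for $|\{\ttheta(\cdot,B_r)>t\}|$. Because the argument never separates over $\xi$, the constant is independent of $r$. If you want to salvage a proof along your lines you would need to replace the discretization step with a uniform-in-$\xi$ chaining argument of this type; as written, the proposal has a genuine gap.
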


\begin{proof}
We follow ideas from the proof of the John--Nirenberg inequality. 
In Definition~\ref{def:meanoscillation}, the supremum is taken over balls, so we apply the 
Vitali covering lemma instead of the Calder\'on--Zygmund decomposition. 
Fix $r\in (0,1]$ and abbreviate $\theta^*:=\theta^*_1(r)$.
It follows from the definition of $\theta^*$ that
\begin{equation}\label{lem:meanreverse_pf1}
 \fint_{B_\rho(y)} \ttheta(x, B_\rho(y)) \,dx \le \theta^* \quad \text{for all }\ B_{2\rho}(y) \subset \Omega \ \text{ with }\ \rho\in(0,r].
\end{equation}
Let $\delta\in(0,\frac1{10})$ be a small constant to be determined later, and $\beta_n>1$ be a constant depending only $n$ such that $|B_\rho| \le \beta_n |B_r\cap B_\rho (y)|$ for any $y\in B_r$ and $\rho\in(0, 2r]$.
Denote $E(t, U) := \{x\in U \cap B_r : \ttheta(x, U) > t \}$.
 
We first estimate the measure of the set $E(\beta_n\delta^{-n}\theta^*,B_r)$.
Observe that for every $y\in B_r$ and $\rho\in (\delta r, r]$,
\[
\fint_{B_r\cap B_\rho(y)} \ttheta (x,B_r)\,dx < \beta_n\delta^{-n}\fint_{B_r} \ttheta (x,B_r)\,dx \le \beta_n\delta^{-n}\theta^* ,
\]
and that for almost every $y\in E(\beta_n\delta^{-n}\theta^*,B_r)$
\[
\lim_{\rho\to 0^+} \fint_{B_r\cap B_\rho(y)} \ttheta (x,B_r)\,dx > \beta_n\delta^{-n}\theta^*.
\]
Then by the continuity of the integral with respect to $\rho$, for almost every $y\in E(\beta_n\delta^{-n}\theta^*,B_r)$ one can find $\rho_y\in (0, \delta r]$ such that 
\[
 \fint_{B_r\cap B_{\rho_y}(y)} \ttheta (x,B_r)\,dx = \beta_n\delta^{-n} \theta^*
 \quad \text{and}\quad
 \fint_{B_r\cap B_\rho(y)} \ttheta (x,B_r)\,dx < \beta_n\delta^{-n} \theta^* \quad \forall \rho\in (\rho_y,r].
\]
Therefore, by the Vitali covering lemma, there exist $y^1_j \in E(\beta_n\delta^{-n}\theta^*, B_r)$ 
and $\rho^1_j\in (0,\delta r]$ for $j=1, 2, \ldots$ such that the balls 
$B^1_j:=B_{\rho^1_j}(y^1_j)$ are mutually disjoint,
\[
\bigcup_{j=1}^\infty 5 B^1_j \supset E(\beta_n\delta^{-n}\theta^*, B_r) \setminus \mathcal N^0, 
\]
where $5B^1_j:= B_{5 \rho^1_j}(y^1_j)$ and $\mathcal N^0$ is some measure zero set,
\[
 \fint_{B_r\cap B^1_j} \ttheta (x,B_r)\,dx = \beta_n \delta^{-n} \theta^*
 \quad\text{and}\quad 
 \fint_{B_r\cap 5 B^1_j} \ttheta (x,B_r)\,dx < \beta_n \delta^{-n} \theta^*;
\]
the earlier setup is applicable, since $5\rho^1_j\le 5\delta r \le r$.
The equation in above together with \eqref{lem:meanreverse_pf1} implies
\[
\sum_{j=1}^\infty | B^1_j\cap B_r| = \frac{\delta^n}{\beta_n\theta^*} \sum_{j=1}^\infty \int_{B_r\cap B^1_j} \ttheta (x,B_r)\,dx \le \frac{\delta^n}{\beta_n\theta^*} \int_{B_r} \ttheta (x,B_r)\,dx 
\le  \frac{\delta^n}{\beta_n} |B_r|,
\]
so that 
\[
|E(\beta_n \delta^{-n}\theta^*,B_r)| 
\le \sum_{j=1}^\infty |5 B^1_j\cap B_r| 
\le 5^n \sum_{j=1}^\infty | B^1_j| 
\le \beta_n 5^n \sum_{j=1}^\infty | B^1_j\cap B_r|  \le (5 \delta)^n |B_r|.
\]

We can use the same procedure in the ball $5B^1_j$ to conclude that 
\[
|E(\beta_n \delta^{-n}\theta^*, 5B^1_j)| 
 \le \sum_{l=1}^\infty |5 B^1_{j,l}\cap B^1_{j}|
   \le (5 \delta)^n  |5B^1_j| 
   = (5^2 \delta)^n  |B^1_j|,
\]
where $B^1_{j,l}$, $l\in \mathbb{N}$, are mutually disjoint balls whose five-fold dilates cover $B^1_j$.
This holds in every $B^1_j$, so we estimate 
\[\begin{split}
\sum_{j=1}^\infty|E(\beta_n \delta^{-n}\theta^*, 5B^1_j)| 
\le
\sum_{j=1}^\infty \sum_{l=1}^\infty |5 B^1_{j,l}\cap B^1_j|
&\le 
(5^{2}\delta)^{n} \beta_n \sum_{j=1}^\infty |B^1_j|\\
&\le 
(5\delta)^{n} \beta_n \sum_{j=1}^\infty | 5B^1_j\cap B_r|
\le  
 (5 \delta)^{2n}\beta_n |B_r|.
\end{split}\]
Denote by $(B^2_j)$ the sequence of balls 
whose five-fold dilates cover the first generation balls $B^1_j$. 
Repeating this process, for each $k\in \mathbb N$, we get a sequence of balls $(B^k_j)$ in $B_{2r}$ such that $\bigcup_{j=1}^\infty 5 B^k_j \supset E(\beta_n\delta^{-n}\theta^*, B_r) \setminus \mathcal N^k$ 
for some measure zero set $\mathcal N^k$, 
\begin{equation}\label{lem:meanreverse_pf3}
\fint_{5B^{k-1}_{\tilde j}\cap 5 B^k_j} \ttheta (x,5B^{k-1}_{\tilde j})\,dx 
< \beta_n\delta^{-n}\theta^*,
\end{equation}
where $\tilde j$ is such that $B^{k-1}_{\tilde j}$ is the predecessor of $B^k_j$, and 
\begin{equation}\label{lem:meanreverse_pf4}
\sum_{j=1}^\infty|E(\beta_n \delta^{-n}\theta^*, 5B^k_j)| \le  (5 \delta)^{(k+1)n}\beta_n^{k} |B_r|.
\end{equation}
We also note that the condition $\delta <\frac1{10}$ implies $B^k_j\subset B_{2r}$ for any $k$ and $j$. 
We impose on $\delta$ the condition $(5 \delta)^{n}\beta_n <1$.

By the same argument as in the beginning of the proof of Lemma~\ref{lem:theta-sum}, we see from the \SA{} condition of $G$ that $|G|_{B_r}(\xi) +\omega(r)|\xi| \approx | G|_{B_\rho(y)}(\xi) +\omega(r)|\xi|$ 
for every $B_\rho (y)\subset B_{3r/2}$ and 
$\xi \in D(B_r)$, and that $D(B_r)\subset D(B_\rho(y))$ for any $B_\rho(y)\subset B_{3r/2}$ with $y\in \overline{B_r}$ and $\rho\in(0,\delta r]$ by choosing sufficiently small $\delta$, see the proof of the previous lemma.
Now, fix $k\in\mathbb N$. 
For any $j\in \mathbb N$, let $B^k:=5B^k_{j}$, let
$B^l:=5B^l_{j_l}$ be its predecessor at level $l=1,2,\dots,k-1$, and $B^0=B_r$. 
Denote by $\rho_l$ the radius of $B^l$.
Note that $|B^{l}|\le \beta_n |B^{l}\cap B^{l-1}|$. Then for $x\in B^k$,
\[\begin{aligned}
&|G(x,\xi) - G_{B_r}(\xi)|
\le
|G(x,\xi) - G_{B^k\cap B^{k-1}}(\xi)|
+
|G_{B^k\cap B^{k-1}}(\xi) - G_{B^{k-1}\cap B^{k-2}}(\xi)|\\
&\qquad\qquad\qquad\qquad\qquad +\cdots+
|G_{B^2\cap B^{1}}(\xi) - G_{B^{1}\cap B_r}(\xi)| +
|G_{B^1\cap B_r}(\xi)-G_{B_r}(\xi)|\\
&\qquad \le 
|G(x,\xi) - G_{B^k}(\xi)|
+ \sum_{l=1}^k \Big( | G_{B^l\cap B^{l-1}}(\xi)- G_{B^l}(\xi) |
+ | G_{B^l\cap B^{l-1}}(\xi)- G_{B^{l-1}}(\xi) | \Big).
\end{aligned}\]
Since $\omega$ is non-decreasing, we have 
$|G|_{B_r}(\xi)+\omega(r)|\xi| \gtrsim | G|_{B_\rho(y)}(\xi)+\omega(\rho)|\xi|$
from before. Hence, using the above observations, \eqref{lem:meanreverse_pf1} and \eqref{lem:meanreverse_pf3},
\[\begin{aligned}
\ttheta(x,B_r) 
&= \sup_{\xi \in D(B_r)} \frac{|G(x,\xi) - G_{B_r}(\xi)|}{ | G_{B_r}(\xi)| +\omega(r)|\xi|}\\
&\lesssim
 \ttheta(x, B^k) 
+ \sum_{j=1}^k\left(\beta_n \fint_{B^j}\ttheta (x, B^{j}) \,dx+ \fint_{B^j\cap B^{j-1}}\ttheta (x, B^{j-1}) \,dx\right)\\
&\lesssim
 \ttheta(x, B^k) 
+ k\beta_n(\delta^{-n}+1) \theta^* .
\end{aligned}\]
Let $\alpha_1\ge 1$ denote the implicit constant in the above estimate. Then it follows that $
E(2\delta^{-n}\beta_n \alpha_1(k+1)\theta^*,B_r)\cap B^k 
\subset E(2\beta_n\delta^{-n}\theta^*,B^k)$
for each ball 
$B^k=5B^k_j$, $j\in \mathbb N$.
Therefore, by the covering property and \eqref{lem:meanreverse_pf4}, we obtain 
\[
|E(2\beta_n\delta^{-n}\alpha_1(k+1)\theta^*,B_r)| \le 
\sum_{j=1}^\infty |E(2\beta_n\delta^{-n}\theta^*, 5B^k_j)| 
\le (5\delta)^{(k+1)n}\beta_n^{k} |B_r|.
\]
%
Finally, when $2\beta_n\delta^{-n}\alpha_1(k+1)\theta^* \le t < 
2\beta_n\delta^{-n}\alpha_1(k+2)\theta^*$ for some $k\in \mathbb N$, we find that
\[\begin{split}
\big|\big\{x\in B_r : \ttheta (x, B_r) > t \big\}\big| 
=|E(t, B_r)|
&\le (5\delta)^{n(k+1)} \beta_n^{k}|B_r| \\
&\le (5\delta)^{n} \exp\Big(- \underbrace{\frac{\delta^n}{6\beta_n\alpha_1} \ln\Big(\frac{1}{(5\delta)^n\beta_n}\Big)}_{=:\alpha_2} \frac{t}{\theta^*}  \Big)|B_r|.
\end{split}\]
As in the John--Nirenberg inequality, this implies that 
\[\begin{aligned}
\fint_{B_r} \ttheta(x, B_r)^\gamma \,dx
& = \frac{1}{|B_r|}\int_0^\infty \gamma t^{\gamma-1} |\{x\in B_r: \ttheta(x, B_r) > t\}| \,dt \\
&\le \gamma (5\delta)^n \int_0^\infty t^{\gamma-1}\exp \left(-\frac{\alpha_2}{\theta^*}t\right) \,dt = \gamma (5\delta)^n \Gamma(\gamma)  \left(\frac{\theta^*}{\alpha_2}\right)^\gamma,
\end{aligned}\]
where $\Gamma$ is the gamma function. 
Thus $\theta_\gamma\le L_\gamma \theta^*_1$. Claims (1) and (2) follow directly 
from this inequality.
\end{proof} 


\section{Examples}\label{sect:examples}

In this section we consider what the Dini mean oscillation condition \DMA{\gamma} and 
the vanishing mean oscillation condition \VMA{} means for specific model energies. 
The first example shows how they are related to \VA{}. 

\begin{example}[Point-wise conditions]\label{eg:Orlicz}
Let $G:\Omega\times \R^M\to\R^N$ satisfy \VA{} with $\tilde \omega=\omega=\omega_V$. 
Fix a ball $B_{2r}\subset\Omega$ and $\xi\in \R^M$ with 
$|G|_{B_r}(\xi) \le |B_r|^{-1}$. Choosing $y\in B_{r}$ such that 
$|G|_{B_r}(\xi) = |G(y,\xi)|$, we see from \VA{} that 
\[\begin{split}
|G(x,\xi)-G_{B_r}(\xi)|&\leq |G(x,\xi)-G(y,\xi)|+\fint_{B_r}|G(y,\xi) - G(z,\xi)|\, dz\\
&  \le 2\omega_V(r) \big(|G(y,\xi)|+\omega_V(r)|\xi|\big)
\end{split}
\]
for every $x\in B_r$. Thus 
\[
\ttheta(x,B_r)
= \sup_{\xi \in D(B_r)} \frac{|G(x,\xi)-G_{B_r}(\xi)|}{|G|_{B_r}(\xi)+\omega_V(r)|\xi|} 
\le
2\omega_V(r),
\]
so that $\theta_\gamma(r)\le c \,\omega_V(r)$ for every $\gamma\ge 1$ and so 
$G$ satisfies \VMA{}.

Moreover, if $\omega_V$ satisfies the Dini condition: 
$\int_0^1\omega_V(r)\,\frac{dr}{r}<\infty$, then $G$ satisfies \DMA{\gamma} for any $\gamma\ge 1$. In particular, if $G$ satisfies \VA{} with $\omega(r)=\tilde \omega(r)= r^\alpha$ for some $\alpha>0$, which yields the $C^{1,\alpha}$-regularity (Theorem~\ref{thm:C1alpha}), then $G$ satisfies \DMA{\gamma} for any $\gamma\ge 1$.

If we consider the old version of \VA{} without the $|\xi|$ as described before Definition~\ref{def:continuity}, then we can obtain similar conclusions based on the estimate 
\[
|G(x,\xi)-G_{B_r}(\xi)|
\le 
2\omega_V(r) \big(|G|_{B_r}(\xi)+\omega_V(r)\big) 
\le 
c\, \omega_V(r)^{\frac1{p'}} \big(|G|_{B_r}(\xi)+\omega_V(r)^{\frac1{p'}}|\xi|\big),
\]
provided $G$ satisfies
\azero{} and \ainc{p} with $p>1$ (the inequality is proved like 
Proposition~4.9, \cite{HasO23}, so we omit the details). 
Again, we find that $G$ satisfies \VMA{} but now the modulus of 
continuity is $\omega_V(r)^{1/p'}$. The corresponding Dini condition is 
$\int_0^1\omega_V(r)^{1/p'}\,\frac{dr}{r}<\infty$. This means that the old version does 
not give as precise control of the decay, which is the reason for the 
change to the new version of \VA{}.
\end{example}

We next look at several special cases of the nonlinearity $A(x,\xi)$ in \eqref{maineq} 
to see what the DMO condition entails. 
We will use the $\BMO$-seminorm
\[
\|f\|_{\BMO, r} := 
\sup_{0<\rho\le r} \sup_{B_\rho\subset \R^n} \fint_{B_\rho}|f(x)-(f)_{B_\rho}|\, dx.
\]
With the second supremum, this seminorm is increasing in $r$, so it corresponds 
to $\theta^*$ in Proposition~\ref{prop:meanreverse}; we could also consider a version 
where the supremum is taken only over balls of radius $r$. 
We say that $f$ satisfies VMO (vanishing mean oscillation) 
if $\|f\|_{\BMO, r}\to 0$ when $r\to 0$, and DMO (Dini mean oscillation) if 
$\displaystyle\int_0^1 \|f\|_{\BMO, r} \frac{dr}r<\infty$. 

\begin{example}[Orlicz with coefficient]\label{eg:coefficient}
Let $\phi\in \Phiw$ and $a:\Omega\to [L^{-1}, L]$ for some $L\ge 1$.
Define
\[
A(x,\xi) = D_\xi[a(x)\phi(|\xi|)]=a(x)\frac{\phi'(|\xi|)}{|\xi|}\xi.
\]
Since 
$(A^{(-1)})_{B_r}(\xi) = (a)_{B_r} \phi'(|\xi|)\xi$,  
$|A^{(-1)}|_{B_r}(\xi) = (a)_{B_r} \phi'(|\xi|)|\xi|$
and $a\ge L^{-1}$,
\[
\ttheta(x, B_r)\le 
\frac{|A^{(-1)}(x,\xi) - (A^{(-1)})_{B_r}(\xi)|}{|A^{(-1)}|_{B_r}(\xi)} 
\le L |a(x)-(a)_{B_r}|, \qquad \xi\in\R^n,
\]
and, for any $\gamma\ge 1$,
\[
\fint_{B_r} \ttheta(x,B_r)^\gamma \,dx 
\lesssim
\fint_{B_r}|a(x)-(a)_{B_r}|^\gamma\, dx
\lesssim
\|a\|_{BMO,r}^\gamma.
\]
Therefore, $\theta_\gamma(r)\lesssim \|a\|_{BMO,r}^\gamma$ and so 
$A^{(-1)}$ satisfies \DMA{\gamma} for any $\gamma\ge 1$ or \VMA{} if $a$ satisfies DMO or VMO, respectively.
\end{example}

We next consider double phase energies. In the following example, the mean continuity 
does not give us anything new compared to the point-wise continuity modulus.

\begin{example}[Double phase]\label{eg:double}
Let $\phi(x,t)=t^p+a(x)t^q$, where $1<p\le q$ and $a:\Omega\to [0,L]$, and define
\[
A(x,\xi) := D_\xi \big[\phi(x,|\xi|)\big] = p|\xi|^{p-2}\xi +q a(x) |\xi|^{q-2}\xi.
\]
Then for $\epsilon\ge 0$,
\[\begin{split}
\ttheta_\epsilon(x,B_r) 
:= 
\sup_{\xi \in D_\epsilon(B_r)} \frac{|A^{(-1)}(x,\xi)-(A^{(-1)})_{B_r}(\xi)|}{|A^{(-1)}|_{B_r}(\xi)}
&\le
\sup_{\xi\in D_\epsilon (B_r)} \frac{q|a(x)-(a)_{B_r}|\, |\xi|^{q}}{p|\xi|^{p}}\\ 
&\lesssim
|a(x)-(a)_{B_r}| r^{-\frac{n(q-p)}{p(1+\epsilon)}},
\end{split}\]
where 
$D_\epsilon(B_r) := \{\xi\in \R^n: 
|A^{(-1)}|_{B_r}(\xi)^{1+\epsilon}
\le |B_r|^{-1}\} \subset \{\xi\in \R^n: |\xi|^{p(1+\epsilon)}\le |B_r|^{-1}\}$ 
and we take $\omega\equiv 0$.
Hence
\[
\fint_{B_r}\ttheta_\epsilon (x,B_r)^\gamma\, dx \lesssim r^{-\frac{\gamma n(q-p)}{p(1+\epsilon)}} \fint_{B_r} |a(x)-(a)_{B_r}|^\gamma\,dx \lesssim
 \left(r^{-\frac{ n(q-p)}{p(1+\epsilon)}} \|a\|_{\mathrm{BMO},r} \right)^\gamma.
\]
Now in order for this to be useful, we need that 
$\|a\|_{\mathrm{BMO},r} \lesssim r^{\frac{n(q-p)}{p(1+\epsilon)}}$ for some 
$\epsilon\ge 0$. But then $a$ must be $\frac{n(q-p)}{p(1+\epsilon)}$-Hölder continuous by the Campanato-embedding, so $a$ has point-wise continuity 
modulus and the mean continuity becomes irrelevant.
\end{example}

In the next example, we exchange the coefficients $1$ and $a(x)$ of $|\xi|^p$ and $|\xi|^q$ 
in the double phase problem. The end result has standard $q$-growth, but illustrates the role of 
$\omega$ in Definition~\ref{def:meanoscillation} which is related to small values of $|\xi|$.

\begin{example}
Let $\phi(x,t)=a(x) t^p+t^q$, where $1<p\le q$ and $a:\Omega\to [0,L]$, and define
\[
A(x,\xi) := D_\xi \big[\phi(x,|\xi|)\big] = pa(x)|\xi|^{p-2}\xi +q |\xi|^{q-2}\xi.
\]
Then we calculate, for $\xi\in\R^n$,
\[\begin{split}
\frac{|A^{(-1)}(x,\xi) - (A^{(-1)})_{B_r}(\xi)|}{|A^{(-1)}|_{B_r}(\xi)+\omega(r)|\xi|}
&=
\frac{p|a(x)-(a)_{B_r}| \, |\xi|^{p-1}}{p(a)_{B_r} |\xi|^{p-1} + q|\xi|^{q-1} + \omega(r)} \\
&\approx
|a(x)-(a)_{B_r}| \min\bigg\{\frac{1}{(a)_{B_r}}, |\xi|^{p-q}, 
\frac{|\xi|^{p-1}}{\omega(r)}
\bigg\}.
\end{split}\]
Since $t^{p-q}$ is decreasing and $\frac{t^{p-1}}{\omega(r)}$ is increasing in $t$, the largest value of $\min\{t^{p-q}, \frac{t^{p-1}}{\omega(r)}\}$
occurs when $t^{q-1}=\omega(r)$ and equals $\omega(r)^{(p-q)/(q-1)}$. 
Thus 
\[\begin{split}
\left(\fint_{B_r} \ttheta(x,B_r)^\gamma \,dx \right)^{1/\gamma}
&\lesssim
\omega(r)^{\frac{p-q}{q-1}} \left(\fint_{B_r}|a(x)-(a)_{B_r}|^\gamma\, dx\right)^{1/\gamma}
\approx 
\omega(r)^{\frac {p-q}{q-1}}\|a\|_{\BMO, r}.
\end{split}\]
Thus $\theta_\gamma(r) + \omega(r) \lesssim \omega(r)^{\frac {p-q}{q-1}}\|a\|_{\BMO, r} + \omega(r)
\le \|a\|_{\BMO, r}^{\frac {q-1}{2q-p-1}}$, where the last estimate is obtained by the minimizing 
choice of $\omega(r)$.
Therefore, $A^{(-1)}$ is \DMA{\gamma} for any $\gamma\ge 1$ if the coefficient $a$ is $\frac{q-1}{2q-p-1}$-DMO. 
\end{example}


Next, we consider nontrivial examples whose mean oscillation conditions are weaker than the corresponding point-wise ones. For $f:\Omega\to \R$, we say that $f$ is $\log$-VMO or $\log$-DMO if
\[
\lim_{r\to 0^+} \|f\|_{\BMO, r}\log\tfrac{1}{r} =0 
\quad\text{or}\quad
\int_0^{1} \|f\|_{\BMO, r}\log(\tfrac{1}{r})\, \frac{dr}{r}<\infty,
\]
respectively. If $\|f\|_{\BMO, r}$ is replaced by the point-wise continuity 
modulus $\omega_f$, then we get the stronger 
vanishing $\log$-H\"older continuity and the $\log$-Dini continuity. 
By Lemma~\ref{lem:Spanne}, the $\log$-DMO condition implies the $\log$-H\"older continuity 
$\omega_f(r)\lesssim (\log\frac{1}{r})^{-1}$. 
However, the $\log$-VMO does not imply the $\log$-H\"older continuity.

\begin{example}[Borderline double phase]\label{eg:borderline}
Let $\phi(x,t)= t^p+a(x)\log(1+t)t^p$, where $1<p<\infty$ and $a:\Omega\to [0,L]$, and define
\[
A(x,\xi) := D_\xi \big[\phi(x,|\xi|)\big] =\bigg\{p+ a(x) \bigg(p\log(1+|\xi|)+\frac{|\xi|}{1+|\xi|}\bigg)\bigg\}|\xi|^{p-2}\xi.
\]
For any $\xi\in D(B_r)$, since $|\xi|^{p}\le |B_r|^{-1}$,
\[\begin{split}
\frac{|A^{(-1)}(x,\xi) - (A^{(-1)})_{B_r}(\xi)|}{ |A^{(-1)}|_{B_r}(\xi)}
& =
\frac{|a(x)-(a)_{B_r}| \{p\log(1+|\xi|) + \frac{|\xi|}{1+|\xi|}\}}{p + (a)_{B_r} \{p\log(1+|\xi|) + \frac{|\xi|}{1+|\xi|}\}} \\
&\lesssim |a(x)-(a)_{B_r}| \log(e+|\xi|)\lesssim |a(x)-(a)_{B_r}| \log \tfrac1r.
\end{split}
\]
This implies, for $\gamma\ge 1$, that 
\[
\fint_{B_r} \ttheta(r)^{\gamma}\,dx \lesssim \left(\|a\|_{\mathrm{BMO},r}\log\tfrac1r\right)^{\gamma}.\qedhere
\]
Therefore, $A^{(-1)}$ satisfies \DMA{\gamma} for any $\gamma\ge 1$ or \VMA{} 
if $a$ is $\log$-DMO or $\log$-VMO, respectively.
\end{example}

The Dini mean oscillation condition is inspired by the coefficient case. Consequently, it is 
more difficult to obtain results when the variability is not in a coefficient, as the next 
result illustrates. 

\begin{proposition}[Variable exponent]\label{prop:variable-exponent}
Let $\phi(x,t):=\tfrac{1}{p(x)}t^{p(x)}$, where $p:\Omega\to [p^-,p^+]$ with $1<p^-\le p^+$, and 
define
\[
A(x,\xi) 
:= 
D_\xi \big[\phi(x,|\xi|)\big]= |\xi|^{p(x)-2}\xi.
\]
If $p$ is $\log$-VMO, then $A^{(-1)}$ 
satisfies Definition~\ref{def:meanoscillation} 
for any $\gamma\ge1$ with $\theta(r)=\|p\|_{\mathrm{BMO},r}\log\frac1r$ and $\omega(r)= r^{p^--1}$.
\end{proposition}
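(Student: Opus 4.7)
The plan is to reduce $\ttheta(\cdot, B_r)$ to a one-dimensional problem in $t := |\xi|$ and then exploit John--Nirenberg's exponential inequality under the log-VMO hypothesis. Since $A^{(-1)}(x,\xi) = |\xi|^{p(x)-1}\xi$ is parallel to $\xi$, writing $g(x,t) := t^{p(x)-1}$, $\bar g_{B_r}(t) := \fint_{B_r} g(\cdot,t)\,dy$, $\bar p := (p)_{B_r}$ and $\beta(y) := p(y) - \bar p$, one gets
\[
\ttheta(x,B_r) = \sup_{t \in E(B_r)}\frac{|g(x,t) - \bar g_{B_r}(t)|}{\bar g_{B_r}(t) + \omega(r)},\qquad E(B_r) := \{t > 0 : t\bar g_{B_r}(t) \le |B_r|^{-1}\}.
\]
Jensen gives $\bar g_{B_r}(t) \ge t^{\bar p - 1}$ for all $t > 0$, which combined with membership in $E(B_r)$ yields $t \le c r^{-n/p^-}$, hence $|\log t| \le c\log(1/r)$ whenever $t \ge r$. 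The mean value theorem also gives $|t^{p(x)-1} - t^{p(y)-1}| \le |p(x)-p(y)||\log t|\, M(x,y,t)$, where $M = t^{\bar p - 1}(t^{\beta(x)} + t^{\beta(y)})$ for $t \ge 1$ and $M \le t^{p^- - 1}$ for $t \le 1$.

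I would then split into two regimes. For $r \le t \le c r^{-n/p^-}$, dividing by $\bar g_{B_r}(t) \ge t^{\bar p - 1}$ yields
\[
\ttheta(x,B_r) \lesssim \log(1/r)\,\sup_t\fint_{B_r}|p(x)-p(y)|\,(t^{\beta(x)} + t^{\beta(y)})\,dy,
\]
and $\sup_t t^{\beta(\cdot)} \le r^{-C|\beta(\cdot)|}$ with $C = C(n,p^-)$. For $0 < t \le r$ the denominator satisfies $\bar g_{B_r}(t) + \omega(r) \ge r^{p^- - 1}$; combined with the elementary inequality $(t/r)^{p^- - 1}|\log t| \lesssim \log(1/r)$ for $t \in (0,r]$ and $p^- > 1$, this gives $\ttheta(x,B_r) \lesssim \log(1/r)\bigl(|p(x) - \bar p| + \|p\|_{\BMO,r}\bigr)$.

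The log-VMO assumption $\|p\|_{\BMO,r}\log(1/r) \to 0$ ensures that for every fixed $\gamma \ge 1$ and all sufficiently small $r$, the exponent $C\gamma\log(1/r)$ lies within the John--Nirenberg exponential-integrability range for $p - \bar p$. This supplies the uniform bounds
\[
\fint_{B_r} r^{-C\gamma|\beta(y)|}\,dy \le C(\gamma)
\quad\text{and}\quad
\bigl(\fint_{B_r}|p-\bar p|^\gamma\,dy\bigr)^{1/\gamma} \le C\|p\|_{\BMO,r},
\]
after which applying Hölder and Minkowski to the two regime estimates produces $\bigl(\fint_{B_r}\ttheta(x,B_r)^\gamma\,dx\bigr)^{1/\gamma} \lesssim \log(1/r)\|p\|_{\BMO,r}$, as claimed.

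The main obstacle is commuting $\sup_t$ with the spatial integration in the large-$t$ regime, since the weights $t^{\beta(\cdot)}$ are pointwise as large as $r^{-C|\beta|}$, which is unbounded. The log-VMO hypothesis is calibrated precisely so that John--Nirenberg controls $r^{-C\gamma|\beta|}$ in $L^1$-mean at the relevant $C$ and $\gamma$ once $r$ is small enough, and this is what enables the exchange and closes the argument.
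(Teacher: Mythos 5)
Your proof is correct and reaches the stated conclusion, but your decomposition differs noticeably from the paper's. The paper introduces a \emph{$\xi$-dependent} exponent $\bar p = \bar p(\xi) := \log_{|\xi|}\fint_{B_r}|\xi|^{p(y)}\,dy$, which makes the ratio collapse exactly to $\big||\xi|^{p(x)-\bar p}-1\big|$; this is then bounded by $\fint_{B_r}\big[\exp(n|p(x)-p(y)|\log\tfrac1r)-1\big]\,dy$ using $|\xi|^{p(x)-p(y)}\le r^{-n|p(x)-p(y)|}$ on the range $|\xi|\in[r^2,r^{-n}]$, after which the elementary inequality $e^{ab}-1\le a e^b$ for $a\in[0,1-\tfrac1e]$ extracts the factor $n\gamma\|p\|_{\BMO,r}\log\tfrac1r$ and John--Nirenberg controls the residual exponential integral. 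You instead take $\bar p := (p)_{B_r}$ (the plain spatial average), apply Jensen to get $\bar g_{B_r}(t)\ge t^{\bar p-1}$, and use the mean value theorem to bound $|t^{p(x)-1}-t^{p(y)-1}|$ by $|p(x)-p(y)|\,|\log t|\,t^{\bar p-1}(t^{\beta(x)}+t^{\beta(y)})$; the weights $t^{\beta(\cdot)}$ are then bounded on $D(B_r)$ by $r^{-C|\beta(\cdot)|}$ and handled by the same John--Nirenberg exponential-integrability bound. Both arguments hinge on the same calibration: log-VMO forces $C\gamma\log\tfrac1r\cdot\|p\|_{\BMO,r}$ into the John--Nirenberg range for small $r$. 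Your split at $t=r$ (versus the paper's $r^2$) and your closing step (Minkowski plus a Cauchy--Schwarz at exponent $2\gamma$ to separate $r^{-C|\beta|}$ from $|\beta|$) are correct; your approach trades the paper's slicker algebraic identity for an arguably more elementary MVT computation. One small omission: you do not note that for $r$ bounded away from $0$ the conclusion holds trivially because $\ttheta$ is uniformly bounded by $c\,r_1^{p^--p^+}$, which the paper records explicitly.
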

\begin{proof}
Let $r\in(0,1]$. Note that $D(B_r)=\{\xi\in\R^n:(|\xi|^{p(\cdot)})_{B_r}
\le |B_r|^{-1}\}\subset \{\xi\in\R^n: |\xi|\in [0,r^{-n}]\}$.
If $|\xi|\in [0, r^{2}]$, 
\[
\frac{|A^{(-1)}(x,\xi) - A_{B_r}^{(-1)}(\xi)|}{|A^{(-1)}|_{B_r}(\xi) + \omega(r)|\xi|} 
=
\frac{|A(x,\xi) - A_{B_r}(\xi)|}{|A|_{B_r}(\xi)+ r^{p^--1}} 
\le
2 r^{p^--1}.
\]
If $|\xi|=1$, $|A(x,\xi) - A_{B_r}(\xi)|=0$ and our estimate will be trivial. 
Suppose $|\xi|\in [r^2,r^{-n}]\setminus \{1\}$ and define 
\[
\bar{p} := - \log_{|\xi|} \fint_{B_r}|\xi|^{-p(y)}\, dy \in [p_{B_r}^-, p_{B_r}^+].
\] 
From $|\xi|^{p(x)-p(y)}\le 
\max\{r^{-n\,|p(x)-p(y)|}, r^{-2\,|p(x)-p(y)|}\} = r^{-n\,|p(x)-p(y)|}$ it follows that 
\[
|\xi|^{p(x)-\bar{p}} 
=
\fint_{B_r}|\xi|^{p(x)-p(y)}\, dy
\le
\fint_{B_r}r^{-n\,|p(x)-p(y)|}\, dy
=
\fint_{B_r} \exp(n\,|p(x)-p(y)|\log\tfrac1r)\, dy.
\]
An analogous argument gives a lower bound with integrand 
$\exp(-n\,|p(x)-p(y)|\log\tfrac1r)$. 
Furthermore, $1-e^{-s}\le e^s - 1$ when $s\ge 0$, and $(\fint_{B_r}|\xi|^{p(y)}\,dy)^{-1}\le |\xi|^{-\bar p}$ by Jensen's inequality.
Thus we obtain that 
\[\begin{split}
\frac{|A^{(-1)}(x,\xi) - A^{(-1)}_{B_r}(\xi)|}{|A^{(-1)}|_{B_r}(\xi)+r^{p^--1}} 
&\le
\frac{||\xi|^{p(x)} - |\xi|^{\bar p}|}{ |\xi|^{\bar p}} 
=
||\xi|^{p(x)-\bar{p}} - 1|\\
&\le 
\fint_{B_r} \big[\exp(n\,|p(x)-p(y)|\log\tfrac1r)-1\big]\, dy.
\end{split}\]
Combining the above results and Hölder's inequality, we have shown that
\[
\fint_{B_r}\ttheta(x,B_r)^\gamma\, dx
\lesssim 
\fint_{B_r}\fint_{B_r} \big[\exp(n\,|p(x)-p(y)|\,\log\tfrac1r)-1\big]^\gamma\, dy\,dx +r^{\gamma(p^--1)}.
\]
By differentiation it follows that 
$e^{ab}-1\le ae^b$ when $b>0$ and $a\in [0, 1-\frac{\ln b}b]$. In 
particular, this holds for all $a\in[0, 1-\frac1e]$.
It follows that
\[\begin{split}
&\fint_{B_r}\fint_{B_r} \big[\exp(n\,|p(x)-p(y)|\,\log\tfrac1r)-1\big]^\gamma\, dy\,dx\\
&\le
(c_n n\gamma \,\|p\|_{\BMO_r}\log\tfrac1r)^\gamma
\fint_{B_r} \fint_{B_r} \exp\bigg(\frac{|p(x)-p(y)|}{c_n \|p\|_{\BMO,r}}\bigg)\, dy\, dx \\
&\lesssim 
(c_n n\gamma \,\|p\|_{\BMO_r}\log\tfrac1r)^\gamma
\fint_{B_r} \exp\bigg(\frac{|p(x)-p_{B_r}|}{c_n \|p\|_{\BMO,r}}\bigg)\, dx ,
\end{split}\]
where $c_n > 1$ is the constant from the John--Nirenberg lemma depending on $n$ and $r>0$ is 
so small that $c_n n\gamma \,\log\tfrac1r \,\|p\|_{\BMO,r}\le 1-\frac1e$ by the $\log$-VMO
assumption. The John--Nirenberg lemma ensures that the integral on the right-hand side is finite. 
Thus we have shown that 
\[
\bigg(\fint_{B_r}\ttheta(x,B_r)^\gamma\, dx \bigg)^\frac1\gamma
\lesssim 
\log\tfrac1r \,\|p\|_{\BMO,r} + r^{p^--1}
\]
for all sufficiently small $r$. For $r\ge r_1$, the estimate still holds since 
$\ttheta$ is bounded by the constant $c r_1^{p^--p^+}$. 
\end{proof}

The definition of $\log$-DMO and $\log$-VMO combined with the previous 
result gives the following example. 

\begin{example}[Variable exponent]
In the setting of Proposition~\ref{prop:variable-exponent}, 
$A^{(-1)}$ satisfies \DMA{\gamma} for any $\gamma\ge 1$ or \VMA{} if $p$ is $\log$-DMO 
or $\log$-VMO, respectively. 
\end{example}

Finally, we show that also the Lorentz-type restriction is a special case of our condition.
Interestingly, this proof does not holds if we include the supremum with respect to the radius in the definition of $\theta_\gamma$, i.e.\ use 
$\theta^*_\gamma$ from Proposition~\ref{prop:meanreverse}. This shows the importance of 
having the optimization in our condition. 
We define the \emph{non-increasing rearrangement $f^*_\Omega$} by
\[
f^*_E(t) 
:= 
\inf\{s\ge 0 \mid \mu_E(s)\le t \}
\quad\text{where}\quad \mu_E(s):=\big|\big\{x\in E \,\big|\, |f(x)|>s\big\}\big|.
\]
Note that $f^*_E$ is a kind of inverse of the distribution function $\mu_E$. 
The definition directly implies that $(|f|^\gamma)^*_\Omega=(f^*_\Omega)^\gamma$. 
The \emph{Lorentz space $L^{n,1}(\Omega)$} is defined by the condition 
$\int_0^\infty t^{1/n} f^*_\Omega(t)\frac{dt}t < \infty$. 

\begin{proposition}[Orlicz with Lorentz coefficient]\label{prop:Lorentz}
Let $A(x,\xi)=a(x)\frac{\psi'(|\xi|)}{|\xi|}\xi$, where $a:\Omega\to [L^{-1},L]$ with $L\ge 1$. If $a\in W^{1,1}(\Omega)$ with $|Da|\in L^{n,1}(\Omega)$, then 
$A^{(-1)}$ satisfies \DMA{\gamma} for any $\gamma\ge 1$.
\end{proposition}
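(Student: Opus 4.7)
The proposition exploits the separable form $A^{(-1)}(x,\xi)=a(x)\psi'(|\xi|)\xi$: since $(A^{(-1)})_{B_r}(\xi)=(a)_{B_r}\psi'(|\xi|)\xi$ and $a\ge L^{-1}$, we obtain the pointwise estimate $\ttheta(x,B_r)\le L\,|a(x)-(a)_{B_r}|$, exactly as in Example~\ref{eg:coefficient}. My plan is therefore to control $\sup_{B_{2r}\subset\Omega}\fint_{B_r}|a-(a)_{B_r}|^\gamma\,dx$ and show that its $\frac{dr}{r}$-integral on $(0,1)$ is controlled by $\|Da\|_{L^{n,1}}$.

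The heart of the argument is the case $\gamma=1$, via a Poincaré--Hardy--Littlewood--Fubini chain. Poincaré's inequality gives $\fint_{B_r}|a-(a)_{B_r}|\,dx \le c_n\,r\fint_{B_r}|Da|\,dy$, and the Hardy--Littlewood rearrangement inequality provides the crucial \emph{ball-independent} bound $\int_{B_r}|Da|\,dy \le \int_0^{|B_r|}(|Da|)^*_\Omega(t)\,dt$. Hence
\[
\theta_1(r)\le c\,r^{1-n}\int_0^{c_n r^n}(|Da|)^*_\Omega(t)\,dt =: \phi(r).
\]
Swapping the order of integration in $\int_0^1\phi(r)\,\frac{dr}{r}$ and computing the inner integral $\int_{(t/c_n)^{1/n}}^1 r^{-n}\,dr \lesssim t^{1/n-1}$ yields
\[
\int_0^1\theta_1(r)\,\frac{dr}{r}\le C\int_0^{c_n}(|Da|)^*_\Omega(t)\,t^{1/n-1}\,dt\le C\|Da\|_{L^{n,1}(\Omega)}<\infty,
\]
establishing \DMA{1}.

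This Fubini step is precisely the subtlety flagged in the paragraph preceding the proposition: it works because $\theta_1(r)$ is indexed by the single scale $r$; replacing it by $\theta^*_1(r)=\sup_{\rho\le r}\theta_1(\rho)$ would introduce an extra factor $\log(c_n/t)$ in the Fubini integrand which cannot be absorbed by $\|Da\|_{L^{n,1}}$ in general (simple step-function rearrangements give finite $\int_0^1\theta_1(r)\,dr/r$ but divergent $\int_0^1\theta^*_1(r)\,dr/r$, because plateaus of $(|Da|)^*_\Omega$ force $\sup_{\rho\le r}\phi(\rho)$ to sit at its maximum on a dyadically long interval of $r$).

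For the extension to $\gamma>1$, my plan is to exploit that $a$ is bounded and hence $\ttheta(x,B_r)\le 2L^2$ is uniformly bounded; this gives $\ttheta^\gamma\le(2L^2)^{\gamma-1}\ttheta$, whence $\theta_\gamma(r)\le C_\gamma\,\theta_1(r)^{1/\gamma}\le C'_\gamma\,\phi(r)^{1/\gamma}$. The main obstacle I anticipate is showing $\int_0^1\phi(r)^{1/\gamma}\,\frac{dr}{r}<\infty$ for $|Da|\in L^{n,1}$: a direct Hölder is blocked by $\int_0^1 dr/r=\infty$, so the plan is to substitute $s=c_n r^n$ and obtain a bound of the form $\int_0^{c_n}s^{1/(n\gamma)-1}F(s)^{1/\gamma}\,ds$ with $F(s)=\int_0^s(|Da|)^*_\Omega(t)\,dt$, then control this via a Hardy-type estimate tailored to the non-increasing rearrangement so as to again recover $C\|Da\|_{L^{n,1}}$.
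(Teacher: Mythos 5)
Your reduction to $\ttheta(x,B_r)\le L\,|a(x)-(a)_{B_r}|$ matches the paper, and your $\gamma=1$ argument (Poincar\'e, then Hardy--Littlewood rearrangement to get a ball-independent bound, then Fubini) is correct and is essentially the degenerate case of what the paper does. Your remark about why $\theta_\gamma$ rather than $\theta^*_\gamma$ is needed is also on target.

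However, the extension to $\gamma>1$ has a genuine gap. The interpolation $\ttheta^\gamma\le(2L^2)^{\gamma-1}\ttheta$, while algebraically valid, yields $\theta_\gamma(r)\lesssim\theta_1(r)^{1/\gamma}$, and this bound is too lossy: taking a fractional power of a small quantity makes it \emph{larger}, and the Dini integral of $\theta_1^{1/\gamma}$ need not converge even when the Dini integral of $\theta_1$ does. Concretely, with $(|Da|)^*_\Omega(t)=t^{-1/n}\big(\log\tfrac{e}{t}\big)^{-2}\chi_{(0,1)}(t)$ one has $|Da|\in L^{n,1}$ and, tracing through your chain,
\[
\theta_1(r)\lesssim \phi(r)=c\,r^{1-n}\int_0^{c_n r^n}(|Da|)^*_\Omega(t)\,dt\approx\big(\log\tfrac1r\big)^{-2},
\]
so $\theta_1(r)^{1/\gamma}\approx(\log\tfrac1r)^{-2/\gamma}$, whose $\frac{dr}{r}$-integral over $(0,1)$ diverges for every $\gamma\ge 2$. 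The Hardy-type estimate you hope to invoke at the end cannot exist, because you would need parameters $\alpha=\tfrac{1}{n\gamma}+\tfrac1\gamma>\beta=\tfrac1\gamma$, whereas Stepanov's inequality requires $\alpha<\beta$.

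The paper sidesteps this by never passing through $\theta_1$. For $\gamma>n'$ it applies the Sobolev--Poincar\'e inequality directly in $L^\gamma$,
\[
\bigg(\fint_{B_r}|a-(a)_{B_r}|^\gamma\,dx\bigg)^{1/\gamma}\le c\,r\bigg(\fint_{B_r}|Da|^{\tilde\gamma}\,dx\bigg)^{1/\tilde\gamma},\qquad \tilde\gamma=\tfrac{\gamma n}{n+\gamma}\in(1,n),
\]
so the Sobolev gain keeps the right-hand exponent $\tilde\gamma$ strictly below $n$. After the rearrangement step and the change of variables $\tau=|B_r|$, Stepanov's Hardy inequality applies with $\alpha=\tfrac1n$ and $\beta=\tfrac1{\tilde\gamma}$, which satisfy $0<\alpha<\beta<1$, and the computation closes with $\int_0^\infty t^{1/n}(|Da|)^*_\Omega(t)\,\tfrac{dt}{t}=\|Da\|_{L^{n,1}}$. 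The remaining range $\gamma\le n'$ follows by the monotonicity of $\theta_\gamma$ in $\gamma$. So the missing ingredient in your proposal is the Sobolev embedding on the right-hand side of the Poincar\'e inequality: you must trade the $L^\gamma$ oscillation of $a$ against a strictly smaller exponent on $Da$, rather than against $L^1$ and then interpolate by boundedness.
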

\begin{proof} 
We follow the argument in \cite[Section 2.3]{KuuMin14}.
We consider $\gamma>n'$; the case $\gamma\le n'$ follows from this by Hölder's inequality. 
For $B_{2r}\subset \Omega$ and $\tilde \gamma:=\frac{\gamma n}{n+\gamma}\in (1,n)$, 
by the Sobolev--Poincar\'e inequality,
\[\begin{split}
\bigg(\fint_{B_r}|a-(a)_{B_r}|^\gamma\, dx\bigg)^{\frac1\gamma} 
 \le c r\bigg(\fint_{B_r}|Da|^{\tilde\gamma}\, dx\bigg)^{\frac1{\tilde\gamma}} 
 & \le c r \bigg( \fint^{|B_r|}_0 (|Da|^{\tilde \gamma})^*_{B_r}(t)\, dt\bigg)^{\frac1{\tilde\gamma}}\\
& \le c r \bigg( \fint^{|B_r|}_0 (|Da|^{\tilde \gamma})^*_{\Omega}(t)\, dt\bigg)^{\frac1{\tilde\gamma}}
\end{split}\]
where we used that $f_{U_2}^{*}\le f_{U_1}^{*}$ if $U_2\subset U_1$. 
Note that the right hand side is independent of the center of $B_r$. Taking supremum over $B_r\subset\Omega$ and changing variables $\tau=|B_r|=|B_1|r^n$, we find that 
\[
\int_{0}^{1} \theta_\gamma(r) \,\frac{dr}{r}
\le c \int_0^{1} \bigg( \fint^{|B_r|}_0 (|Da|^{\tilde \gamma})^*_{\Omega}(t)\, dt\bigg)^{\frac1{\tilde\gamma}} \,dr 
\le 
c \int_0^{\infty} \tau^{\frac{1}{n}-1} \bigg( \fint^{\tau}_0 (|Da|^{\tilde \gamma})^*_{\Omega}(t)\, dt\bigg)^{\frac1{\tilde\gamma}} \, d\tau.
\]
We recall a Hardy-type inequality for the quasinorm case $\beta<1$
(see \cite[Theorem~3]{Stepanov93} with $p=q$ and $w(\tau)=v(\tau)=\tau^{\alpha-1}$):
\[
\int_0^\infty \tau^{\alpha-1} \bigg(\fint_0^\tau f(t)\, dt\bigg)^{\beta}\, d\tau \le c(\alpha,\beta) \int_0^\infty t^{\alpha-1} f(t)^{\beta} \, dt, \quad 0<\alpha<\beta<1,
\]
 where $f$ is a non-negative, non-increasing function. Using this inequality with $\beta=\frac1{\tilde \gamma}\in (\frac1n, 1)$ and $\alpha=\frac1n$, we obtain
\[
\int_{0}^{1} \theta_\gamma(r) \,\frac{dr}{r}
\lesssim 
\int_0^{\infty} t^{\frac{1}{n}-1} (|Da|^{\tilde \gamma})^*_{\Omega}(t)^{\frac1{\tilde\gamma}} \, dt
= 
\int_0^{\infty} t^{\frac{1}{n}} (|Da|)^*_{\Omega}(t) \, \frac{dt}t < \infty,
\]
by $(|f|^\gamma)^*_\Omega=(f^*_\Omega)^\gamma$ and the definition of the Lorentz space. 
\end{proof}

We can combine the previous two propositions to cover the variable exponent 
with coefficient and regularity given by Lorentz spaces as considered by Baroni \cite{Bar23}. 

\begin{example}[Variable exponent with Lorentz conditions]
Let $A(x,\xi)=a(x)|\xi|^{p(x)-2}\xi$, where $a:\Omega\to [L^{-1},L]$ with $L\ge 1$. 
If $a, p\in W^{1,1}(\Omega)$ with $|Da|, |Dp|\in L^{n,1}(\Omega)$ and 
$\|Dp\|_{L^{n,1}(B_r)}\log \frac1r\le c$, then 
we can show that $A^{(-1)}$ satisfies \DMA{\gamma} for any $\gamma\ge 1$.

To reach this conclusion, we estimate 
$|A|_{B_r}(\xi)\ge L^{-1}|\xi|^{\bar p-1}$ and 
\[
\big|a(x)|\xi|^{p(x)-1}-a(y)|\xi|^{p(y)-1}\big|
\le
L\big||\xi|^{p(x)-1}-|\xi|^{p(y)-1}\big|
+
|a(x)-a(y)|\, |\xi|^{p(y)-1}.
\]
Since $L^{n,1}(\Omega) \hookrightarrow L^\infty_\loc(\Omega)$, $p$ is $\log$-Hölder 
continuous and so $|\xi|^{p(y)-\bar p}\le c$ \cite[(1.10)]{Bar23}. Then we handle 
$|a(x)-a(y)|$ in the second term 
as in Proposition~\ref{prop:Lorentz} using $|Da|\in L^{n,1}(\Omega)$. For the first term we 
arrive as in Proposition~\ref{prop:variable-exponent} at an estimate 
\[
\log\tfrac1r \, \fint_{B_r}|p(x)-(p)_{B_r}|\, dx, 
\]
from which the integral is estimated as in Proposition~\ref{prop:Lorentz}. The details are left 
to the interested reader.
\end{example}


\section{Comparison estimates}\label{sect:comparison}

In this section, we always assume that $A:\Omega\times \R^n \to \R^n$ satisfies the quasi-isotropic $(p,q)$-growth condition in Definition~\ref{def:quasiisotropy}.

\subsection{Growth functions and approximating energies}
We start with recalling the properties of a so-called growth function $\phi$ of $A$.

\begin{proposition}[Proposition~3.3, \cite{HasO22b}]\label{prop:growthfunction}
There exists $\phi\in \Phic(\Omega)$ with $\phi'(x,\cdot)\in C([0,\infty))$ such that $\phi'$ is \azero{}, \inc{p-1} and \dec{q_1-1} for some $q_1\ge q$ and that 
\begin{equation}\label{phi-growth}
\bar L^{-1}\big(|A(x,\xi)| + |\xi| |D_\xi A(x,\xi)|\big) 
\le \phi'(x,|\xi|)
\le \bar L\, |\xi|\, D_\xi A(x,\xi) e \cdot e
\end{equation}
for every $x\in\Omega$ and $\xi,e\in \R^n$ with $\xi\neq 0$ and $|e|=1$. Here, $q_1$ and $\bar L\ge 1$ depend on $n$, $p$, $q$ and $L$. We call this $\phi$ the \emph{growth function} of $A$.
\end{proposition}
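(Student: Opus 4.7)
The plan is to construct $\phi$ in three moves: first, package the direction-dependent information in $D_\xi A(x,\xi)$ into a radial profile using quasi-isotropy; second, identify this profile with both $|A(x,\xi)|$ and $|\xi|\,|D_\xi A(x,\xi)|$; third, regularize it to obtain a continuous, strictly monotone $\phi'$. I would start with
\[
g(x,t) := t \cdot \sup_{|\xi|=t,\,|e|=1} D_\xi A(x,\xi)\,e\cdot e, \qquad (x,t)\in\Omega\times(0,\infty),
\]
which by condition (ii) of Definition~\ref{def:quasiisotropy} satisfies $g(x,t) \approx t\, D_\xi A(x,\xi)\,e\cdot e \approx t\,|D_\xi A(x,\xi)|$ for every $\xi,e$ with $|\xi|=t$ and $|e|=1$, the implicit constants depending only on $L$. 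Condition (i) then transfers through this equivalence, so $g$ inherits \azero{}, \ainc{p-1} and \adec{q-1}.

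Writing $A(x,\xi)=\int_0^1 D_\xi A(x,s\xi)\,\xi\, ds$ and using the direction-free equivalence just established, I obtain
\[
|A(x,\xi)| \approx \int_0^{|\xi|} \frac{g(x,s)}{s}\, ds.
\]
Since $g$ satisfies \ainc{p-1} with $p>1$, this integral is comparable to $g(x,|\xi|)$; combined with $|\xi|\,|D_\xi A(x,\xi)|\approx g(x,|\xi|)$ this yields $|A(x,\xi)|+|\xi|\,|D_\xi A(x,\xi)|\approx g(x,|\xi|)$, and the right-hand inequality of \eqref{phi-growth} with $g$ in place of $\phi'$ follows directly from (ii) via $|\xi|\, D_\xi A(x,\xi)\,e\cdot e \gtrsim g(x,|\xi|)$.

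Since $g$ is only almost monotone and need not be continuous, I would regularize it by a Hardy-type average such as
\[
\phi'(x,t) := (p-1)\,t^{p-1}\int_0^t s^{-p}\, g(x,s)\, ds.
\]
By construction $\phi'(x,t)/t^{p-1}$ is non-decreasing, giving \inc{p-1}; a direct differentiation using \adec{q-1} of $g$ yields \dec{q_1-1} for some $q_1=q+c(n,p,q,L)\ge q$; and the almost monotonicity of $g$ ensures $\phi'\approx g$, so the bounds \eqref{phi-growth} survive the regularization. Setting $\phi(x,t):=\int_0^t\phi'(x,s)\,ds$ produces a convex, left-continuous, non-negative function with $\phi(x,0)=0$, hence $\phi\in\Phic(\Omega)$; \azero{} for $\phi'$ follows from \azero{} for $g$, which in turn is inherited from condition (i).

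The main obstacle is the tight bookkeeping of constants in the first two moves: one must verify that the sup in the definition of $g$ is genuinely equivalent to every individual directional slice $D_\xi A(x,\xi)\,e\cdot e$, not merely to its maximum over $e$, because the identification with $|A(x,\xi)|$ is obtained by integrating a \emph{single} radial trajectory $s\mapsto D_\xi A(x,s\xi)$. A secondary difficulty is that passing from almost monotone to strictly monotone via the Hardy-type average inflates the upper index from $q$ to some $q_1\ge q$, and one must check that this loss is controlled only in terms of the stated parameters $n,p,q,L$ and not on $A$ itself.
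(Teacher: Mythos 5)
Your plan—encoding the quasi-isotropy into a scalar profile $g(x,t):=t\sup_{|\xi|=t,|e|=1}D_\xi A(x,\xi)e\cdot e$, showing $g\approx|A|\approx|\xi|\,|D_\xi A|$, and then regularizing—is the right decomposition and essentially the strategy of the cited source. The first two moves are sound, modulo one elision: to get the lower bound $|A(x,\xi)|\gtrsim\int_0^{|\xi|}g(x,s)/s\,ds$ you must avoid cancellation in $\int_0^1 D_\xi A(x,s\xi)\xi\,ds$, which means passing through the positive scalar quantity $A(x,\xi)\cdot\xi=\int_0^1 D_\xi A(x,s\xi)\xi\cdot\xi\,ds\approx|\xi|\int_0^{|\xi|}g(x,s)/s\,ds$ rather than through ``the direction-free equivalence'' alone.

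The regularization step, however, has a genuine gap. The integral
\[
\int_0^t s^{-p}g(x,s)\,ds
\]
diverges in general. The condition \ainc{p-1} only provides the \emph{upper} bound $g(x,s)\lesssim(s/t)^{p-1}g(x,t)$ for $s\le t$, so nothing prevents $g(x,s)\approx s^{p-1}$ near the origin—the model case $A(x,\xi)=|\xi|^{p-2}\xi$ realizes this exactly. Then $s^{-p}g(x,s)\approx s^{-1}$, the integral is logarithmically divergent, and $\phi'$ is not finite. Moreover, even if one replaces your formula by the convergent Hardy mean $\Psi(x,t):=\tfrac1t\int_0^t s^{-(p-1)}g(x,s)\,ds$, $\phi'(x,t):=t^{p-1}\Psi(x,t)$, the ratio $\Psi$ is still \emph{not} guaranteed to be increasing: $\Psi'(t)=\tfrac1t(\psi(t)-\Psi(t))$ with $\psi(t):=t^{-(p-1)}g(x,t)$, and almost-monotonicity with constant $L>1$ only gives $\Psi\le L\psi$, so $\Psi'$ can be negative. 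The correct construction is a two-step monotonization: first take $\psi^\uparrow(x,t):=\sup_{0<s\le t}s^{-(p-1)}g(x,s)$, which is genuinely increasing, remains comparable to $t^{-(p-1)}g(x,t)$, and still satisfies \adec{q-p} with the same constant; \emph{then} smooth by $\Psi(x,t):=\tfrac1t\int_0^t\psi^\uparrow(x,s)\,ds$, which is continuous, increasing (since $\psi^\uparrow$ is), and comparable to $\psi^\uparrow$. Only with this order of operations does the computation $t\Psi'/\Psi=\psi^\uparrow/\Psi-1\le C(L,q-p)-1$ close, giving \dec{q_1-1} with $q_1$ depending only on $n,p,q,L$.
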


By the monotonicity of the \adec{}-condition, we can replace our original $q$ by $q_1$ from 
the proposition above and assume without loss of generality that $q_1=q$. 

From growth and ellipticity inequalities of growth functions in 
Proposition~\ref{prop:growthfunction}, a standard calculation yields the 
following monotonicity and coercivity/growth properties: 
\[
\big(A(x,\xi_1) - A(x,\xi_2) \big)\cdot (
\xi_1-\xi_2) \gtrsim \frac{\phi'(|\xi_1|+|\xi_2|)}{|\xi_1|+|\xi_2|} |\xi_1-\xi_2|^2
\]
and
\[ 
c_*^{-1} \phi(x,|\xi|) \le \xi \cdot A(x,\xi) \le |\xi| |A(x,\xi)| \le c_* \phi(x,|\xi|)
\]
for some $c_*=c_*(n,p,q,L)\ge1$. In particular, this implies that the function space associated with the local weak solutions to \eqref{maineq} or \eqref{maineq1}
is the Sobolev space $W^{1,\phi}_{\loc}(\Omega)$, and that $\phi$ satisfies \aone{} or \SA{} if and only if $A^{(-1)}$ does. 
When this is the case, we can take $v\in W^{1,\phi}(\Omega)$ with $\mathrm{supp}\, v \Subset \Omega$ as a test function in the weak form of \eqref{maineq1} by approximation (see \cite[Section 4.4]{HarH19}).

We state the higher integrability result for the weak solutions to \eqref{maineq1} which 
requires the \aone{} condition of $A^{(-1)}$. The homogeneous case, when $F\equiv 0$, can be found in \cite[Theorem 4.1]{HasO23}; see also \cite[Lemma 4.7]{HasO22} and references therein. 
Essentially the same argument can be applied to the nonhomogeneous case, so it is not repeated here. 

\begin{lemma}\label{lem:high}
Suppose 
that $A^{(-1)}$ satisfies \aone{}. Let $u\in W^{1,\phi}_{\loc}(\Omega)$ be a weak solution to \eqref{maineq1} with $\phi(\cdot,|F|)\in L^{s}_{\loc}(\Omega)$ for some $s>1$. There exists $\sigma=\sigma(n,p,q,L,s)>0$ such that $\phi(\cdot, |Du|)\in L^{1+\sigma}_{\loc}(\Omega)$ and 
\[
\fint_{B_r} \phi(x,|Du|)^{1+\sigma}\,dx \le c \left\{ \phi_{B_{2r}}^-\left( \fint_{B_{2r}} |Du|\,dx\right)^{1+\sigma} + \fint_{B_{2r}}\phi(x,|F|)^{1+\sigma}\, dx +1\right\}
\]
for some $c=c(n,p,q,L,s)\ge 1$, whenever $B_{2r}\Subset\Omega$ and 
$\varrho_{L^\phi(B_{2r})}(|Du|)\le 1$. 
\end{lemma}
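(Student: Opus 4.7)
My proof plan follows the standard Gehring strategy adapted to generalized Orlicz spaces, treating the nonhomogeneous term parallel to the gradient term.

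\textbf{Step 1: Caccioppoli with the source term.} The plan is to test the weak formulation of \eqref{maineq1} with $\eta^q (u - (u)_{B_{2r}})$, where $\eta\in C^\infty_0(B_{2r})$ is a standard cutoff with $\eta\equiv 1$ on $B_r$ and $|D\eta|\lesssim 1/r$. Using the coercivity $\xi\cdot A(x,\xi)\gtrsim \phi(x,|\xi|)$ on the left and the growth $|A(x,F)|\lesssim \phi(x,|F|)/|F|$ on the right, together with Young's inequality of the form $ab\le \epsilon\phi(x,a)+c_\epsilon\phi^*(x,b)$ to absorb the cross terms, one arrives at a Caccioppoli-type inequality
\[
\fint_{B_r}\phi(x,|Du|)\,dx \lesssim \fint_{B_{2r}}\phi\!\left(x,\tfrac{|u-(u)_{B_{2r}}|}{r}\right)dx + \fint_{B_{2r}}\phi(x,|F|)\,dx + 1,
\]
where the additive $1$ accommodates the shift arising when invoking \azero{} of $\phi$.

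\textbf{Step 2: Localization via \aone{} and Sobolev--Poincar\'e.} Here is where the hypothesis $\varrho_{L^\phi(B_{2r})}(|Du|)\le 1$ is essential: it guarantees the pointwise values involved lie in the range $\phi(y,t)\le |B_{2r}|^{-1}$ where \aone{} is applicable. The next step is to replace $\phi(x,\cdot)$ by $\phi^-_{B_{2r}}(\cdot)$ on the right-hand side using \aone{}, then apply the generalized Sobolev--Poincar\'e inequality for $\phi^-_{B_{2r}}$ (an $N$-function in a single variable) to bound the $u-(u)_{B_{2r}}$ term by $(\fint_{B_{2r}}|Du|\,dx)^{q_0}$ with some Sobolev exponent $q_0<1$. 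After another application of \aone{} in reverse to pass back from $\phi^-_{B_{2r}}$ to $\phi(x,\cdot)$, and re-absorbing lower order shifts into the additive constant, this yields a \emph{reverse H\"older inequality}
\[
\fint_{B_r}\phi(x,|Du|)\,dx \le c\left(\fint_{B_{2r}}\phi(x,|Du|)^{q_0}\,dx\right)^{1/q_0}+c\fint_{B_{2r}}\phi(x,|F|)\,dx + c,
\]
valid on all sub-balls with $\varrho_{L^\phi}\le 1$.

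\textbf{Step 3: Gehring's lemma.} With the reverse H\"older inequality in hand, I would invoke the Gehring--Giaquinta--Modica self-improving lemma (in its version handling an inhomogeneity $\phi(x,|F|)\in L^s$), which yields an exponent $\sigma=\sigma(n,p,q,L,s)>0$ for which $\phi(\cdot,|Du|)\in L^{1+\sigma}_\loc$ and produces the stated estimate, where the first term on the right-hand side emerges by combining the reverse H\"older estimate with Jensen's inequality on $\phi^-_{B_{2r}}$ applied to $\fint|Du|\,dx$.

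\textbf{Main obstacle.} The genuine difficulty is Step 2: under a mere \aone{}-type condition (not a global doubling on $\phi$), one cannot freely swap $\phi(x,\cdot)$ with $\phi(y,\cdot)$, so the normalization $\varrho_{L^\phi(B_{2r})}(|Du|)\le 1$ has to be tracked carefully to keep all arguments inside the admissible range $\phi(\cdot,t)\lesssim |B_{2r}|^{-1}$. Handling the values of $|Du|$ above this threshold requires the truncation/splicing trick from \cite{HasO22}, which is precisely why the homogeneous argument referenced in that paper transfers to the nonhomogeneous case with only the replacement of the right-hand side involving $F$ and the additive constant $1$.
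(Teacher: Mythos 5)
The paper does not give a self-contained proof of this lemma: it cites the homogeneous case from \cite[Theorem~4.1]{HasO23} and \cite[Lemma~4.7]{HasO22} and states that ``essentially the same argument'' handles the source term $F$. Your outline (Caccioppoli with the $A(\cdot,F)$ term absorbed via Young's inequality and $\phi^*(x,\phi'(x,t))\lesssim\phi(x,t)$, localization of $\phi$ via \aone{} and the normalization $\varrho_{L^\phi(B_{2r})}(|Du|)\le 1$, Sobolev--Poincar\'e for $\phi^-_{B_{2r}}$ to get a reverse H\"older inequality, then Gehring) is exactly the strategy those references employ, including the key difficulty you identify of keeping the argument inside the admissible range for \aone{}; so your proposal matches the approach the paper implicitly relies on.
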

 
We establish an approximating autonomous problem for \eqref{maineq}. 
Denote the averages of $A(\cdot,\xi)$, $\phi(\cdot,t)$ and $\phi'(\cdot,t)$ over $B_r\subset\Omega$ by 
$A_{B_r}$,
$\phi_{B_r}$ and $\phi'_{B_r}$, and set
\begin{equation}\label{def:t0}
t_0:=2(\phi_{B_r})^{-1}(c_*|B_r|^{-1}),
\end{equation}
where $c_*\ge 1$ is as above.
We first define
\[
\psi'(t):=
\begin{cases}
\phi_{B_r}'(t)&\text{if}\ \ t\leq t_0,\\
\phi_{B_r}' (t_0)\, (\frac t{t_0})^{p-1}&\text{if} \ \ t_0< t.
\end{cases}
\]
Note that $\phi'_{B_r}$ is the same as the derivative of $\phi_{B_r}$ and $\psi'$ is continuous since $\phi'$ satisfies \inc{p-1} and \dec{q-1}. 
We consider
\[
\psi(t):= \int_0^t \psi'(s)\, ds.
\]
We see that 
$\psi=\phi_{B_r}$ in $[0, t_0]$ and, since $\phi_{B_r}'$ satisfies \inc{p-1}, $\psi\le \phi_{B_r}$ 
in $[t_0,\infty)$.
Fix $\eta \in C^\infty_0(\R)$ with $\eta\ge0$, $\supp \eta \subset (0,1)$ and $\|\eta\|_1=1$.
We define 
\[
\tphi(t)
:=
\int_0^\infty \psi(s) \eta_{rt}(s-t)\,ds
\quad\text{where}\quad
\eta_r(t):=\tfrac1r \eta(\tfrac tr).
\]
The construction of $\tphi$ is analogous to the one in \cite[Section~5]{HasO22} with $t_1=0$ and $t_2=t_0$, except that 
here we use the average $\phi_{B_r}$ instead of the function at the center-point, $\phi(x_0, \cdot)$. 
Therefore, we have the following analogue of Propositions~5.10 and 5.12 in \cite{HasO22}. 
%
%

\begin{proposition}\label{prop:approxProp}
Suppose 
that 
$\phi\in C^1([0,\infty))\cap C^2((0,\infty))$ satisfies \aone{} and that $\phi'$ satisfies \azero{}, \ainc{p-1} and \dec{q-1} for some $1<p \le q$. 
The following 
hold with $c\ge 1$ depending only on $n$, $p$, $q$ and $L$: 
\begin{enumerate}
\item $\psi(t) \leq \tphi(t)\leq (1+cr)\psi(t)\leq c \psi(t)$ for all $t>0$.
\item $\tphi\in C^1([0,\infty))\cap C^2((0,\infty))$ satisfies \azero{}, \inc{p} and \dec{q} while 
$\tphi'$ satisfies \azero{}, \inc{p-1} and \dec{q-1}.
\item $\tphi(t) \le c (\phi(x,t) +1)$ for all $(x,t)\in B_r\times[0,\infty)$. 
\item Let 
\[
\tilde \zeta(x,t):=
\begin{cases}
\phi(x,\tphi^{-1}(1))t, &0\le t<1,\\
\phi(x,\tphi^{-1}(t)),& t\ge 1,
\end{cases}
\]
and $\zeta(x,t):=\tilde \zeta(x,t)^{1+\tilde \sigma}$ for $\tilde\sigma>0$. Then $\zeta\in \Phiw(B_r)$ 
satisfies \azero{}, \aone{},
\ainc{1+\tilde\sigma} and \adec{q(1+\tilde \sigma)/p} with relevant constants depending on $n,p,q,L$ and $\tilde \sigma$.
\end{enumerate}
\end{proposition}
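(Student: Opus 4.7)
The plan is to follow the template of Proposition~5.10 in \cite{HasO22}, taking advantage of two simplifications: only the upper splicing threshold $t_0$ is active (since $t_1 = 0$), and the mollification is done around the ball average $\phi_{B_r}$ rather than the point value $\phi(x_0,\cdot)$. The substitution $v = (s-t)/(rt)$ yields the convenient reformulation
\[
\tphi(t) = \int_0^1 \psi((1+rv)t)\,\eta(v)\,dv,
\]
which exhibits $\tphi(t)$ as a smoothly weighted average of $\psi$ over $[t,(1+r)t]$ with a $t$-dependent window.

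For (1), the lower bound $\tphi(t) \ge \psi(t)$ is immediate from monotonicity of $\psi$ and $\|\eta\|_1 = 1$. The upper bound uses that $\psi$ satisfies \adec{q}, which is inherited from \dec{q-1} of $\phi'_{B_r}$ on $[0,t_0]$ via Proposition~\ref{prop0}(1) and from the explicit $p$-power construction on $[t_0,\infty)$ (using $p\le q$), giving $\psi((1+rv)t) \le L(1+r)^q \psi(t) \le (1+cr)\psi(t)$ for $r \le 1$. For (2), differentiating under the integral and transferring derivatives to $\eta$ by integration by parts in $v$ yields $C^\infty$-regularity on $(0,\infty)$; $C^1$-regularity at $0$ uses the $p$-power growth of $\psi$ near the origin. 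The \azero{}, \inc{p}, \dec{q} properties of $\tphi$ are inherited from $\psi$ via (1), and the analogous bounds on $\tphi'$ follow from Proposition~\ref{prop0}. For (3), by (1) it suffices to control $\psi$: on $[0,t_0]$ one has $\psi = \phi_{B_r}$, and the \aone{} condition on $\phi$ (equivalent to that on $A^{(-1)}$) combined with \azero{} yields $\phi_{B_r}(t) \lesssim \phi(x,t) + 1$; on $[t_0,\infty)$ the \aone{} condition applied at $t_0$ (where $\phi_{B_r}(t_0) \approx |B_r|^{-1}$ by construction) gives $\phi_{B_r}(t_0) \approx \phi(x,t_0)$, and \inc{p} of $\phi(x,\cdot)$ then gives $\psi(t) = \phi_{B_r}(t_0)(t/t_0)^p \lesssim \phi(x,t)$.

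The main obstacle is (4). Continuity of $\tilde\zeta$ at $t=1$ is immediate from the definition, and \azero{} of $\zeta$ follows from boundedness of $\tphi^{-1}(1)$ (via \azero{} and \inc{p}--\dec{q} of $\tphi$) together with the same for $\phi(x,\cdot)$. For \adec{q(1+\tilde\sigma)/p} it suffices to show $\tilde\zeta$ satisfies \adec{q/p}: \inc{p} of $\tphi$ gives that $\tphi^{-1}$ satisfies \adec{1/p}, and composing with \adec{q} of $\phi(x,\cdot)$ yields $\tilde\zeta(x,\Lambda t) \le L\Lambda^{q/p}\tilde\zeta(x,t)$ for $\Lambda \ge 1$. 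The \ainc{1+\tilde\sigma} step is the most delicate; after the substitution $s = \tphi^{-1}(t)$ it reduces to showing that $\phi(x,s)/\tphi(s)$ is almost increasing in $s$ for $s \ge \tphi^{-1}(1)$. I would split into two regimes: on $[\tphi^{-1}(1),t_0]$ the \aone{} condition applies at the threshold $\phi_{B_r}(s) \lesssim |B_r|^{-1}$ and yields $\phi(x,s) \approx \phi_{B_r}(s) \approx \tphi(s)$, so the ratio is comparable to a constant; on $[t_0,\infty)$ the explicit form of $\psi$ makes $\tphi(s)/s^p$ almost constant, while \inc{p} of $\phi(x,\cdot)$ makes $\phi(x,s)/s^p$ almost increasing, whence the ratio itself is almost increasing. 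The two regimes match across $s = t_0$ because at that point both give ratios of order one, using \aone{} at the threshold $\phi_{B_r}(t_0) \approx |B_r|^{-1}$.
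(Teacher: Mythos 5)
Your treatment of part (4), the substantive part of the statement, follows essentially the same route as the paper's: split at $t=\tphi(t_0)$ (equivalently $s=t_0$ after the substitution $s=\tphi^{-1}(t)$), use \aone{} on the left to get $\tilde\zeta(x,t)\approx t$, use the explicit $p$-power form of $\psi$ together with \ainc{p} of $\phi$ on the right, and glue via \aone{} at the threshold $t_0$. Your global derivation of \adec{q/p} by composing \dec{1/p} of $\tphi^{-1}$ with \adec{q} of $\phi(x,\cdot)$ is a small, clean simplification over the paper's argument, which re-derives it on each of the two intervals; it does require a brief separate check for the regime $t<1\le\Lambda t$, but that is trivial since $\tilde\zeta(x,t)\approx t$ there. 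For parts (1)--(3) you, like the paper, defer to \cite[Prop.\ 5.10]{HasO22} with the replacement of $\phi'(x_0,\cdot)$ by $\phi'_{B_r}$.

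There is, however, a gap in your sketch of (2). The proposition asserts \inc{p} and \dec{q} of $\tphi$ (and \inc{p-1}, \dec{q-1} of $\tphi'$), which are exact monotonicity conditions, not the relaxed ``almost'' versions. These do not follow from the two-sided comparison $\psi\le\tphi\le c\psi$ in part (1), which would only yield \ainc{p} and \adec{q}. One must instead argue directly from the reformulation you wrote: from $\tphi(t)=\int_0^1\psi((1+rv)t)\,\eta(v)\,dv$ one computes
\[
\frac{\tphi(t)}{t^p}=\int_0^1 (1+rv)^p\,\frac{\psi((1+rv)t)}{((1+rv)t)^p}\,\eta(v)\,dv,
\]
and since $\psi$ satisfies \inc{p} (inherited exactly from \inc{p-1} of $\phi'_{B_r}$ on $[0,t_0]$ and from the $p$-power extension on $[t_0,\infty)$, with continuous matching at $t_0$), each integrand is increasing in $t$ for fixed $v$, giving \inc{p} of $\tphi$; analogously for \dec{q} and for $\tphi'$. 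Relatedly, your appeal to Proposition~\ref{prop0} to get the bounds on $\tphi'$ goes the wrong way: part (1) of that proposition passes monotonicity from $\phi'$ to $\phi$, not from $\phi$ to $\phi'$. The properties of $\tphi'$ should be established from the mollification formula $\tphi'(t)=\int_0^1(1+rv)\psi'((1+rv)t)\,\eta(v)\,dv$, after which Proposition~\ref{prop0}(1) gives those of $\tphi$.
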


\begin{proof}
The proofs of (1)\,--\,(3) can be obtained from \cite[Proposition 5.10]{HasO22} when 
we replace $\phi'(x_0,t)$ by $\phi_{B_r}'(t)$. 

Let us prove (4) by following the proof of \cite[Proposition 5.12]{HasO22}.  Note that $\zeta\in \Phiw(B_r)$ is clear once we show 
\ainc{1}. As $\phi$ and $\tphi$ satisfy \azero{}, 
so does $\tilde\zeta$ and thus $\tilde\zeta(x,t)\approx t$ for $t\in [0, 1)$. 
Now we prove that $\tilde \zeta$ satisfies
\ainc{1} and \adec{q/p}, hence $\zeta$ satisfies
\ainc{1+\tilde \sigma} and \adec{q(1+\tilde \sigma)/p}. For $t\in[1,\tphi(t_0)]$, since $\phi^-_{B_{r}}(\tphi^{-1}(t)) \le
\phi_{B_r}(t_0) \lesssim |B_r|^{-1}$, the \aone{} condition of $\phi$ 
and (1) yield 
$\phi(x,\tphi^{-1}(t))\approx \phi_{B_r}(\tphi^{-1}(t))=\psi (\tphi^{-1}(t))\approx t $. 
Therefore $\tilde\zeta(x, t)\approx t$ in $[0,\tphi(t_0)]$. For $t\in [\tphi(t_0),\infty)$, 
setting $s:=\tphi^{-1}(t)$, 
\[
\frac{\tilde\zeta(x,t)}{t} = \frac{\phi(x,\tphi^{-1}(t))}{t} 
\approx 
\frac{\phi(x,s)}{\psi(s)} 
\approx
\frac{t_0^{p-1}}{\phi_{B_r}'(t_0)} \frac{\phi(x,s)}{s^p} 
\]
and, similarly, 
\[
\frac{\tilde\zeta(x,t)}{t^{q/p}} \approx \left(\frac{t_0^{p-1}}{\phi_{B_r}'(t_0)} \right)^{q/p}\frac{\phi(x,s)}{s^q}. 
\]
Therefore, \ainc{p} and \dec{q} of $\phi$ imply 
\ainc{1} and \adec{q/p} of $\tilde\zeta$. Finally, we show that $\zeta$ satisfies \aone{}.
Let $B_{\rho}\subset B_r$, and assume that $\zeta^-_{B_\rho}(t)\le |B_{\rho}|^{-1}$. 
Then 
\[
\tilde \zeta^-_{B_\rho}(t) 
=\phi^-_{B_\rho}(\tphi^{-1}(t)) 
\le |B_{\rho}|^{-1/(1+\tilde \sigma)} \le 
|B_{\rho}|^{-1}.
\]
Therefore, \aone{} of $\phi$ implies that 
\[
\zeta^+_{B_\rho}(t) 
=[\phi^+_{B_\rho}(\tphi^{-1}(t))]^{1+\tilde\sigma} 
\lesssim [\phi^-_{B_\rho}(\tphi^{-1}(t))]^{1+\tilde \sigma}
= \zeta^-_{B_\rho}(t) 
\]
and so $\zeta$ satisfies \aone{}.
\end{proof}

From the nonlinearity $A$, we define an autonomous function $\tilde A:\R^n\to\R^n$ as
\[
\tA(\xi):= \eta_1(|\xi|)A_{B_r}(\xi) + \eta_2(|\xi|)\frac{\phi_{B_r}'(t_0)}{t_0^{p-1}}|\xi|^{p-2}\xi,
\]
where $\eta_1\in C^\infty([0,\infty))$ is such that 
$0\le\eta_1 \le 1$, $\eta_1\equiv 1$ on $[0,t_0]$, 
$\eta_1 \equiv 0$ on $[2t_0,\infty)$ and $-2/t_0 \le \eta_1'\le 0$, and $\eta_2\in C^\infty([0,\infty))$ 
is such that $0\le\eta_2\le1$, $\eta_2\equiv 0$ on $[0,t_0/2]$, $\eta_2\equiv 1$ on $[t_0,\infty)$ and 
$0 \le \eta_2'\le 4/t_0$. Then by the same computations as in 
\cite[Lemma 5.2]{HasO22b} with $t_1=0$, 
$t_2=t_0$ and replacing $\eta_2$ and $\eta_3$ by the functions $\eta_1$ and $\eta_2$ defined above, we can show that $\tphi$ defined above is a growth function of $\tA$, i.e.
\begin{equation}\label{tphi-growth}
\tilde L^{-1}(|\tA(\xi)| + |\xi| |D \tA(\xi)|) \le \tphi'(|\xi|) 
\le \tilde L\, |\xi|\, D \tA(\xi) e \cdot e
\end{equation}
for some $\tilde L\ge 1$ and every $\xi,e\in \R^n$ with $\xi\neq 0$ and $|e|=1$.

\subsection{Approximation and comparison estimates} \label{subsect:approximation}

Let $u \in W^{1,\phi}_{\loc}(\Omega)$ be a local weak solution to \eqref{maineq}. 
Fix $\Omega'\Subset\Omega$. Then, by Lemma~\ref{lem:high}, $\phi(\cdot,|Du|)\in L^{1+\sigma}(\Omega')$. Assume that $r$ satisfies 
\begin{equation}\label{eq:rrestriction1}
r\in (0, \tfrac{1}{2}]
\quad\text{and}\quad
|B_{2r}| \le 2^{-\frac{2(1+\sigma)}{\sigma}}\left(\int_{\Omega'}\phi(x,|Du|)^{1+\sigma}\,dx+1\right)^{- \frac{2+\sigma}{\sigma}}\le \frac{1}{2}.
\end{equation}
By H\"older's inequality this gives  
\[
\int_{B_{2r}} \phi(x,|Du|)^{1+\frac{\sigma}{2}}\,dx 
\le |B_{2r}|^{\frac{\sigma}{2(1+\sigma)}} \left(\int_{\Omega'}\phi(x,|Du|)^{1+\sigma}\,dx\right)^{\frac{2+\sigma}{2(1+\sigma)}} \le \frac{1}{2}
\]
so that 
\[
\varrho_{L^\phi(B_{2r})}(|Du|)
\le \int_{B_{2r}} \phi(x,|Du|)^{1+\frac{\sigma}{2}} \,dx +|B_r| \le 1.
\]
We fix $B_{2r}\Subset \Omega'$, and let $\tu\in u+W^{1,\tphi}_0(B_r)$ be the unique weak solution to
\begin{equation}\tag{$\div\tA$}\label{eqv}
\div \tA(D\tu) =0 
\end{equation}
in $B_r\subset\Omega$. 
The solutions $u$ and $\tu$ of \eqref{maineq} and \eqref{eqv} satisfy the following estimates. 

\begin{lemma}
\label{lem:GreverseDu}
Suppose 
that $A^{(-1)}$ satisfies \aone{} and $r$ satisfies 
\eqref{eq:rrestriction1} and let $\sigma>0$ be from Lemma~\ref{lem:high}.
There exists $C_0\geq 1$ depending only on $n$, $p$, $q$ and $L$ such that 
\begin{enumerate}
\item
$\displaystyle\int_{B_r} \tphi(|D\tu|) \, dx \le C_0 \int_{B_r} \tphi(|Du|) \, dx$,
\smallskip
\item
$\displaystyle\bigg(\fint_{B_r}\phi(x,|Du|)^{1+\sigma}\,dx\bigg)^{\frac{1}{1+\sigma}} 
\le 
C_0\tphi \bigg(\fint_{B_{2r}}|Du|\,dx\bigg)+C_0$,
\smallskip
\item
$\displaystyle\fint_{B_r}|D\tu|\,dx 
\le 
C_0 \fint_{B_{2r}}|Du|\,dx+C_0$,
\smallskip
\item 
if $\varrho_{L^{\phi^{1+\tilde\sigma}}(B_r)}(|Du|)\le M_0<\infty$ for $\tilde\sigma>0$, then
\[
\fint_{B_r}\phi(x,|D\tu|)^{1+\tilde \sigma}\,dx
\le 
C_0 \big(M_0^{\frac{q}{p}-1}+1\big) \left(\fint_{B_{2r}}\phi(x,|Du|)^{1+\tilde\sigma}\,dx+1\right),
\]
where the constant $C_0$ depends also on $\tilde\sigma$.
\end{enumerate}
\end{lemma}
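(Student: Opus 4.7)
The four parts rest on testing the equations for $u$ and $\tu$ against $u-\tu$, together with the growth/ellipticity inequalities \eqref{phi-growth} and \eqref{tphi-growth}. My plan is to prove the items in order: (1) is a Caccioppoli-type energy estimate, (2) repackages Lemma~\ref{lem:high}, (3) follows from (1)\,--\,(2) plus a reverse-Jensen step, and (4) combines Gehring-type higher integrability for $\tu$ with the function $\zeta$ of Proposition~\ref{prop:approxProp}(4).

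For (1) I use $\tu-u\in W^{1,\tphi}_0(B_r)$ as a test function in $\div\tA(D\tu)=0$, giving $\int_{B_r}\tA(D\tu)\cdot D\tu\,dx=\int_{B_r}\tA(D\tu)\cdot Du\,dx$. The coercivity $\tA(\xi)\cdot\xi\gtrsim \tphi(|\xi|)$ from \eqref{tphi-growth} controls the left-hand side. On the right, $|\tA(\xi)|\lesssim \tphi'(|\xi|)$, Young's inequality between $\tphi^*$ and $\tphi$, and $\tphi^*(\tphi'(t))\lesssim \tphi(t)$ from Proposition~\ref{prop0}(4) split the integrand into an absorbable $\varepsilon\,\tphi(|D\tu|)$ and a fixed $C_\varepsilon\,\tphi(|Du|)$.

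For (2), I take $F\equiv 0$ in Lemma~\ref{lem:high}, obtaining
\[
\fint_{B_r}\phi(x,|Du|)^{1+\sigma}\,dx\le c\Big\{\phi^-_{B_{2r}}\Big(\fint_{B_{2r}}|Du|\Big)^{1+\sigma}+1\Big\},
\]
take $(1+\sigma)$-th roots, and bound $\phi^-_{B_{2r}}(t)\lesssim \tphi(t)+1$ by using that $\psi=\phi_{B_r}\le \tphi$ on $[0,t_0]$ and that \aone{} of $\phi$ forces $\phi^-_{B_{2r}}\approx \phi^+_{B_{2r}}$ in the admissible range $\phi^-_{B_{2r}}\le|B_{2r}|^{-1}$, while for larger $t$ the normalisation $|B_r|^{-1}\approx \phi_{B_r}(t_0)$ absorbs the deficit into the $+1$. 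Item (3) then falls out: Jensen on the convex $\tphi$ together with (1) yields $\tphi\bigl(\fint_{B_r}|D\tu|\bigr)\le C_0\fint_{B_r}\tphi(|Du|)$; Hölder combined with Proposition~\ref{prop:approxProp}(3) gives $\fint_{B_r}\tphi(|Du|)\lesssim \bigl(\fint_{B_r}\phi(x,|Du|)^{1+\sigma}\bigr)^{1/(1+\sigma)}+1$, and (2) estimates this by $C\,\tphi\bigl(\fint_{B_{2r}}|Du|\bigr)+C$. Inverting $\tphi$ via \inc{p} and the almost-subadditivity of $\tphi^{-1}$ (from \adec{q}) linearises the resulting bound.

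Item (4) is the main obstacle because the variable weight $\phi(x,\cdot)$ differs from the autonomous $\tphi$ for $x\in B_r$, so one cannot simply copy Gehring's lemma for $\tu$. The plan is: first run a Gehring/reverse-Hölder step for $\div\tA(D\tu)=0$ to extract, for some $\tilde\sigma'>\tilde\sigma$, higher integrability of $\tphi(|D\tu|)$ on $B_r$ in terms of $\fint_{B_{3r/2}}\tphi(|D\tu|)$. Next, I rewrite $\phi(x,|D\tu|)^{1+\tilde\sigma}\approx \zeta(x,\tphi(|D\tu|))$ via Proposition~\ref{prop:approxProp}(4) and exploit \aone{} and \adec{q(1+\tilde\sigma)/p} of $\zeta$ to pass from $\tphi$ back to $\phi$, at the cost of a factor controlled by $M_0^{q/p-1}$ coming from the mismatch between \inc{p} and \adec{q} on the range where $\phi(\cdot,|Du|)^{1+\tilde\sigma}$ has total mass $\le M_0$. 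Finally (1) lets me replace $\fint_{B_{3r/2}}\tphi(|D\tu|)$ by $\fint_{B_{3r/2}}\tphi(|Du|)\lesssim \fint_{B_{2r}}\phi(x,|Du|)+1$, and a final Hölder application with the $M_0$-bound produces the stated inequality.
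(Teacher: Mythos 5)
Your items (1)--(3) track the paper's argument closely, though with a notable imprecision in (2): you propose to bound $\phi^-_{B_{2r}}(t)\lesssim\tphi(t)+1$ as an inequality in $t$, with a hand-wave that "for larger $t$ the normalisation... absorbs the deficit into the $+1$." That is not true as stated: for $t\gg t_0$ the approximating function $\tphi$ grows only like $t^p$ while $\phi^-_{B_{2r}}$ can grow like $t^q$, so the gap is unbounded. What the paper actually does is first show, via Jensen, \aone{}, and the normalisation $\varrho_{L^\phi(B_{2r})}(|Du|)\le 1$ forced by \eqref{eq:rrestriction1}, that the specific value $t_1:=\fint_{B_{2r}}|Du|\,dx$ satisfies $t_1\lesssim t_0$; only then does it use $\phi^-_{B_{2r}}(t_1)\le\phi_{B_r}(t_1)=\psi(t_1)\approx\tphi(t_1)$, which holds because $\psi=\phi_{B_r}$ on $[0,t_0]$. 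You need this step explicitly, not a uniform-in-$t$ inequality.

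Item (4) contains the real gap. You want to use an interior Gehring/reverse-H\"older step for $\tu$ (note, by the way, $\tu$ is only defined on $B_r$, so "$\fint_{B_{3r/2}}\tphi(|D\tu|)$" is not meaningful and you would need a boundary version with data $u$), and then "pass from $\tphi$ back to $\phi$" via $\zeta$. This cannot close: Gehring gives an improvement by some small fixed $\tilde\sigma'>0$, whereas $\zeta$ satisfies only \ainc{1+\tilde\sigma} and \adec{q(1+\tilde\sigma)/p}, and the exponent $q(1+\tilde\sigma)/p$ can be arbitrarily far from $1$ when $q/p$ is large. So mere self-improvement of $\tphi(|D\tu|)$ does not control $\int_{B_r}\zeta(x,\tphi(|D\tu|))\,dx$. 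What the paper does instead is invoke a genuine Calder\'on--Zygmund estimate up to the boundary for the autonomous Dirichlet problem $\div\tA(D\tu)=0$, $\tu=u$ on $\partial B_r$, with the general $\Phi$-function weight $\zeta$ of Proposition~\ref{prop:approxProp}(4) (citing Lemma~4.15 of \cite{HasO22b}). That result transfers the $\zeta$-integrability of the boundary data $Du$ to $D\tu$ directly, producing the $M_0^{q/p-1}$ factor from the \adec{q(1+\tilde\sigma)/p}-structure of $\zeta$ via the constraint $\varrho_{L^{\phi^{1+\tilde\sigma}}(B_r)}(|Du|)\le M_0$. Without that CZ estimate for arbitrary $\Phi$-function weights, your outline does not reach the claimed inequality.
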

\begin{proof}
Testing \eqref{eqv} by $\tu-u\in W^{1,\tphi}_0(B_{2r})$ and using that 
$\tphi\approx |\tA^{(-1)}|$ by \eqref{tphi-growth} and Young's inequality, we obtain (1). Note that this part does not require \aone{}.

Let $t_1:=\fint_{B_{2r}}|Du|\,dx$. Since $\varrho_{L^\phi(B_{2r})}(|Du|)\le 1$ by the choice of $r$, 
we conclude from Jensen's inequality and \aone{} of $\phi$ that 
\[
t_1 \le
(\phi^-_{B_{2r}})^{-1}\left(\fint_{B_{2r}}\phi^-_{B_{2r}}(|Du|)\,dx\right) \lesssim \phi_{B_r}^{-1}\left(|B_{2r}|^{-1}\right) \lesssim t_0.
\]
Hence, by the definition of $\psi$ and Proposition~\ref{prop:approxProp}(1), $\phi^-_{B_{2r}}(t_1) \le \phi_{B_r} ( t_1) \approx \psi ( t_1) \approx \tphi (t_1)$. 
Then, (2) follows from Lemma~\ref{lem:high}, when $F\equiv 0$, and the fact that $\phi^-_{B_{2r}}(t_1)\lesssim \tphi(t_1)$.
Moreover, using the two estimates we have shown and Proposition~\ref{prop:approxProp}(3), we also obtain
\[
\tphi\left(\fint_{B_r}|D\tu|\,dx \right)
\lesssim
\fint_{B_{r}}\tphi(|Du|)\,dx 
\lesssim 
 \fint_{B_{r}}\phi(x,|Du|)\,dx +1
 \lesssim 
\tphi\left( \fint_{B_{2r}}|Du|\,dx +1\right)
\]
which implies (3).
Finally, (4) follows the Calder\'on--Zygmund estimate in \cite[Lemma~4.15]{HasO22b} with $\phi$ and $\theta$ replaced by $\tphi$ and $\zeta$ given in 
Proposition~\ref{prop:approxProp}(4):
\begin{align*}
\fint_{B_r}\phi(x,|D\tu|)^{1+\tilde\sigma}\,dx
&\lesssim \fint_{B_r}\tilde\zeta(x,\tphi(|D\tu|))^{1+\tilde\sigma}\,dx +1\\
&\lesssim \big(M_0^{\frac{q}{p}-1}+1\big)\left(\fint_{B_r}\tilde\zeta(x,\tphi(|Du|))^{1+\tilde\sigma}\,dx +1\right)\\
&\lesssim \big(M_0^{\frac{q}{p}-1}+1\big) \left(\fint_{B_r}\phi(x,|Du|)^{1+\tilde\sigma}\,dx +1\right). \qedhere
\end{align*}
\end{proof}


We will use the following supremum and excess decay estimates for the 
derivative $D\tu$ of the solution to \eqref{eqv} in $L^1$-space. This is an improvement by Antonini of 
older results from \cite{Lie91,Lie92}. 

\begin{lemma}[Theorem~4.1, \cite{Ant_pp}]\label{lem:v_regularity} Let $\tA:\R^n\to\R^n$ satisfy the quasi-isotropic $(p,q)$-growth condition and $\tphi\in C^1([0,\infty))\cap C^2((0,\infty))$ be its growth function satisfying \eqref{tphi-growth}. 
Let $\tu\in W^{1,\tphi}(B_r)$ be a weak solution to \eqref{eqv}.
There exist $\alpha\in (0,1)$ and $C_1 \ge 1$ depending on 
$n$, $p$, $q$ and $\tilde L$ such that, for any $B_{\nu}(y)\subset B_\rho(y)\subset B_r$, 
\[
\| D\tu\|_{L^\infty(B_{\rho/2}(y))} \le 
C_1 \fint_{B_\rho(y)} |D\tu| \,dx, 
\]
and
\[
\fint_{B_{\nu}(y)} |D\tu-(D\tu)_{B_\nu(y)}| \,dx 
\le \osc_{B_\nu(y)} D\tu 
\le 
C_1 \Big(\frac \nu\rho\Big)^{\alpha} \fint_{B_\rho(y)} |D\tu-(D\tu)_{B_\rho(y)}| \,dx.
\]
\end{lemma}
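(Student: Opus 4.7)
The plan is to follow Lieberman's argument from \cite{Lie91} (with adjustments from \cite{Lie92}), which handles autonomous equations of $\tphi$-growth with pointwise uniform ellipticity exactly as in our setting. Lieberman obtains both the sup estimate and a $C^{1,\alpha}$-type excess decay, the latter stated in terms of the $L^{\tphi}$-excess of $D\tu$; the only modification needed here is to re-express the right-hand side in terms of the $L^1$-excess.

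\textbf{Sup estimate.} I would derive a Caccioppoli inequality for $(|D\tu|-k)_+$ by differentiating $\div\tA(D\tu)=0$ in the $x_\ell$-direction and testing against a suitable cutoff multiple of $D_\ell\tu$. Combined with \eqref{tphi-growth} and the \inc{p-1}, \dec{q-1} properties of $\tphi'$, a Moser / De Giorgi iteration in the $\tphi$-scale yields
\[
\|D\tu\|_{L^\infty(B_{\rho/2}(y))}\lesssim \tphi^{-1}\!\left(\fint_{B_\rho(y)}\tphi(|D\tu|)\,dx\right),
\]
and a standard reverse-Hölder argument replaces the right-hand side by $\fint_{B_\rho(y)}|D\tu|\,dx$.

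\textbf{Excess decay.} The first inequality $\fint_{B_\nu(y)}|D\tu-(D\tu)_{B_\nu(y)}|\,dx\le \osc_{B_\nu(y)} D\tu$ is immediate. For the second, differentiate $\div\tA(D\tu)=0$ in the $x_k$-direction to obtain a linear elliptic equation $\div(a(x)Dv)=0$ for $v:=D_k\tu$ with $a(x):=D_\xi\tA(D\tu(x))$. The quasi-isotropic ellipticity of $\tA$ forces the eigenvalue ratio of $a(x)$ to be bounded by a constant depending only on $\tilde L$, so De Giorgi--Nash--Moser theory applies. Crucially, for every constant $c\in\R$ the function $v-c$ solves the \emph{same} linear equation, so one obtains both a Moser bound $\|v-c\|_{L^\infty(B_{\rho/2}(y))}\le C\fint_{B_\rho(y)}|v-c|\,dx$ and an oscillation decay $\osc_{B_\nu(y)}(v-c)\le C(\nu/\rho)^\alpha\|v-c\|_{L^\infty(B_{\rho/2}(y))}$ for some $\alpha=\alpha(n,p,q,\tilde L)\in(0,1)$. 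Taking $c=v_{B_\rho(y)}$, noting $\osc_{B_\nu}(v-c)=\osc_{B_\nu}v$, and summing over $k=1,\dots,n$ with $|D_k\tu-(D_k\tu)_{B_\rho(y)}|\le|D\tu-(D\tu)_{B_\rho(y)}|$ yields the stated decay estimate for $D\tu$.

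\textbf{Main obstacle.} The technical heart is the linearization: the matrix $a(x)$ has bounded ellipticity \emph{ratio} but its absolute size $\tphi'(|D\tu(x)|)/|D\tu(x)|$ may degenerate where $D\tu$ is small. Lieberman handles this by splitting into a nondegenerate regime (where a frozen-coefficient comparison gives the decay) and a degenerate regime (where $\osc_{B_\rho} D\tu$ is already small compared to $\sup_{B_\rho}|D\tu|$ and the estimate is automatic). The genuinely new observation, compared to \cite{Lie91}, is the passage from the $L^{\tphi}$-excess to the $L^1$-excess on the right-hand side, which amounts to applying Moser's $L^\infty$--$L^1$ bound to the shifted solution $v-v_{B_\rho(y)}$ of the linear equation, as described above; the details are recorded in Appendix~\ref{app:A}.
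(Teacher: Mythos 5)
Your proposal takes essentially the same route as the paper: follow Lieberman's argument from \cite{Lie91} and \cite{Lie90}, obtain the sup-bound by a Caccioppoli/Moser iteration for the differentiated equation, and for the excess decay use Lieberman's dichotomy (a measure-theoretic density condition that puts you in either a nondegenerate regime, where the linearized equation is uniformly elliptic and De Giorgi theory gives oscillation decay, or a degenerate regime, where $\sup_{B_\nu}|D_i\tu|$ decays geometrically). Your identification of the one new ingredient — converting the $L^{\tphi}$-excess of \cite{Lie91} into an $L^1$-excess by inserting Moser's $L^\infty$--$L^1$ bound for $D_k\tu - (D_k\tu)_{B_\rho}$ in the nondegenerate branch — is exactly what the appendix does in \eqref{decay1}.

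Two points where your sketch is thinner than the paper's treatment, and which are not purely cosmetic. First, to differentiate $\div\tA(D\tu)=0$ at all, one needs $\tu$ to be twice weakly differentiable; the equation is degenerate at $D\tu=0$, so the paper first passes to a nondegenerate approximation $A_\epsilon$ with growth $\phi_\epsilon$ (and a smoothing to get $\tphi\in C^2$), proves the estimates uniformly in $\epsilon$, and then lets $\epsilon\to 0$. Your proposal silently assumes one can differentiate the original equation. Second, your description of the degenerate regime as "the estimate is automatic" glosses over the actual mechanism: when neither density condition holds, Lieberman's alternative gives geometric decay of $M(\nu_j):=\max_i\sup_{B_{\nu_j}}|D_i\tu|$, and one then has to convert that into decay of the $L^1$-excess $I(\nu_j)$, which in the appendix requires the additional case split on whether $|(Du_\epsilon)_{B_\rho}|$ is comparable to $M(\nu_{j_0})$ or not (leading to \eqref{decay2}). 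This conversion step, rather than the nondegenerate branch, is where the $L^\tphi\to L^1$ passage needs the most care.
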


Now we derive comparison estimates between the gradients of $u$ and $\tu$, in terms of 
the mean oscillation conditions in Definition~\ref{def:meanoscillation}.
The following first one corresponds to Lemma 3.1 in \cite{Bar23}. Note that the resulting estimates are almost the same. However, the proof of next lemma requires more delicate analysis since we consider general structure.
A similar inequality was 
proved in \cite[Lemma~6.2]{HasO22}, but with the much worse power 
$\omega^{p/q}$, even though there $\omega$ was a point-wise modulus of continuity, not mean continuity 
like here. 

\begin{lemma} \label{lem:comparison} 
In the setting of Lemma~\ref{lem:GreverseDu} with $\varrho_{L^{\phi^{\gamma/(\gamma-2)}}(B_r)}(|Du|)\le 1$ for some $\gamma>2$, 
\[
\fint_{B_r} \frac{\tphi'(|Du|+|D\tu|)}{|Du|+|D\tu|}|Du-D\tu|^2\,dx 
\le 
c\,\Theta(r)^2 \bigg(\fint_{B_r} \phi(x,|Du|)^{\frac\gamma{\gamma-2}}\,dx+1\bigg)^{\frac{\gamma-2}\gamma}
\]
for some $c=c(n,p,q,L,\gamma)\ge 1$, 
where $\Theta(r):= \omega(r)+\theta_\gamma(r) + r^{\alpha_1}$ with $\alpha_1=\frac{4n}{\gamma(\gamma-2)}$.
\end{lemma}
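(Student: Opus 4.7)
The plan is to test the weak formulations of \eqref{maineq} and \eqref{eqv} with the common admissible function $u - \tu \in W^{1,\tphi}_0(B_r)$ and subtract, obtaining
\[
\int_{B_r} (\tA(Du) - \tA(D\tu)) \cdot (Du - D\tu)\, dx
=
\int_{B_r} (\tA(Du) - A(x, Du)) \cdot (Du - D\tu)\, dx.
\]
The ellipticity inequality \eqref{tphi-growth} yields the monotonicity $(\tA(\xi) - \tA(\eta)) \cdot (\xi - \eta) \gtrsim \frac{\tphi'(|\xi|+|\eta|)}{|\xi|+|\eta|}|\xi-\eta|^2$, which bounds the left-hand side from below by $|B_r|$ times the target quantity. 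Writing $s := |Du|+|D\tu|$, I would next apply Young's inequality with weight $\sqrt{\tphi'(s)/s}$ on the right-hand side; the $\varepsilon$-portion absorbs into the left, reducing matters to estimating
\[
K := \fint_{B_r} \frac{s}{\tphi'(s)} |\tA(Du) - A(x, Du)|^2\, dx.
\]

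The main step is to bound $K$ by decomposing $B_r = E_1 \cup E_2$ with $E_1 := \{|Du| \le t_0\}$ and $E_2 := \{|Du| > t_0\}$, where $t_0$ is from \eqref{def:t0}. On $E_1$, the construction of $\tA$ gives $\tA(Du) = A_{B_r}(Du)$, and the choice of $t_0$ ensures $|A^{(-1)}_{B_r}(Du)| \approx \phi_{B_r}(|Du|) \lesssim |B_r|^{-1}$, placing $Du(x)$ in $D(B_r)$. Definition~\ref{def:meanoscillation} applied pointwise (and divided by $|\xi|$) then gives
\[
|A_{B_r}(Du) - A(x, Du)| \lesssim \ttheta(x, B_r)\bigl(\tphi'(|Du|) + \omega(r)\bigr).
\]
Using $\tphi'(|Du|) \le \tphi'(s)$ and the elementary identity $\tfrac{s\,\tphi'(|Du|)^2}{\tphi'(s)} \le s\,\tphi'(s) \approx \tphi(s)$ handles the principal contribution to $K|_{E_1}$, while the $\omega(r)$-tail is treated separately by distinguishing $s \le 1$ versus $s > 1$ via \azero{} and \dec{q-1} of $\tphi'$. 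H\"older's inequality with exponents $\gamma/2$ and $\gamma/(\gamma-2)$ then extracts $\theta_\gamma(r)^2$ multiplied by an $L^{\gamma/(\gamma-2)}$-average of $\tphi(s)$, which Proposition~\ref{prop:approxProp}(3) and Lemma~\ref{lem:GreverseDu}(4) with $M_0 = 1$ (available under the normalization $\varrho_{L^{\phi^{\gamma/(\gamma-2)}}(B_r)}(|Du|) \le 1$) convert into the claimed $\phi(x, |Du|)$-average.

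On $E_2$, where $\tA$ reduces to the $p$-Laplace extension, no direct oscillation control is available; instead I would exploit smallness of the measure. Chebyshev applied to $\varrho_{L^{\phi^{\gamma/(\gamma-2)}}(B_r)}(|Du|) \le 1$ together with $\phi_{B_r}(t_0) \approx |B_r|^{-1}$ yields $|E_2|/|B_r| \lesssim |B_r|^{2/(\gamma-2)}$. The growth bounds $|\tA(Du)| \lesssim \tphi'(|Du|)$ and $|A(x, Du)| \lesssim \phi'(x, |Du|)$, combined with appropriate Young's inequalities, produce an integrand on $E_2$ controlled by $\phi(x, |Du|) + \tphi(|D\tu|)$; H\"older at exponents $\gamma/2$ and $\gamma/(\gamma-2)$ then contributes a factor comparable to $|B_r|^{4/[\gamma(\gamma-2)]} \approx r^{\alpha_1}$, which is exactly the $r^{\alpha_1}$-piece of $\Theta(r)$. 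The higher integrability from Lemma~\ref{lem:high} is used to close the pairing with the correct exponent on the $\phi(x,|Du|)^{\gamma/(\gamma-2)}$-average.

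The main obstacle is obtaining the \emph{squared} exponent $\Theta(r)^2$ rather than the smaller power achieved in \cite[Lemma~6.2]{HasO22b}. The improvement relies on applying Young's inequality exactly once, so that a single power of $\tphi'(|Du|)$ is absorbed into the coercive term while the remaining $\tphi'(|Du|)$ pairs with $\ttheta^2$ through H\"older --- this is what forces the stronger integrability hypothesis $\varrho_{L^{\phi^{\gamma/(\gamma-2)}}(B_r)}(|Du|) \le 1$, which supplies the required H\"older conjugate. Ensuring that both the $\omega(r)$-tail in $K|_{E_1}$ and the crude estimate on $E_2$ respect this conjugate pairing is the delicate bookkeeping at the core of the argument.
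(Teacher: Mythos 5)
Your overall architecture matches the paper closely: test with $u-\tu$ and use monotonicity of $\tA$; split $B_r$ into a region where $Du$ is small enough for the oscillation quantity $\ttheta$ to apply and its complement; on the ``small'' region extract $\ttheta^2\tphi(s)$ and then $\theta_\gamma(r)^2$ via H\"older; on the ``large'' region exploit smallness (your Chebyshev route yields the same $r^{\alpha_1}$ as the paper's trick of inserting the factor $[|B_r|\phi^-_{B_r}(|Du|)]^{\tilde\sigma}\gtrsim 1$). Two issues, one genuine and one minor, arise from the details.

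The genuine gap is the upfront Young step with weight $\sqrt{\tphi'(s)/s}$, $s=|Du|+|D\tu|$. This reduces you to $K:=\fint_{B_r}\frac{s}{\tphi'(s)}|\tA(Du)-A(x,Du)|^2\,dx$, and on the good set the integrand of $K$ contains the term $\frac{s}{\tphi'(s)}\,\ttheta(x,B_r)^2\,\omega(r)^2$. But $\tphi'$ only satisfies \inc{p-1} and \dec{q-1}, so $\frac{s}{\tphi'(s)}\lesssim s^{2-q}$ for $s\le1$, which is unbounded whenever $q>2$ (and likewise $\lesssim s^{2-p}$ for $s\ge1$ when $p<2$). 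Your proposal to distinguish $s\le1$ from $s>1$ via \azero{} and \dec{q-1} does not repair this, because neither half produces a bound on $s/\tphi'(s)$; you are left with an integral that need not be controlled by $\tphi(s)$ or $\phi(x,|Du|)$. The paper avoids this entirely by not applying Young uniformly: it writes $|\tA(Du)-A(x,Du)|\lesssim \ttheta\,\tphi(s)^{1/2}\bigl[\tphi'(s)/s\bigr]^{1/2}+\ttheta\,\omega(r)$ and applies Cauchy--Schwarz only to the first summand (absorbing $\frac12\Psi$), while the $\omega$-tail is handled with the crude but safe bound $\ttheta\,\omega|Du-D\tu|\le(\ttheta^2+\omega^2)|Du-D\tu|\lesssim(\ttheta^2+\omega^2)\tphi(s+1)$, where the last step uses only $\azero$ and convexity. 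If you postpone the Young/Cauchy--Schwarz step until after you have already split off the $\omega$-tail and bound that tail by $\tphi(s+1)$ directly, the argument closes.

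The minor issue is the decomposition. You take $E_1=\{|Du|\le t_0\}$, but membership in the supremum set $D(B_r)=\{\xi:|A^{(-1)}_{B_r}(\xi)|\le|B_r|^{-1}\}$ requires $|A^{(-1)}_{B_r}(Du)|\le|B_r|^{-1}$, which by the definition of $t_0$ is equivalent roughly to $|Du|\le t_0/2$, not $|Du|\le t_0$. On the band $t_0/2\lesssim|Du|\le t_0$ the pointwise application of Definition~\ref{def:meanoscillation} is not directly available. The paper uses precisely $E=\{|A^{(-1)}_{B_r}(Du)|\le|B_r|^{-1}\}$; with that set one has both $Du\in D(B_r)$ and $|Du|\le t_0/2<t_0$, so $\tA(Du)=A_{B_r}(Du)$, and everything goes through cleanly. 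Adopt $E$ (or enlarge $D$ via \SA{} with a controlled constant) to close this.
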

\begin{proof}
Denote $\Psi:=\frac{\tphi'(|Du|+|D\tu|)}{|Du|+|D\tu|}|Du-D\tu|^2$. 
By the monotonicity of $\tA$ and testing with $u-\tu\in W^{1,\phi}_0 (B_r)\subset W^{1,\tphi}_0 (B_r)$ in the weak formulations of \eqref{maineq} and \eqref{eqv}, we have
\[\begin{split}
\fint_{B_r} \Psi\,dx
& \le c_0 \fint_{B_r} \big(\tA(Du) - \tA( D\tu)\big)\cdot (Du-D\tu)\, dx \\
& = c_0 \fint_{B_r}\big(\tA(Du) - A( x,Du)\big)\cdot (Du-D\tu)\, dx\\
& \le c_0 \fint_{B_r} \underbrace{\big|A(x,Du)-\tA(Du)\big| \, |Du-D\tu|}_{=:\Delta}\, dx.
\end{split}\]

In the set $E:= \{x\in B_r : |A^{(-1)}|_{B_r}(Du(x))
\leq |B_r|^{-1}\}$, since $\phi_{B_r}(|Du|) \le c_* |A^{(-1)}|_{B_r}(Du)
\le c_*|B_r|^{-1}$, it follows from \eqref{def:t0} that $|Du|\le \frac{t_0}2$ and so 
$\tA(Du)=A_{B_r}(Du)$. Hence by 
the definition of $\ttheta$ (Definition~\ref{def:meanoscillation} with $G=A^{(-1)}$) and the fact $\phi_{B_r}(|Du|)= \psi(|Du|)\le \tphi(|Du|)$ from Proposition~\ref{prop:approxProp}(1), 
we conclude that 
\[
\begin{split}
|\tA(Du)-A(x,Du)|
&\le \ttheta (x,B_r) (|A|_{B_r}(Du) + \omega(r))\\
&\lesssim \ttheta (x,B_r) ( \tphi'(|Du|+|D\tu|) + \omega(r)) \\
&\lesssim
\ttheta (x,B_r) \tphi(|Du|+|D\tu|)^{\frac{1}{2}} \bigg[\frac{\tphi'(|Du|+|D\tu|)}{|Du|+|D\tu|} \bigg]^{\frac{1}{2}}
+ \ttheta (x,B_r)\omega(r).
\end{split}
\]
It then follows from the Cauchy--Schwartz inequality that 
\[ \begin{split}
\Delta \chi_E 
&\le 
c \, \ttheta (x,B_r)^2 \tphi(|Du|+|D\tu|)
+ \frac1{2c_0} \Psi + \big(\ttheta (x,B_r)^2 + \omega(r)^2\big)|Du-D\tu|.
\end{split}\]
Integrating the previous inequality over $B_r$, we obtain that
\[ \begin{split}
\fint_{B_r}\Delta\,\chi_{E}\,dx 
\le
c\fint_{B_r} \big[\ttheta (x,B_r)^2+\omega(r)^2\big] \tphi(|Du|+|D\tu|+1)\, dx
+ \frac1{2c_0} \fint_{B_r}\Psi\, dx.
\end{split}\]
The first integral is estimated using H\"older's inequality, the definition 
of $\theta_\gamma$, Proposition~\ref{prop:approxProp}(3) and  Lemma~\ref{lem:GreverseDu}(4) with $\varrho_{L^{\phi^{\gamma/(\gamma-2)}}(B_r)}(|Du|)\le 1$:
\[
\begin{split}
&\fint_{B_r} \left[\ttheta (x,B_r)^2+\omega(r)^2\right] \tphi(|Du|+|D\tu|+1)\,dx \\
&\quad\le 
\bigg\{\bigg(\fint_{B_r} \ttheta (x,B_r)^{\gamma} \,dx\bigg)^{\frac{2}{\gamma}} + \omega(r)^2\bigg\} \bigg(\fint_{B_r} \tphi(|Du|+|D\tu|+1)^{\frac\gamma{\gamma-2}}\,dx\bigg)^{\frac{\gamma-2}\gamma} \\
&\quad\le 
c\, \big(\theta_\gamma(r)^2 +\omega(r)^2\big)\bigg(\fint_{B_r} \phi(x,|Du|)^{\frac\gamma{\gamma-2}}\,dx+1\bigg)^{\frac{\gamma-2}\gamma}. 
\end{split}\]
Hence we obtain that
\[
\fint_{B_r} \Delta\,\chi_{E}\,dx 
\le 
c \big(\theta_\gamma(r)^2+\omega(r)^2\big) \bigg(\fint_{B_r} \phi(x,|Du|)^{\frac\gamma{\gamma-2}}\,dx+1\bigg)^{\frac{\gamma-2}\gamma} 
+ \frac1{2c_0} \fint_{B_r} \Psi\,dx. 
\]

In the set $B_r\setminus E$, $\phi_{B_r}(|Du|)\gtrsim |B_{r}|^{-1}$. 
This implies $\phi^{-}_{B_r}(|Du|)\gtrsim |B_{r}|^{-1}$ since if $\phi^{-}_{B_r}(|Du|)\le |B_{r}|^{-1}$, by \aone{} of $\phi$, $|B_r|^{-1} \lesssim \phi_{B_r}(|Du|) \lesssim \phi^{-}_{B_r}(|Du|) +1 \lesssim \phi^{-}_{B_r}(|Du|)$. In the last estimate we used the fact that $1\lesssim |Du| \lesssim \phi^{-}_{B_r}(|Du|)$ in $B_r\setminus E$.
Furthermore, by 
Proposition~\ref{prop:approxProp}(3) and \azero{}, 
\[
\tphi'(|Du|) 
\approx \frac{\tphi(|Du|)}{|Du|}
\lesssim \frac{\phi(x,|Du|)+1}{|Du|}
\approx
\phi'(x,|Du|). 
\]
Using the growth conditions \eqref{phi-growth} and \eqref{tphi-growth},
the above inequalities 
and Young's inequality twice, we conclude that
\[\begin{aligned}
\Delta\,\chi_{B_r\setminus E}
& \lesssim \big[|B_{r}|\phi^-(|Du|)\big]^{\tilde\sigma} \phi'(x,|Du|) (|Du|+|D\tu|)\\
&\lesssim
r^{n\tilde\sigma} \big(\phi(x,|Du|)^{1+\tilde\sigma} + \phi(x,|Du|)^{\tilde\sigma} \phi(x,|D\tu|)\big)\\
&\lesssim
r^{n\tilde\sigma} \big(\phi(x,|Du|)^{1+\tilde\sigma}+ \phi(x,|D\tu|)^{1+\tilde\sigma}\big), 
\end{aligned}\]
where $\tilde\sigma := \frac{2}{\gamma-2}$. 
From Lemma~\ref{lem:GreverseDu}(4) with $\varrho_{L^{\phi^{\gamma/(\gamma-2)}}(B_r)}(|Du|)\le 1$ it follows that 
\[\begin{split}
\fint_{B_r} \Delta\,\chi_{B_r\setminus E}\, dx 
&\lesssim r^{n\tilde\sigma} 
\bigg(\fint_{B_r}\big[\phi(x,|Du|)^{1+\tilde\sigma} + \phi(x,|D\tu|)^{1+\tilde\sigma}\big]\,dx\bigg)^{\frac{1}{1+\tilde\sigma}+\frac{\tilde \sigma}{1+\tilde\sigma}}\\
&\lesssim r^{n(\tilde\sigma -\frac{\tilde\sigma}{1+\tilde\sigma})} \bigg( \fint_{B_{r}}\phi(x,|Du|)^{1+\tilde\sigma}\,dx+1\bigg)^{\frac{1}{1+\tilde \sigma}}. 
\end{split}\]
Since $1+\tilde\sigma=\frac\gamma{\gamma-2}$, we obtain the desired inequality 
by combining the estimates in $E$ and $B_r\setminus E$ and absorbing 
the integral over $\Psi$ into the left-hand side.
\end{proof}

Applying the same approach as in \cite[Corollary~6.3]{HasO22}, 
we obtain the following rough comparison estimates from the previous lemma with 
an unwanted exponent $\frac1q$ on $\Theta$.

\begin{lemma} \label{lem:comparison1} 
In the setting of Lemma~\ref{lem:comparison}, we further assume that 
\[
\bigg(\fint_{B_r} \phi(x,|Du|)^{\frac\gamma{\gamma-2}}\,dx\bigg)^{\frac{\gamma-2}\gamma} \le C^*_\gamma \,\tphi \bigg(\fint_{B_{2r}} |Du|\,dx+1\bigg),
\]
for some $C^*_\gamma>0$. Then there exists 
$C_2=C_2(n,p,q,L,\gamma,C^*_{\gamma})\ge 1$ such that 
\[\begin{split}
\fint_{B_r} |Du-D\tu|\,dx & \le \tphi^{-1}\bigg( \fint_{B_r} \tphi( |Du-D\tu| ) \,dx\bigg) 
 \le 
C_2 \, \Theta(r)^{\frac{1}{q}} \bigg(\fint_{B_{2r}} |Du| \,dx+1 \bigg).
\end{split}\]
\end{lemma}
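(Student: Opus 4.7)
The first inequality is Jensen's inequality applied to $\tphi$, which is convex since $\tphi'$ is increasing by Proposition~\ref{prop:approxProp}(2). For the second inequality, the plan is to control the $\tphi$-average of $|Du-D\tu|$ by the energy-type quantity on the left-hand side of Lemma~\ref{lem:comparison} via Cauchy--Schwarz, and then recover the $\Theta(r)^{1/q}$ factor by invoking the \dec{q} scaling of $\tphi$ through its inverse.

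First I would write $\tphi(|Du-D\tu|)\approx |Du-D\tu|\,\tphi'(|Du-D\tu|)$ from Proposition~\ref{prop0}(2), and use that $\tphi'$ is increasing together with $|Du-D\tu|\le |Du|+|D\tu|$ to obtain
\[
\tphi(|Du-D\tu|) \lesssim |Du-D\tu|\,\tphi'(|Du|+|D\tu|).
\]
Splitting $\tphi'(|Du|+|D\tu|) = \bigl[\tfrac{\tphi'(|Du|+|D\tu|)}{|Du|+|D\tu|}\bigr]^{1/2}\bigl[(|Du|+|D\tu|)\tphi'(|Du|+|D\tu|)\bigr]^{1/2}$, applying Cauchy--Schwarz, and using $\tphi(t)\approx t\tphi'(t)$ once more yields
\[
\fint_{B_r}\tphi(|Du-D\tu|)\,dx \;\lesssim\; I_1^{1/2}\, I_2^{1/2},
\]
where $I_1$ is the left-hand side of Lemma~\ref{lem:comparison} and $I_2 = \fint_{B_r}\tphi(|Du|+|D\tu|)\,dx$.

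Next, Lemma~\ref{lem:comparison} bounds $I_1^{1/2}$ by $\Theta(r)\bigl(\fint_{B_r}\phi(x,|Du|)^{\gamma/(\gamma-2)}\,dx+1\bigr)^{(\gamma-2)/(2\gamma)}$. For $I_2$, I would use Proposition~\ref{prop:approxProp}(3) to pass from $\tphi$ to $\phi$, H\"older's inequality to raise to the $\gamma/(\gamma-2)$-mean, and Lemma~\ref{lem:GreverseDu}(4) with $\tilde\sigma = 2/(\gamma-2)$ (so $1+\tilde\sigma=\gamma/(\gamma-2)$, and $M_0=1$ since $\varrho_{L^{\phi^{\gamma/(\gamma-2)}}(B_r)}(|Du|)\le 1$) to replace $\phi(\cdot,|D\tu|)$-integrals by $\phi(\cdot,|Du|)$-integrals. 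The added hypothesis then converts the resulting $\phi^{\gamma/(\gamma-2)}$-mean into $\tphi(\fint_{B_{2r}}|Du|+1)$, and \azero{} absorbs the additive constant. Combining the two factors gives
\[
\fint_{B_r}\tphi(|Du-D\tu|)\,dx \;\lesssim\; \Theta(r)\,\tphi\Bigl(\fint_{B_{2r}}|Du|\,dx+1\Bigr).
\]

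Finally, since $\tphi$ satisfies \dec{q}, its inverse $\tphi^{-1}$ satisfies $\tphi^{-1}(\lambda s) \le L\lambda^{1/q}\tphi^{-1}(s)$ for $\lambda\in(0,1]$; applying this with $\lambda$ proportional to $\Theta(r)$ (which we may assume $\le 1$, otherwise the target inequality is trivially absorbed into a larger $C_2$) produces the desired $\Theta(r)^{1/q}\bigl(\fint_{B_{2r}}|Du|\,dx+1\bigr)$. The main obstacle is the exponent bookkeeping around $\gamma$: one must verify that the choice $\tilde\sigma=2/(\gamma-2)$ is admissible in Lemma~\ref{lem:GreverseDu}(4) and that the resulting constants depend only on the declared parameters $n,p,q,L,\gamma,C^*_\gamma$. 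Once that is in place, the rest of the argument is a routine combination of Cauchy--Schwarz, the Orlicz identity $\tphi\approx t\tphi'$, and the scaling of $\tphi^{-1}$.
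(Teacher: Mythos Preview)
Your approach is correct and matches the paper's (which simply defers to \cite[Corollary~6.3]{HasO22}); the Cauchy--Schwarz splitting into $I_1^{1/2}I_2^{1/2}$ followed by the $\dec{q}$-scaling of $\tphi^{-1}$ is exactly the intended argument.

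There is one small gap in your handling of $I_2$. Invoking Lemma~\ref{lem:GreverseDu}(4) yields
\[
\fint_{B_r}\phi(x,|D\tu|)^{\frac{\gamma}{\gamma-2}}\,dx \lesssim \fint_{B_{2r}}\phi(x,|Du|)^{\frac{\gamma}{\gamma-2}}\,dx + 1,
\]
with the right-hand side over $B_{2r}$, whereas the added hypothesis of the lemma only controls the $\frac{\gamma}{\gamma-2}$-mean over $B_r$. The cleanest fix is to bypass Lemma~\ref{lem:GreverseDu}(4) altogether and use the more elementary Lemma~\ref{lem:GreverseDu}(1):
\[
I_2 \lesssim \fint_{B_r}\tphi(|Du|)\,dx + \fint_{B_r}\tphi(|D\tu|)\,dx \lesssim \fint_{B_r}\tphi(|Du|)\,dx,
\]
then Proposition~\ref{prop:approxProp}(3), H\"older, and the added hypothesis give $I_2 \lesssim \tphi\bigl(\fint_{B_{2r}}|Du|\,dx+1\bigr)$ directly, with all integrals over $B_r$ as required. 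With this replacement the rest of your argument goes through verbatim.
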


The next result is a sharper version of the previous lemma with better exponent of $\Theta$. This is analogous to Lemma 3.5 in \cite{Bar23}, but much simpler since we consider a single homogeneous equation. 

\begin{lemma} \label{lem:comparison2}
In the setting of Lemma~\ref{lem:comparison1}, there exists 
$C_3=C_3(n,p,q,L,\gamma,C^*_{\gamma})\ge 1$ such that 
if
\[
\frac{\lambda}{\Lambda} \le \inf_{B_\rho} |D\tu| 
\quad \text{and} \quad 
\fint_{B_{2r}} |Du| \, dx \le \lambda
\]
for some $\lambda, \Lambda\ge 1$ and $B_\rho\subset B_r$, then 
\[
\fint_{B_\rho} |Du-D\tu| \,dx \le C_3 \Lambda^{q-1} \Big(\frac r\rho\Big)^n \Theta(r) \lambda.
\]
\end{lemma}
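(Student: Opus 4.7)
\textbf{Proof plan for Lemma \ref{lem:comparison2}.} The plan is to convert the sharp weighted $L^2$-estimate of Lemma~\ref{lem:comparison} into the desired $L^1$-bound on $B_\rho$, exploiting the non-degeneracy $|D\tu|\ge \lambda/\Lambda$ to compensate for the extra power of the weight. The essential gain over Lemma~\ref{lem:comparison1} is that we end up with $\Theta(r)^1$ rather than $\Theta(r)^{1/q}$, which is what is eventually needed to run the Dini iteration.

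First, I would apply Lemma~\ref{lem:comparison} together with the assumed $C^*_\gamma$-bound to obtain
\[
\int_{B_r}\frac{\tphi'(|Du|+|D\tu|)}{|Du|+|D\tu|}|Du-D\tu|^2\,dx \le c\,\Theta(r)^2\,|B_r|\,\tphi(\lambda+1).
\]
Then split pointwise $|Du-D\tu| = (W^{-1})^{1/2}\cdot (W |Du-D\tu|^2)^{1/2}$ with $W:=\tphi'(|Du|+|D\tu|)/(|Du|+|D\tu|)$ and apply Cauchy--Schwarz on $B_\rho$:
\[
\fint_{B_\rho}|Du-D\tu|\,dx \le \bigg(\fint_{B_\rho} W^{-1}\,dx\bigg)^{\!1/2}\bigg(\fint_{B_\rho} W|Du-D\tu|^2\,dx\bigg)^{\!1/2}.
\]
Enlarging $B_\rho$ to $B_r$ in the second factor at the cost of $(|B_r|/|B_\rho|)^{1/2}=(r/\rho)^{n/2}$ and invoking the previous display bounds that factor by $c(r/\rho)^{n/2}\Theta(r)\tphi(\lambda+1)^{1/2}$.

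For the first factor, I would use Lemma~\ref{lem:v_regularity} together with Lemma~\ref{lem:GreverseDu}(3) to obtain the Lipschitz bound $\|D\tu\|_{L^\infty(B_{r/2})}\le c\lambda$ (the case $\rho>r/2$ is trivial since $(r/\rho)^n\approx 1$ and the result follows from Lemma~\ref{lem:comparison1} after absorbing the gap $\Theta^{1-1/q}\le 1$ into $\Lambda^{q-1}\ge 1$). Assuming $\rho\le r/2$, I would split $B_\rho = G\cup \mathcal B$ with $G:=\{|Du|\le M\lambda\}\cap B_\rho$ and $\mathcal B:=B_\rho\setminus G$ for a sufficiently large fixed $M=M(n,p,q,L)$. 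On $G$, one has $\lambda/\Lambda \le |Du|+|D\tu|\le (M+c)\lambda$, so the \inc{p-1} and \dec{q-1} properties of $\tphi'$ give the pointwise bound
\[
W^{-1} = \frac{|Du|+|D\tu|}{\tphi'(|Du|+|D\tu|)}\le c\,\Lambda^{q-2}\,\frac{\lambda}{\tphi'(\lambda)}\approx \Lambda^{q-2}\frac{\lambda^2}{\tphi(\lambda+1)}.
\]

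The main obstacle is the bad set $\mathcal B$, where $|Du|+|D\tu|$ may be large and $W^{-1}$ is not controlled by the expression above. I would address this by combining Chebyshev's inequality (using $\fint_{B_{2r}}|Du|\le \lambda$, which yields $|\mathcal B|\lesssim M^{-1}|B_r|$) with the higher integrability $\varrho_{L^{\phi^{\gamma/(\gamma-2)}}(B_r)}(|Du|)\le 1$ from the $C^*_\gamma$-hypothesis, so that the contribution of $\mathcal B$ to $\fint_{B_\rho}W^{-1}\,dx$ is absorbed into the good-set bound. Putting the two factors together and using $\tphi(\lambda+1)\approx \lambda\tphi'(\lambda)$, one arrives at
\[
\fint_{B_\rho}|Du-D\tu|\,dx \le c\,(r/\rho)^{n/2}\,\Lambda^{(q-2)/2}\,\Theta(r)\,\lambda \le C_3\,\Lambda^{q-1}\,(r/\rho)^n\,\Theta(r)\,\lambda,
\]
where the last inequality uses $(q-2)/2\le q-1$ and $n/2\le n$ together with $\Lambda,r/\rho\ge 1$; the stated form is weaker than what the argument actually produces, but matches the shape needed by the downstream iteration.
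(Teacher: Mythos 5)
Your overall strategy — exploit the nondegeneracy $\inf_{B_\rho}|D\tu|\ge\lambda/\Lambda$ to control the inverse weight and thereby convert the weighted $L^2$ bound of Lemma~\ref{lem:comparison} into an $L^1$ bound with full power $\Theta(r)$ — is the right instinct, but the Cauchy--Schwarz-on-integrals implementation runs into a genuine obstruction that you name but do not resolve, and the paper handles it in a cleaner way that bypasses it entirely.

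The crux is that the inverse weight $W^{-1}(t)=\dfrac{t}{\tphi'(t)}$ is \emph{not} monotone in $t$: when $p<2$ it grows roughly like $t^{2-p}$, so the pointwise lower bound $|Du|+|D\tu|\ge\lambda/\Lambda$ on $B_\rho$ gives you no pointwise upper bound on $W^{-1}$ where $|Du|$ is large. After Cauchy--Schwarz you are therefore stuck controlling $\fint_{B_\rho}W^{-1}\,dx$, and your sketch of the bad-set contribution (Chebyshev $+$ higher integrability) is not carried out. The balance is delicate: the good-set bound picks up a factor $(M+c)^{q-p}$ from \adec{q-1}, the bad-set bound picks up powers of $(r/\rho)^n/M$, and making these match forces $M$ to depend on $r/\rho$, which changes the exponent of $(r/\rho)$ in the final estimate; as written, the proposal never verifies that the end result fits within $(r/\rho)^n$. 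The paper avoids all of this: it applies the pointwise Young inequality $ab\le a^2+b^2$ with weight $\Theta(r)$ to split
\[
|Du-D\tu|\le \Theta(r)\big(|Du|+|D\tu|\big)+\frac1{\Theta(r)}\frac{|Du-D\tu|^2}{|Du|+|D\tu|},
\]
then factors the second term as $\dfrac{1}{\tphi'(|Du|+|D\tu|)}\cdot\dfrac{\tphi'(|Du|+|D\tu|)}{|Du|+|D\tu|}|Du-D\tu|^2$, and $t\mapsto\dfrac1{\tphi'(t)}$ \emph{is} monotone decreasing, so the nondegeneracy gives the uniform pointwise bound $\tphi'(|Du|+|D\tu|)^{-1}\le\tphi'(\lambda/\Lambda)^{-1}\le\Lambda^{q-1}/\tphi'(\lambda)$. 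One then enlarges $B_\rho$ to $B_r$, invokes Lemma~\ref{lem:comparison} with the $C^*_\gamma$-hypothesis, and uses $\tphi(\lambda+1)\approx\lambda\tphi'(\lambda)$. No bad set, no case split, no $L^\infty$ bound on $D\tu$.

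Two smaller but real errors in the proposal: first, your fallback for $\rho>r/2$ asserts that Lemma~\ref{lem:comparison1} already gives the conclusion because ``$\Theta^{1-1/q}\le 1$ can be absorbed into $\Lambda^{q-1}\ge1$,'' but this is backwards — for $\Theta<1$ one has $\Theta^{1/q}>\Theta$, so you would need $\Theta^{-(q-1)/q}\lesssim\Lambda^{q-1}$, which fails as $\Theta\to 0$ at fixed $\Lambda$. Second, your use of the Lipschitz bound $\|D\tu\|_{L^\infty(B_{r/2})}\lesssim\lambda$ forces the restriction $\rho\le r/2$, whereas the statement (and the paper's proof) allows any $B_\rho\subset B_r$; the paper's argument needs no sup-bound on $D\tu$ at all.
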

\begin{proof}
By the Cauchy--Schwartz inequality, 
\[\begin{split}
\fint_{B_\rho}|Du-D\tu|\,dx
\le
\Theta(r) \fint_{B_\rho}(|Du|+|D\tu|)\,dx + 
\frac1{\Theta(r)}\fint_{B_\rho} \frac{|Du-D\tu|^2}{|Du|+|D\tu|}\,dx.
\end{split}\]
By Lemma~\ref{lem:GreverseDu}(3),
the first integral on the right-hand side is bounded by $c(\frac r\rho)^n \lambda$. 
For the second integral, we use that $\tphi'$ is increasing and satisfies \dec{q-1}
and that $\frac\lambda\Lambda \le |D\tu|+|D u|$ in $B_\rho$, 
to estimate by Lemma~\ref{lem:comparison} and the assumption of Lemma~\ref{lem:comparison1} that
\begin{align*}
\Big(\frac \rho r\Big)^n \fint_{B_\rho} \frac{|Du-D\tu|^2}{|Du|+|D\tu|}\,dx
& \le
\frac{1}{\tphi'(\lambda/\Lambda)} \fint_{B_{r}} \frac{\tphi'(|Du|+|D\tu|)}{|Du|+|D\tu|}|Du-D\tu|^2\,dx\\
& \lesssim \Lambda^{q-1}\Theta(r)^2 \frac{1}{\tphi'(\lambda)} \tphi\bigg(\fint_{B_{2r}}|Du|\,dx\bigg) \\
& \approx \Lambda^{q-1} \Theta(r)^2 \lambda. \qedhere
\end{align*}
\end{proof}


\section{Calder\'on--Zygmund estimates}\label{sect:proofCZ}

In this section, we prove the Calder\'on--Zygmund estimates of Theorem~\ref{thm:CZ}. 
We assume throughout that $A:\Omega\times \R^n\to \R^n$ satisfies the quasi-isotropic $(p,q)$-growth condition in 
Definition~\ref{def:quasiisotropy} and \aone{}.
Let $u\in W^{1,\phi}_{\loc}(\Omega)$ be a weak solution to \eqref{maineq1}. We start with deriving comparison estimates.
Note that by Lemma~\ref{lem:high}, $\phi(\cdot,|Du|)\in L^{1+\sigma}_{\loc}(\Omega)$. Fix $B_{2r} \subset \Omega' \Subset \Omega$ and let $u_r \in u+W^{1,\phi}_0(B_{2r})$ be the unique weak solution to 
\eqref{maineq} in $B_{2r}$.
Note that the existence and uniqueness of the weak solution $u_r$ follow from standard arguments for monotone operators (see, e.g., \cite[Section 8]{HHK16}).
Then we have the following energy and higher integrability estimates for $Du_r$. 

\begin{lemma}\label{lem:CZstart}
In the setting of Proposition~\ref{prop:growthfunction}, 
\[
\int_{B_{2r}} \phi(x,|Du_r|) \, dx \le c \int_{B_{2r}} \phi(x,|Du|) \, dx.
\]
Suppose also that $A^{(-1)}$ satisfies \aone{}. Then there exist $C_4\ge 1$ and $\sigma_1\in (0,\sigma)$ depending on $n$, $p$, $q$ and $L$ such that 
\[
\fint_{B_{2r}} \phi(x,|Du_r|)^{1+\sigma_1} \, dx \le C_4 \left(\fint_{B_{2r}} \phi(x,|Du|)^{1+\sigma_1} \, dx +1 \right).
\]
\end{lemma}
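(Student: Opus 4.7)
For part (1), test the weak formulation of $\div A(\cdot, Du_r) = 0$ with the admissible function $u_r - u \in W^{1,\phi}_0(B_{2r})$, which gives
\[
\int_{B_{2r}} A(x, Du_r) \cdot Du_r\, dx = \int_{B_{2r}} A(x, Du_r) \cdot Du\, dx.
\]
The coercivity inequality $A(x, \xi) \cdot \xi \gtrsim \phi(x, |\xi|)$ from Proposition~\ref{prop:growthfunction} controls the left-hand side below by $c^{-1}\int_{B_{2r}} \phi(x, |Du_r|)\, dx$. For the right-hand side, combine $|A(x, \xi)| \lesssim \phi'(x, |\xi|)$ with Proposition~\ref{prop0}(2),(4) to obtain $\phi^*(x, |A(x, \xi)|) \lesssim \phi(x, |\xi|)$, then apply Young's inequality in the conjugate pair $(\phi, \phi^*)$ with a small scaling parameter $\epsilon$; exploiting \ainc{p} of $\phi$ (equivalently \adec{p'} of $\phi^*$) yields a bound of the form $c\epsilon^{p'}\int \phi(x, |Du_r|)\, dx + c\epsilon^{-q}\int \phi(x, |Du|)\, dx$. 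Taking $\epsilon$ small enough to absorb the first term produces (1); observe that this step does not use \aone{}.

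For part (2), the higher integrability on the full ball $B_{2r}$ (including the boundary) is obtained via Gehring's lemma applied to a reverse Hölder inequality valid on every ball $B_\rho(y)$ with $y \in \overline{B_{2r}}$. The starting point is a Caccioppoli-type estimate derived by testing $\div A(\cdot, Du_r) = 0$ against $\eta^q(u_r - u)$ for a cutoff $\eta$ supported in $B_{2\rho}(y)$. Crucially, this test function is admissible both for interior balls ($B_{2\rho}(y) \subset B_{2r}$) and for boundary balls ($B_{2\rho}(y)\not\subset B_{2r}$), because $u_r - u \in W^{1,\phi}_0(B_{2r})$ extends by zero to $\R^n$, so $\eta^q(u_r - u)$ lies in $W^{1,\phi}_0(B_{2r})$ uniformly. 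Applying the growth estimates, Young's inequality and absorption yields
\[
\int_{B_\rho(y)\cap B_{2r}} \phi(x, |Du_r|)\, dx \lesssim \int_{B_{2\rho}(y)\cap B_{2r}} \phi\bigl(x, \tfrac{|u_r - u|}{\rho}\bigr)\, dx + \int_{B_{2\rho}(y)\cap B_{2r}} \phi(x, |Du|)\, dx.
\]

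Combining this Caccioppoli inequality with the Sobolev--Poincaré inequality for $u_r - u$ (whose zero extension across $\partial B_{2r}$ enables the subunit Poincaré step on every such ball) and the \aone{} condition for $\phi$---used to switch between $\phi(x, \cdot)$ and $\phi^-_{B_{2\rho}(y)}(\cdot)$ with controlled error, exactly as in the proof of Lemma~\ref{lem:high}---produces a reverse Hölder-type inequality
\[
\fint_{B_\rho(y)\cap B_{2r}} \phi(x, |Du_r|)\, dx \le c\,\bigg(\fint_{B_{2\rho}(y)\cap B_{2r}} \phi(x, |Du_r|)^\theta\, dx\bigg)^{1/\theta} + c\fint_{B_{2\rho}(y)\cap B_{2r}} \phi(x, |Du|)\, dx + c
\]
for some $\theta \in (0, 1)$, uniformly in the ball. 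Gehring's lemma in the generalized Orlicz setting then furnishes $\sigma_1 \in (0, \sigma)$ and the claimed estimate. The main technical obstacle is the uniform handling of boundary balls; the \aone{} condition on $A^{(-1)}$ is essential here, since it allows the Sobolev--Poincaré step to pass between pointwise and averaged versions of $\phi$ without incurring constants depending on $r$ or the center $y$, paralleling its role in the interior case of Lemma~\ref{lem:high}.
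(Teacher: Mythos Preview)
Your proposal is correct and follows essentially the same approach as the paper's proof: testing the equation for $u_r$ with $u_r-u$ together with coercivity and Young's inequality for part (1), and a Caccioppoli/Sobolev--Poincar\'e/\aone{}/Gehring argument (handled uniformly up to the boundary via the zero extension of $u_r-u$) for part (2). The only organizational difference is that the paper cites the interior higher integrability from Lemma~\ref{lem:high} and a boundary analogue from \cite{BO16} and then covers, whereas you derive the reverse H\"older inequality directly on all balls and apply Gehring once; the underlying mathematics is identical.
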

\begin{proof}
The proofs of the estimates are quite standard; hence we only briefly sketch them. By testing \eqref{maineq}
with $u-u_r\in W^{1,\phi}_0(B_{2r})$ and using the coercivity property of growth functions and Young's inequality we obtain the first claim. 
We next prove the second estimate. From Lemma~\ref{lem:high},
\[
 \fint_{B_{r}} \phi(x,|Du_r|)^{1+\sigma} \, dx \le c \bigg\{\bigg(\fint_{B_{2r}} \phi(x,|Du_r|) \, dx\bigg)^{1+\sigma} +1 \bigg\}.
 \]
Using a similar approach as in the proof of Lemma~\ref{lem:high} (see, e.g., \cite[Lemma 3.5]{BO16}), we also find the following higher integrability on the boundary:
 \[\begin{split}
 \fint_{ B_{r}(y)\cap B_{2r}} & \phi(x,|Du_r|)^{1+\sigma_1} \, dx \\
& \le c \bigg\{\bigg(\fint_{B_{2r}(y)\cap B_{2r}} \phi(x,|Du_r|) \, dx\bigg)^{1+\sigma_1} + \fint_{B_{2r}(y)\cap B_{2r}} \phi(x,|Du|)^{1+\sigma_1} \, dx+1 \bigg\},
 \end{split}\]
where $y\in \partial B_{2r}$. The standard covering argument with the previous estimates 
implies the second claim. 
\end{proof}

Now, suppose that $r>0$ satisfies the following stronger version of \eqref{eq:rrestriction1}:
\begin{equation}\label{eq:rrestriction2}
r\le \frac{1}{2}
\quad\text{and}\quad
|B_{2r}| 
\le 
\bigg\{4^{\frac{1+\sigma_1}{\sigma_1}}C_4\bigg(\int_{\Omega'}\phi(x,|Du|)^{1+\sigma_1}\,dx+1\bigg)^{ \frac{2+\sigma_1}{\sigma_1}}\bigg\}^{-1} \le \frac{1}{4C_4}.
\end{equation}
Analogously as \eqref{eq:rrestriction1}, this and Lemma~\ref{lem:CZstart} imply that 
\[
\varrho_{L^\phi(B_{2r})}(|Du|) \le 1
\quad\text{and}\quad
\int_{B_{2r}} \phi(x,|Du_r|)^{1+\frac{\sigma_1}{2}} \, dx
\le \frac{1}{2}.
\]
Moreover, Lemma~\ref{lem:GreverseDu}(2) for $u_r$ and $\sigma_1\in(0,\sigma)$ imply that 
\[
\bigg(\fint_{B_{r}} \phi(x,Du_r)^{1+\frac{\sigma_1}{2}} \, dx\bigg)^{\frac{1}{1+\sigma_1/2}} \le c \tphi \bigg(\fint_{B_{2r}} |Du_r| \, dx +1\bigg)
\]
for some $c>0$ depending on $n$, $p$, $q$ and $L$. Therefore, $u_r$ satisfies the inequalities required in Lemma~\ref{lem:comparison1} with $\gamma=2+\tfrac{4}{\sigma_1}$. 
Let $\tu_r\in u_r+W^{1,\tphi}_0(B_{r})$ be the unique weak solution to
\eqref{eqv}
where $\tA$ and $\tphi$ are defined in Section~\ref{subsect:approximation}. 
Then Lemma~\ref{lem:comparison1} gives the following comparison estimates:

\begin{lemma}\label{lem:approximation} 
In the setting of Lemma~\ref{lem:GreverseDu},
\[
\fint_{B_{r}} \phi(x,|D\tu_r|) \,dx \le c \bigg(\fint_{B_{2r}} \phi(x,|Du|)\, dx +1\bigg),
\]
and, for any $\epsilon\in(0,1)$,
\[
\fint_{B_{r}} \phi(x,|Du -D\tu_r|)\, dx
 \le c( \epsilon + \Theta(r)^{\alpha_2}) \bigg(\fint_{B_{2r}} \phi(x,|Du|) \, dx + 1 \bigg) + \epsilon^{-\frac{p+1}{p}} \fint_{B_{2r}} \phi(x,|F|)\, dx
\]
for some $c>0$ and $\alpha_2\in(0,1)$ depending on $n,p,q$ and $L$. 
\end{lemma}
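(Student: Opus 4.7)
The plan is to establish the two inequalities by combining energy bounds on the approximating problem \eqref{eqv} with two complementary comparison arguments, one between $u$ and $u_r$ (which isolates the data $F$) and one between $u_r$ and $\tu_r$ (which isolates the oscillation quantity $\Theta$).

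For the first (energy) inequality, I would begin with Lemma~\ref{lem:GreverseDu}(1) applied to the pair $(u_r,\tu_r)$, obtaining $\int_{B_r}\tphi(|D\tu_r|) \le C_0 \int_{B_r}\tphi(|Du_r|)$, and majorize the right-hand side by $\fint\phi(x,|Du|) + 1$ via Proposition~\ref{prop:approxProp}(3) together with the energy bound of Lemma~\ref{lem:CZstart}. To upgrade the left-hand side from $\tphi$ to $\phi(x,\cdot)$, I would invoke Lemma~\ref{lem:GreverseDu}(4) for $(u_r,\tu_r)$ with $\tilde\sigma = \sigma_1/2$ and $M_0 \lesssim 1$ (available from \eqref{eq:rrestriction2}), combined with the elementary bound $a \le a^{1+\tilde\sigma}+1$ and the higher-integrability estimate for $u_r$ in Lemma~\ref{lem:CZstart}; the smallness $\int_{B_{2r}}\phi(x,|Du|)^{1+\sigma_1/2} \le 1/2$ imposed by \eqref{eq:rrestriction2} closes this step without loss.

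For the comparison estimate, I would split $|Du - D\tu_r| \le |Du - Du_r| + |Du_r - D\tu_r|$ and treat each piece separately. For $|Du_r - D\tu_r|$, Lemma~\ref{lem:comparison1} applies directly to the pair $(u_r,\tu_r)$ with $\gamma = 2 + 4/\sigma_1$, whose hypothesis $\varrho_{L^{\phi^{\gamma/(\gamma-2)}}(B_r)}(|Du_r|)\le 1$ and associated energy inequality are supplied by Lemma~\ref{lem:GreverseDu}(2) applied to $u_r$. The conclusion
\[
\fint_{B_r}\tphi(|Du_r - D\tu_r|)\,dx
\le
\tphi\Bigl(C_2\,\Theta(r)^{1/q}\bigl(\fint_{B_{2r}}|Du_r|\,dx + 1\bigr)\Bigr),
\]
combined with \inc{p} of $\tphi$ (valid since $\Theta(r)$ is small for small $r$), extracts a gain of $\Theta(r)^{p/q}$, after which the residual $\tphi(\fint|Du_r|+1)$ is converted to $\fint\phi(x,|Du|)+1$ exactly as in the energy step above. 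This produces the $\Theta(r)^{\alpha_2}$ contribution with $\alpha_2 = p/q$.

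For $|Du - Du_r|$, I would test the difference of \eqref{maineq1} and the equation for $u_r$ against $u - u_r \in W^{1,\phi}_0(B_{2r})$. The quasi-isotropic monotonicity of $A(x,\cdot)$ bounds the left-hand side from below by a $\phi'/|\xi|$-weighted quadratic that controls $\fint \phi(x,|Du - Du_r|)$, while the right-hand side $\int A(x,F)\cdot D(u-u_r)$ is absorbed via Young's inequality between $\phi(x,\cdot)$ and $\phi^*(x,\cdot)$ with a small parameter $\epsilon$; the $\phi^*$-term is converted to $\phi(x,|F|)$ using Proposition~\ref{prop0}(4) together with $|A(x,F)| \approx \phi'(x,|F|)$ from \eqref{phi-growth}. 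Calibrating the powers of $\epsilon$ via \inc{p}/\adec{q} of $\phi$ yields the $\epsilon^{-(p+1)/p}$ factor, and combining with the first piece through the \adec{q} property of $\phi$ completes the argument. The main technical difficulty is the bookkeeping when passing between $\tphi$ and $\phi(x,\cdot)$ and between integrability exponents $1$ and $1+\sigma_1/2$: without the uniform smallness built into \eqref{eq:rrestriction2}, the step $a \le a^{1+\tilde\sigma}+1$ followed by Hölder would not close linearly in $\fint\phi(x,|Du|)$, and the exponent $\alpha_2 = p/q$ is essentially forced by the interaction of the $\Theta^{1/q}$ from Lemma~\ref{lem:comparison1} with the \inc{p} behaviour of $\tphi$ at small arguments.
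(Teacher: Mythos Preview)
Your overall structure (split $|Du-D\tu_r|\le |Du-Du_r|+|Du_r-D\tu_r|$, handle the first piece by testing with $u-u_r$ and Young's inequality, handle the second via Lemma~\ref{lem:comparison1}) matches the paper. The treatment of $|Du-Du_r|$ is essentially correct and coincides with the paper's.

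The gap is in the $|Du_r-D\tu_r|$ step. Lemma~\ref{lem:comparison1} bounds $\fint_{B_r}\tphi(|Du_r-D\tu_r|)\,dx$, but the claim needs $\fint_{B_r}\phi(x,|Du_r-D\tu_r|)\,dx$. You cannot pass from $\tphi$ to $\phi(x,\cdot)$ here: Proposition~\ref{prop:approxProp}(3) only gives $\tphi(t)\le c(\phi(x,t)+1)$, i.e.\ the wrong direction, and for $t>t_0$ the function $\phi(x,t)$ can grow like $t^q$ while $\tphi(t)$ only grows like $t^p$. Your remark that ``the residual $\tphi(\fint|Du_r|+1)$ is converted to $\fint\phi(x,|Du|)+1$ exactly as in the energy step'' misses that the energy step uses Lemma~\ref{lem:GreverseDu}(4), which is a Calder\'on--Zygmund bound on $\phi(x,|D\tu_r|)$, not a pointwise comparison $\phi\lesssim\tphi$; it does not apply to the \emph{difference} $Du_r-D\tu_r$.

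The paper closes this gap by interpolation: with $\tau$ defined by $\frac{\tau}{1+\sigma_1/2}+q(1-\tau)=1$, H\"older's inequality on $\phi=\phi^{\tau}\phi^{1-\tau}$ gives
\[
\fint_{B_r}\phi(x,|Du_r-D\tu_r|)\,dx
\le \Bigl(\fint_{B_r}\phi^{1+\sigma_1/2}\Bigr)^{\frac{\tau}{1+\sigma_1/2}}
\Bigl(\fint_{B_r}\phi^{1/q}\Bigr)^{q(1-\tau)}.
\]
The first factor is controlled directly in $\phi(x,\cdot)$ by Lemma~\ref{lem:GreverseDu}(4) plus Lemma~\ref{lem:high}. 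For the second factor one replaces $\phi$ by $\phi^+_{B_r}$, applies Jensen to the concave function $(\phi^+_{B_r})^{1/q}$, and only then invokes Lemma~\ref{lem:comparison1} (in $L^1$) together with \aone{} to return from $\phi^+_{B_r}$ to $\phi^-_{B_r}$. This is where the $\Theta$-gain enters, at the price of the exponent $\alpha_2=(1-\tau)\frac pq$, strictly smaller than the $p/q$ you claim.
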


\begin{proof}
We first obtain a comparison estimate between $Du$ and $Du_r$. 
We take $u-u_r \in W_0^{1,\phi}(B_{2r})$ as a test function in the weak formulations of \eqref{maineq1} and \eqref{maineq} to discover that 
\[
\fint_{B_{2r}} (A(x, Du)-A(x,Du_r)) \cdot (Du-Du_r) \,dx = \fint_{B_{2r}} A(x, F) \cdot (Du-Du_r) \,dx.
\]
By \cite[Proposition~3.8(3)]{HasO22}, the monotonicity and coercivity properties of 
growth functions, the equation above, and Lemma~\ref{lem:CZstart},
we see that 
\begin{align*}
&\fint_{B_{2r}} \phi(x,|Du -Du_r|) \,dx \\
& \lesssim \epsilon \fint_{B_{2r}} \phi(x,|Du|) +\phi(x, |Du_r|) \,dx+ \epsilon^{-1}\fint_{B_{2r}} \frac{\phi'(x,|Du|+|Du_r|)}{|Du|+|
Du_r|} |Du -Du_r|^2 \,dx\\
& \lesssim \epsilon \fint_{B_{2r}} \phi(x,|Du|) \,dx+ \epsilon^{-1}\fint_{B_{2r}} \phi'(x,|F|) |Du -Du_r| \,dx.
\end{align*} 
Then, using Young's inequality we have 
\begin{equation}\label{eq:approximation_pf1}
\fint_{B_{2r}} \phi(x,|Du -Du_r|)\,dx 
\lesssim \epsilon \fint_{B_{2r}} \phi(x,|Du|)\,dx + \epsilon^{-\frac{p+1}{p}} \fint_{B_{2r}} \phi(x,|F|)dx.
\end{equation}

Let $\tau>0$ be the solution of $\frac{\tau}{1+\sigma_1/2}+q(1-\tau)=1$.
By H\"older inequality with exponents $\frac{1+\sigma_1/2}{\tau}$ and $\frac{1}{q(1-\tau)}$ 
applied to $ \phi=\phi^{\tau}\phi^{1-\tau}$, we derive that 
\[\begin{split}
&\fint_{B_{r}} \phi(x,|Du_r-D\tu_r|)\,dx \\
&\le \bigg( \fint_{B_{r}} \phi(x,|Du_r-D\tu_r|)^{1+\frac{\sigma_1}{2}}\,dx\bigg)^{\frac{\tau}{1+\sigma_1/2}} \bigg( \fint_{B_{r}} \phi(x,|Du_r-D\tu_r|)^{\frac{1}{q}}\,dx\bigg)^{q(1-\tau)}. 
\end{split}\]
By Lemma~\ref{lem:GreverseDu}(4) and Lemma~\ref{lem:high} for $u_r$ with $\sigma=\sigma_1$  
we have 
\[\begin{split}
\bigg(\fint_{B_{r}} \phi(x,|Du_r-D\tu_r|)^{1+\frac{\sigma_1}{2}}\,dx\bigg)^{\frac1{1+\sigma_1/2}} 
&\lesssim \bigg(\fint_{B_{r}} \phi(x,|Du_r|)^{1+\frac{\sigma_1}{2}}\,dx\bigg)^{\frac1{1+\sigma_1/2}} +1 \\
&\lesssim \fint_{B_{2r}} \phi(x,|Du_r|) \, dx +1 .
\end{split}\]
Moreover, from
by Jensen's inequality, Lemma~\ref{lem:comparison1} for  $u$ with $\gamma=2+\frac{4}{\sigma_1}$ 
and \aone{} with the fact that $\fint_{B_{2r}} |Du_r|\,dx \lesssim (\phi^-_{B_{2r}})^{-1}(|B_{2r}|^{-1})$, we obtain that 
\begin{align*} 
&\bigg(\fint_{B_{r}} \phi(x,|Du_r-D\tu_r|)^{\frac{1}{q}}\,dx\bigg)^{q}
\le \bigg(\fint_{B_{r}} \phi^+_{B_r}(|Du_r-D\tu_r|)^{\frac{1}{q}}\,dx\bigg)^{q} \\
&\qquad\qquad \lesssim \phi^+_{B_{r}} \bigg(\fint_{B_{r}} |Du_r-D\tu_r|\,dx\bigg)
 \lesssim \Theta(r)^{\frac{p}{q}} \phi^+_{B_{r}} \bigg(\fint_{B_{2r}} |Du_r| \,dx +1\bigg) \\
&\qquad\qquad\lesssim \Theta(r)^{\frac pq} \phi^-_{B_{2r}} \bigg( \fint_{B_{2r}} |Du_r| \,dx+1\bigg) \lesssim \Theta(r)^{\frac pq} \bigg( \fint_{B_{2r}} \phi(x,|Du_r|) \,dx +1\bigg) .
\end{align*}
Combining the previous three estimates yields
\[
\fint_{B_{r}} \phi(x,|Du_r-D\tu_r|)\,dx \lesssim \Theta(r)^{(1-\tau)\frac pq}\left(\fint_{B_{2r}} \phi(x, |Du_r|) \,dx +1\right).
\]
We obtain the desired estimates from this, \eqref{eq:approximation_pf1} and Lemma~\ref{lem:CZstart}.
\end{proof}

Now, we are ready to prove the Calder\'on--Zygmund estimate.

\begin{proof}[Proof of Theorem~\ref{thm:CZ}]
Since we have obtained the relevant comparison estimates, we can prove the theorem by using the well-known argument mentioned in Section~\ref{sect:intro}. Therefore, we will provide a sketch of the proof and refer to, for instance, our recent paper \cite{LOP24} for details. We first observe that by Proposition~\ref{prop:meanreverse},
\[
\lim_{r\to 0^+}\theta_\gamma(r)=0 \quad \text{for }\ \gamma = 2+\tfrac{4}{\sigma_1}.
\]

Let $\delta \in (0,1)$ be a small constant to be determined later, 
and $B_{2r} \subset \Omega'\Subset \Omega$, where $r>0$ is a small number
which satisfies \eqref{eq:rrestriction2} and $\Theta(r)^{\alpha_2}\le\epsilon:=\delta^{\frac{p}{2p+1}{p}}$ 
with notation from Lemma~\ref{lem:approximation}. Set
\begin{equation}\label{def_lambda0}
 \lambda_0 := \fint_{B_{2r}} \phi(x,|Du|)\,dx + \frac{1}{\delta} \fint_{B_{2r}} \phi(x,|F|)\,dx +1
\end{equation}
and for $\rho\in (0, 2r]$ and $\lambda>0$
\[
E(\lambda, \rho)=E(\lambda, B_\rho) : = \{ x \in B_\rho : \phi( x, |Du|) > \lambda\}.
\]

By the Vitali covering lemma, for a given 
$\lambda \ge \alpha \lambda_0,$ where $\alpha : =( \frac{20}{\tau_2 - \tau_1})^n,$ and $1\le \tau_1<\tau_2\le 2$, there exists a disjoint family 
of balls $\{ B_{\rho_i}(y^i)\}_{i=1}^{\infty}$ with $y^i \in E(\lambda, \tau_1 r)$ and $\rho_{i} \in (0, \frac{(\tau_2-\tau_1)r}{10})$ such that
$$E(\lambda, \tau_1 r) \subset \bigcup_{i=1}^{\infty} B_{5\rho_i}(y^i), $$
\[
\fint_{B_{\rho_i}(y^i)} \phi(x, |Du|) \,dx +
\frac{1}{\delta} \fint_{B_{\rho_i}(y^i)} \phi(x, |F|) \,dx = \lambda,
\]
and, for any $ \rho \in ( \rho_i, (\tau_2-\tau_1)r]$,
\[
\fint_{B_{\rho}(y^i)}\phi(x,|Du|) \,dx+ \frac{1}{\delta} \fint_{B_{\rho}(y^i)} \phi(x, |F|) \,dx <\lambda.
\]
Note that the previous inequality can be applied when $\rho=10\rho_i$ and 
that the penultimate display yields
\begin{equation}\label{covering_est}
\frac\lambda2 \left|B_{\rho_i}(y^i)\right|
\le
 \int_{B_{\rho_i}(y^i)\cap\left\{ \phi(x,|Du|) >\frac{\lambda}{4} \right\}}\phi(x,|Du|) \,dx+ \frac1{\delta} \int_{B_{\rho_i}(y^i)\cap\left\{\phi(x,|F|)>\frac{\delta\lambda}{4} \right\}} \phi(x,|F|) \,dx.
\end{equation}

We use a two-step approximation approach in order to be able to apply our earlier results. 
For each $i$, we consider the unique solution $h_r^i \in u+W^{1,\phi}_0(B_{10\rho_i}(y^i))$ to 
\eqref{maineq} in $B_{10\rho_i}(y^i)$
and the unique weak solution
$\tu_r^i \in h_r^i+ W_0^{1,\tphi}(B_{5\rho_i}(y^i))$ to \eqref{eqv} in $B_{5\rho_i}(y^i)$.
Then by Lemma~\ref{lem:approximation} and 
$\Theta(r)^{\alpha_2}\le\epsilon$, we have 
\begin{equation}\label{est_DuDviLambda}
\fint_{B_{5\rho_i}(y^i)} \phi(x, |Du-D\tu_r^i|)\,dx 
\lesssim \epsilon \lambda.
\end{equation}
%
%
Moreover, by Lemma~\ref{lem:GreverseDu}(3), Jensen's inequality, Lemma~\ref{lem:CZstart} and 
$\varrho_{L^\phi(B_{2r})}(|Du|)\le 1$
 we conclude that
\[\begin{split}
\phi^{-}_{B_{10\rho_i}(y^i)}\bigg(\fint_{B_{5\rho_i}(y^i)} |D\tu_r^i|\, dx \bigg)
&\lesssim \phi^{-}_{B_{10\rho_i}(y^i)}\bigg(\fint_{B_{10\rho_i}(y^i)} |Dh_r^i|\, dx+1\bigg) \\
&\lesssim \fint_{B_{10\rho_i}(y^i)}\phi(x, |Dh_r^i|)\, dx +1 
\lesssim |B_{10\rho_i}|^{-1}.
\end{split}\]
Hence by Lemma~\ref{lem:v_regularity}, \aone{} and Jensen's inequality, we have 
\begin{align*}
& \phi\big(y, \| D\tu_r^i\|_{L^{\infty}(B_{5\rho_i}(y^i))}\big) \le c \phi\bigg(y, \fint_{B_{10\rho_i}(y^i)} |D\tu_r^i| \,dx\bigg)\\
&\quad \le c \phi^-_{B_{5\rho_i}(y^i)}\bigg( \fint_{B_{10\rho_i}(y^i)} |D\tu_r^i| \,dx +1\bigg) \le c \fint_{B_{10\rho_i}(y^i)} \phi(x, |D\tu_r^i|) \,dx +1 \le c_0 \lambda
\end{align*}
for some constant $c_0 \ge 1.$ 
 
Set $K : = 2^{q} c_0$ 
and let $y \in B_{5\rho_i}(y^i)$ with $\phi(y, |Du|)> K\lambda$.
Then
\[
\phi(y, |D\tu_r^i|)\le c_0\lambda = 2^{-q} K \lambda < 2^{-q}\phi(y, |Du|)\le 
\phi(y, \tfrac12 |Du|). 
\]
Therefore $|D\tu_r^i|\le \frac12 |Du|$ so that $|Du|\le 2\,|Du-D\tu_r^i|$, 
%
which implies that 
\[
\phi(y, |Du|) \le 2^{q}\phi(y, |Du-D\tu_r^i|).
\]
By \eqref{covering_est} and \eqref{est_DuDviLambda}, we then derive that 
\[
\begin{split}
& \int_{B_{5\rho_i}(y^i) \cap E(K\lambda , \tau_1 r)} \phi(x, |Du|)\, dx\\
 &\le 2^{q} \int_{B_{5\rho_i}(y^i)} \phi(x, |Du-D\tu_r^i|) \,dx\le c \epsilon \lambda \left| B_{\rho_i}(y^i)\right|\\
 & \le c \epsilon \bigg(\int_{B_{\rho_i}(y^i)\cap\left\{ \phi(x,|Du|) >\frac{\lambda}{4} \right\}}\phi(x,|Du|) \,dx+ \frac1{\delta} \int_{B_{\rho_i}(y^i)\cap\left\{\phi(x,|F|)>\frac{\delta\lambda}{4} \right\}} \phi(x,|F|) \,dx\bigg).
 \end{split}
\]
Note that the balls $B_{\rho_i}(y^i)$ are mutually disjoint and
$$
E(K\lambda, \tau_1 r) \subset E(\lambda, \tau_1 r )\subset \bigcup_{i=1}^{\infty} B_{5\rho_i}(y^i) \subset B_{\tau_2 r}.
$$
Thus
\[
\begin{split}
 &\int_{ E(K\lambda, \tau_1 r)}\phi(x,|Du|)\, dx \le \sum_{i=1}^{\infty} \int_{B_{5\rho_i}(y^i) \cap E(K\lambda, \tau_1 r)} \phi(x,|Du|)\, dx \\
 & \qquad\le c \epsilon \bigg( \int_{B_{\tau_2r}\cap\left\{ \phi(x,|Du|)>\frac{\lambda}{4} \right\}} \phi(x,|Du|) \,dx + \frac1{\delta} \int_{B_{\tau_2r} \cap\left\{\phi(x,|F|)>\frac{\delta\lambda}{4} \right\}} \phi(x,|F|) \,dx\bigg)
 \end{split}
\]
for some constant $c>0.$

We next apply a truncation argument to complete the proof.
 For $k \geq \lambda,$ let us define
$\phi_k(x, |Du|) := \min\left\{\phi(x,|Du|), k \right\},$
and consider the super-level set of $\phi_k $ 
\[
E_k ( \lambda, \rho):= \left\{ y \in B_{\rho} : \phi_k(y, |Du|)> \lambda\right\}\ \ \text{for } \lambda,\rho>0. 
\]
Note that $E_k ( \lambda,\rho) = E ( \lambda, \rho)$ if $\lambda\le k$ and $E_k(\lambda,\rho)=\emptyset$ if $\lambda>k$.
Hence we can continue the earlier estimate for any $\lambda\ge \alpha \lambda_0$,
\[
\begin{split}
\int_{ E_k(K\lambda, \tau_1 r)}\phi(x,|Du|)\, dx 
 \le c \epsilon \bigg( \int_{E_k\left(\frac{\lambda}{4},\tau_2r\right)}\phi(x, |Du|)\,dx + \frac1{\delta} \int_{B_{\tau_2r} \cap\left\{\phi(x,|F|)>\frac{\delta\lambda}{4} \right\}} \phi(x,|F|) \,dx\bigg).
 \end{split}
\]
Then
we multiply both sides by $\lambda^{s-2}$, $s\in (1,\infty)$, and integrate with respect to $\lambda$ over $(\alpha\lambda_0, \infty)$ to discover that
\[
\begin{split}
 I_0 &:=\int_{\alpha\lambda_0}^\infty \lambda^{s-2}\int_{ E_k(K\lambda, \tau_1 r)} \phi(x,|Du|) \, dxd\lambda\\
 & \leq c \epsilon \bigg( \int_{\alpha\lambda_0}^\infty \lambda^{s-2} \int_{E_k\left(\frac{\lambda}{4},\tau_2r\right)} \phi(x,|Du|) \,dxd\lambda \\
 &\qquad+\int_{\alpha\lambda_0}^\infty \lambda^{s-2} \int_{B_{\tau_2r}\cap\left\{\frac{\phi(x,|F|)}{\delta}>\frac{\lambda}{4} \right\}} \frac{\phi(x,|F|)}{\delta} \,dxd\lambda\bigg)\\
&=: c\epsilon (I_1+I_2).
 \end{split}
\]
By Fubini's theorem, we then derive that
\[
\begin{split}
I_0 
&= \frac{1}{s-1}\, \int_{E_k(K\alpha\lambda_0, \tau_1 r)}
\phi(x,|Du|) \bigg[\Big(\frac{\phi_k(x,|Du|)}{K}\Big)^{s-1} - (\alpha\lambda_0)^{s-1} \bigg] \, dx,
 \end{split}
\]
\[
\begin{split}
I_1&= \int_{E_k\left(\frac{\alpha\lambda_0}{4},\tau_2r\right)}\phi(x,|Du|) \bigg(\int_{\alpha\lambda_0}^{ 4\phi_k(x,|Du|)} \lambda^{s-2} \,d\lambda\bigg) dx \\
&\leq \frac{4^{s-1}}{s-1} \int_{B_{r_2 r}} \phi(x,|Du|) \phi_k(x,|Du|)^{s-1} \, dx,
 \end{split}
\]
and, similarly,
\[
I_2\leq \frac{4^{s-1}}{s-1} \int_{B_{\tau_2 r}} \left[ \frac{\phi(x,|F|)}{\delta}\right]^s \, dx.
\]
With these expressions for $I_0$, $I_1$, $I_2$, we continue our earlier estimate as
\[
\begin{split}
&\int_{B_{\tau_1r}} \phi(x,|Du|) \phi_k(x,|Du|)^{s-1} \, dx \\
&\le \int_{E_k(K\alpha\lambda_0, \tau_1 r)} \phi(x,|Du|) \phi_k(x,|Du|)^{s-1} \, dx + (K\alpha\lambda_0)^{s-1} \int_{B_{\tau_1 r}} \phi(x,|Du|) \, dx\\
 & \leq c_1\epsilon \int_{B_{\tau_2r}} \phi(x,|Du|)\phi_k(x,|Du|)^{s-1} \, dx + c (\alpha\lambda_0)^{s-1}\int_{B_{2 r}} \phi(x,|Du|) \,dx
\\
&\qquad +c \epsilon\int_{B_{2 r}} \left[\frac{\phi(x,|F|)}{\delta} \right]^s \, dx
\end{split}
\]
for some $c_1=c_1(n,p,q,L,L_1)\ge 0$. At this stage, we fix $\epsilon\in(0,1)$ such that $c_1\epsilon\le \frac12$, which also determines $\delta$ and $r$. From the definition of $\alpha$ we finally have 
\[
\begin{split}
\int_{B_{\tau_1 r}}\phi(x,|Du|) \phi_k(x,|Du|)^{s-1} \, dx
& \le \frac12 \int_{B_{\tau_2 r}} \phi(x,|Du|)\phi_k(x, |Du|)^{s-1} \, dx\\
&  +\frac{c\lambda_0^{s-1}}{(\tau_2-\tau_1)^{n(s-1)}}\int_{B_{2r}}\phi(x,|Du|)\,dx+ c \int_{B_{2 r}} \phi(x, |F|)^s \, dx .
\end{split}
\]
Then a standard iteration argument for $1\le \tau_1 < \tau_2 \le 2$ yields 
\[
\int_{B_{r}}\phi(x,|Du|) \phi_k(x,|Du|)^{s-1} \, dx \leq c\lambda_0^{s-1}\int_{B_{2r}}\phi(x,|Du|)\,dx+ c \int_{B_{2 r}} \phi(x,|F|)^s \, dx.
\]
Therefore, from Lebesgue's monotone convergence theorem, H\"older's inequality and Young's inequality together with the definition of $\lambda_0$ in \eqref{def_lambda0}, we derive that
\[
\begin{split}
\fint_{B_{r}}\phi(x,|Du|)^s\,dx &= \lim_{k\to\infty} \fint_{B_{r}}\phi(x,|Du|) \phi_k(x,|Du|)^{s-1} \, dx\\
& \le c\lambda_0^{s-1}\fint_{B_{2r}}\phi(x,|Du|)\,dx+ c \fint_{B_{2 r}} \phi(x,|F|)^s \, dx\\
& \le c\bigg(\fint_{B_{2r}}\phi(x,|Du|)\,dx\bigg)^s+ c \fint_{B_{2 r}} \phi(x,|F|)^s \, dx + c ,
\end{split}
\]
which together with the growth condition of growth functions implies the desired estimates. 
\end{proof}


\section{Continuity of the derivative}
\label{sect:proofC1}

We next prove Theorem~\ref{thm:C1}. In this section we always suppose that $A:\Omega\times \R^n\to\R^n$ satisfies the quasi-isotropic $(p,q)$-growth condition and 
$A^{(-1)}$ satisfies the \DMA{\gamma} condition for some $\gamma>2$.
Then $A^{(-1)}$ also satisfies the \VMA{} condition and, by Proposition~\ref{prop:DMA}, the \SA{} condition, and hence the \aone{} condition, with relevant constant $L$ depending on $n$, $p$, $q$, $L$, $L_1$ and $\theta_\gamma$. 
By Theorem~\ref{thm:CZ} with $F\equiv 0$, $\phi(\cdot,|Du|)\in L^{s}_{\loc}(\Omega)$
for any $s>1$.  Moreover, for $\Omega'\Subset\Omega$ there exists $R_1>0$ depending on $n$, $p$, $q$, $L$, $\gamma$, $\theta_\gamma$, $Du$ and $\Omega'$ and   such that  for any $B_{2r}\subset\Omega'$ with $r\le R_1$, 
the condition \eqref{eq:rrestriction1} holds,
\[
\varrho_{L^{\phi^{\gamma/(\gamma-2)}}(B_{2r})}(|Du|)=\int_{B_{2r}} \phi(x,|Du|)^{\frac{\gamma}{
\gamma-2}}\, dx \le 1,
\]
and, by Theorem~\ref{thm:CZ} with $F\equiv0$, $s=\frac{\gamma}{\gamma-2}$ and scaled balls,
\[
\bigg(\fint_{B_r} \phi(x,|Du|)^{\frac{\gamma}{\gamma-2}}\, dx\bigg)^{\frac{\gamma-2}{\gamma}} \le c \bigg(\fint_{B_{3r/2}} \phi(x,|Du|)\, dx+1\bigg) \le C^*_\gamma\, \tphi \bigg(\fint_{B_{2r}} |Du|\, dx+1\bigg),
\]
where the last inequality follows from Lemma~\ref{lem:GreverseDu}(2), with scaled balls, and the constant $C^*_\gamma>0$ depends on $n$, $p$, $q$, $L$ and $\gamma$.
Therefore, we obtain the necessary inequalities 
in the assumptions to apply Lemmas~\ref{lem:comparison1} and \ref{lem:comparison2}.

Let $u\in W^{1,\phi}_{\loc}(\Omega)$ be a weak solution to \eqref{maineq}. Fix 
$B_{2r}(x_0)\subset\Omega'$ with $r\le R_1$. 
For $\delta\in(0, \frac14)$ to be determined later and $k\in \N\cup\{0\}$, set
\[
\Theta_k :=\Theta(\delta^kr),
\quad
 B_k := B_{\delta^k r}(x_0), 
\quad\text{and}\quad
E_k := \fint_{B_k} |Du-(Du)_{B_k}|\,dx,
\]
where $\Theta$ is defined in Lemma~\ref{lem:comparison}. Suppose that 
\[
\delta\le \delta_1,
\]
where $\delta_1$ is given in Lemma~\ref{lem:theta-sum}. Then this lemma implies that 
\[
\sum_{k=0}^\infty \Theta_k = 
\sum_{k=0}^\infty \Theta(\delta^kr) 
\le 
\frac{c}{\delta^{(4n+1)/\gamma}\log\delta^{-1}}\int_0^r\Theta(\rho)\, \frac{d\rho}{\rho} 
< \infty.
\]
For each $k\in \N$, we construct a function $\tphi$ in the ball $B_k$ as in Section~\ref{sect:comparison} and 
let $\tu_k\in u+W^{1,\tphi}_0(B_k)$ be the weak solution to \eqref{eqv} in $B_k$. 
We will use the named contants
$C_0$, $C_1$, $C_2$ and $C_3$ from Lemmas~\ref{lem:GreverseDu}, \ref{lem:v_regularity},  
\ref{lem:comparison1} and \ref{lem:comparison2}. 
The following results hold assuming only the inequalities from these four results. 

Let us first record an estimate for $E_{k+1}$ with $k\ge j$. We integrate the inequality 
\[
|Du-(Du)_{B_{k+1}}| \le 
|Du-D\tu_j| + |D\tu_j-(D\tu_j)_{B_{k+1}}| + |(D\tu_j)_{B_{k+1}}-(Du)_{B_{k+1}}| 
\]
over $B_{k+1}$, then use Lemma~\ref{lem:v_regularity} with 
$(B_\varrho, B_\rho)\leftarrow(B_{k+1}, B_{k})$ and choose $\delta$ small so that 
$C_1 \delta^{\alpha} \le \frac1{32}$, to conclude that 
\begin{equation}\label{eq:Ek}
\begin{split}
E_{k+1} 
&\le 
\fint_{B_{k+1}} |D\tu_j-(D\tu_j)_{B_{k+1}}| \,dx + 2\fint_{B_{k+1}} |Du-D\tu_j| \,dx\\
&\le
C_1 \delta^{\alpha}\fint_{B_{k}} |D\tu_j-(D\tu_j)_{B_{k}}| \,dx
+ 2\delta^{-n}\fint_{B_{k}} |Du-D\tu_j| \,dx \\
&\le
2C_1 \delta^{\alpha}\fint_{B_{k}} |D\tu_j-(Du)_{B_{k}}| \,dx
+ 2\delta^{-n}\fint_{B_{k}} |Du-D\tu_j| \,dx \\
&\le 
\tfrac1{16} E_k
+ 4\delta^{-n} \fint_{B_{k}} |Du-D\tu_j| \,dx.
\end{split}
\end{equation}

\begin{lemma}\label{lem:E-iteration}
In the setting fixed in the beginning of the section, let $\delta\in(0,\delta_1]$, $\epsilon \in (0,\frac14)$ and $k\in\N$ satisfy 
\[
\max\{C_2 \delta^{-n}\Theta_k^{1/q}, 32C_0C_1 \delta^{\alpha} \} \le \epsilon,
\]
where $\alpha\in(0,1)$ is from Lemma~\ref{lem:v_regularity}.
If
\[
\fint_{2B_{k}}|Du|\,dx \le \lambda 
\quad\text{and}\quad
\fint_{2B_{k+1}}|Du|\,dx \ge \epsilon\lambda, 
\]
for some $\lambda \ge 1$, then 
\[
E_{k+2}
\le 
\tfrac1{16} E_{k+1} + 
4^q C_3 \epsilon^{-(q-1)} \delta^{-2n} \Theta_k\lambda.
\]
\end{lemma}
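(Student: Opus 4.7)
The plan is to use the decay formula \eqref{eq:Ek} one step further, with $k+1$ in place of $k$ and comparison function $\tu_k$ (i.e.\ $j=k$), so that
\[
E_{k+2}\le \tfrac1{16}E_{k+1}+4\delta^{-n}\fint_{B_{k+1}}|Du-D\tu_k|\,dx.
\]
The whole work then goes into bounding the comparison integral. Since the desired right-hand side carries only a single factor of $\Theta_k$ (not $\Theta_k^{1/q}$), the rough estimate Lemma~\ref{lem:comparison1} is too weak: one must apply the sharper Lemma~\ref{lem:comparison2} on the pair $(B_k,B_{k+1})$. For that I need a \textbf{pointwise} lower bound of the form $\inf_{B_{k+1}}|D\tu_k|\ge\lambda/\Lambda$, and this is the main obstacle.

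I plan to produce this lower bound in two steps. First transfer the mean lower bound on $|Du|$ to $D\tu_k$: applying Lemma~\ref{lem:comparison1} in the ball $B_k$ together with $\fint_{2B_k}|Du|\,dx\le \lambda$ (and $\lambda\ge 1$) gives
\[
\fint_{B_k}|Du-D\tu_k|\,dx\le 2C_2\Theta_k^{1/q}\lambda,
\]
and passing to the subball via $|B_k|/|2B_{k+1}|=(2\delta)^{-n}$ together with the hypothesis $C_2\delta^{-n}\Theta_k^{1/q}\le\epsilon$ and $n\ge 2$ yields
\[
\fint_{2B_{k+1}}|Du-D\tu_k|\,dx\le 2^{1-n}\epsilon\lambda\le \tfrac{\epsilon\lambda}2.
\]
Combined with $\fint_{2B_{k+1}}|Du|\,dx\ge\epsilon\lambda$ this gives $\fint_{2B_{k+1}}|D\tu_k|\,dx\ge\epsilon\lambda/2$.

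Second, I upgrade this $L^1$-bound to a pointwise bound using the oscillation estimate in Lemma~\ref{lem:v_regularity} with $(\nu,\rho)=(2\delta^{k+1}r,\delta^kr)$ and the $L^1$-control from Lemma~\ref{lem:GreverseDu}(3), $\fint_{B_k}|D\tu_k|\,dx\le 2C_0\lambda$:
\[
\osc_{2B_{k+1}}D\tu_k\le 2C_1(2\delta)^\alpha\fint_{B_k}|D\tu_k|\,dx\le 8C_0C_1\delta^\alpha\lambda\le \tfrac{\epsilon\lambda}4,
\]
where the last step uses $32C_0C_1\delta^\alpha\le\epsilon$. Picking a point $z_0\in 2B_{k+1}$ with $|D\tu_k(z_0)|\ge\epsilon\lambda/2$ (which exists by the mean bound) and using the oscillation estimate on $2B_{k+1}\supset B_{k+1}$ yields $\inf_{B_{k+1}}|D\tu_k|\ge\epsilon\lambda/4$.

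Finally, with $\Lambda:=4/\epsilon$, Lemma~\ref{lem:comparison2} applied to the pair $(B_k,B_{k+1})$ (so $r/\rho=1/\delta$, $\Theta(r)=\Theta_k$) gives
\[
\fint_{B_{k+1}}|Du-D\tu_k|\,dx\le C_3\bigl(\tfrac4\epsilon\bigr)^{q-1}\delta^{-n}\Theta_k\lambda=4^{q-1}C_3\epsilon^{-(q-1)}\delta^{-n}\Theta_k\lambda.
\]
Plugging this into the initial display produces $E_{k+2}\le \tfrac1{16}E_{k+1}+4^qC_3\epsilon^{-(q-1)}\delta^{-2n}\Theta_k\lambda$, as claimed. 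The delicate balance is exactly step two above: the hypotheses on $\delta,\epsilon,\Theta_k$ are tuned so that the comparison error on $2B_{k+1}$ absorbs half of $\epsilon\lambda$ and the oscillation absorbs a further quarter, leaving a definite positive pointwise bound of size $\epsilon\lambda/4$ that makes Lemma~\ref{lem:comparison2} applicable with the stated $\Lambda$.
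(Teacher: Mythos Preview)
Your proof is correct and follows essentially the same route as the paper: transfer the mean lower bound on $|Du|$ to $|D\tu_k|$ via Lemma~\ref{lem:comparison1}, upgrade to a pointwise lower bound $\inf_{B_{k+1}}|D\tu_k|\ge \epsilon\lambda/4$ using the oscillation estimate from Lemma~\ref{lem:v_regularity} together with Lemma~\ref{lem:GreverseDu}(3), then apply Lemma~\ref{lem:comparison2} with $\Lambda=4/\epsilon$ and insert the result into \eqref{eq:Ek} with $(k,j)\leftarrow(k+1,k)$. The bookkeeping of constants (in particular the use of $n\ge 2$ to get $2^{1-n}\le\tfrac12$ and of $\alpha<1$ to get $2^{2+\alpha}\le 8$) matches the paper's computation.
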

\begin{proof}
We use the second inequality in the assumption and Lemma~\ref{lem:comparison1} 
in $B_{k}$ to conclude that
\[
\epsilon \lambda 
\le 
\fint_{2B_{k+1}} |Du-D\tu_k|\,dx + \fint_{2B_{k+1}} |D\tu_k|\,dx 
\le 
2^{1-n}C_2 \delta^{-n}\Theta_k^{1/q} \lambda + \fint_{2B_{k+1}} |D\tu_k|\,dx.
\]
By the assumption $C_2 \delta^{-n}\Theta_k^{1/q}\le \epsilon$, 
the first term on the right is at most 
$\frac{\epsilon \lambda}{2}$ and can be absorbed into the left-hand side.
By Lemma~\ref{lem:v_regularity} with $(B_\rho, B_r)\leftarrow(2B_{k+1}, B_k)$, we have
\[
\inf_{2B_{k+1}} |D\tu_k| 
\ge 
\fint_{2B_{k+1}} |D\tu_k|\,dx - \osc_{2B_{k+1}}|D\tu_k| 
\ge 
\frac{\epsilon \lambda}{2} - 2^{1+\alpha}C_1 \delta^{\alpha } \fint_{B_k} |D\tu_k|\,dx 
\ge 
\frac{\epsilon \lambda}{4},
\]
where in the last step we used Lemma~\ref{lem:GreverseDu}(3) with $\tu=\tu_k$ and the assumption $32C_0C_1 \delta^{\alpha} \le \epsilon$. 
Therefore, applying Lemma~\ref{lem:comparison2} with 
$(\tu,\widetilde C, B_r, B_\rho)\leftarrow(\tu_k,\frac{4}{\epsilon}, B_k, B_{k+1})$, we estimate
\[
\fint_{B_{k+1}} |Du-D\tu_k|\, dx 
\le 
C_3 (\tfrac4\epsilon)^{q-1} \delta^{-n}\Theta_k \lambda.
\]
Note that $C_1 \delta^{\alpha} \le \frac1{32}$ follows from our assumption so we can use 
\eqref{eq:Ek} with $(k, j)\leftarrow(k+1, k)$ and 
the previous estimate
to conclude that 
\[\begin{split}
E_{k+2} 
&\le 
\tfrac1{16}E_{k+1}
+ 4\delta^{-n} \fint_{B_{k+1}} |Du-D\tu_k| \,dx 
\le 
\tfrac1{16} E_{k+1} + 
4^q C_3 \epsilon^{-(q-1)} \delta^{-2n} \Theta_k\lambda. \qedhere 
\end{split}\]
\end{proof}

%


\begin{lemma}\label{lem:induction}
In the setting fixed in the section, let $\delta\in (0,\delta_1]$, $\epsilon_0:=2^{-n-2}$, $\epsilon\in(0,\epsilon_0]$ and $j\in\N$ satisfy the conditions
\[
\max\{64 C_2 \delta^{-2n}\sup_{k\ge j}\Theta_k^{1/q}, 32C_0C_1 \delta^{\alpha}\} \le \epsilon_0\epsilon
\quad\text{and}\quad
\sum_{k=j}^{\infty}\Theta_{k}\le \frac{\delta^{3n}\epsilon_0\epsilon^{q}}{8 \cdot 4^q C_3}\,.
\]
For $m\ge j+1$, we assume that 
\begin{equation}\label{eq:inductionAss1} 
\frac1{\delta^n\epsilon} E_{j} + 
\fint_{2B_{j}}|Du|\,dx \le 2\epsilon_0\lambda
\end{equation}
and 
\begin{equation}\label{eq:inductionAss2}
\fint_{2B_k}|Du|\,dx \ge \epsilon\lambda \ \ \text{for every }\ k\in \{j+1,\ldots, m-1\} \ \ \text{when }\ m\ge j+2,
\end{equation}
for some $\lambda \ge 1$. Then 
\[
E_m\le \frac{\delta^n\epsilon_0\epsilon}{2}\lambda,\quad
\fint_{B_{m+1}}|Du - (Du)_{B_{j}}|\,dx \le 3\epsilon_0\epsilon \lambda 
\quad\text{and}\quad 
\fint_{2B_m}|Du|\,dx \le \lambda.
\]
\end{lemma}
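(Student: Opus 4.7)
The plan is to prove the three conclusions simultaneously by strong induction on $m\ge j+1$.

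For the base case $m=j+1$, the identity \eqref{eq:Ek} with $k=j$ gives $E_{j+1}\le\tfrac{1}{16}E_j+4\delta^{-n}\fint_{B_j}|Du-D\tu_j|\,dx$, and Lemma~\ref{lem:comparison1} bounds the comparison term by $C_2\Theta_j^{1/q}(\fint_{2B_j}|Du|\,dx+1)$. The hypothesis \eqref{eq:inductionAss1} supplies $E_j\le 2\delta^n\epsilon_0\epsilon\lambda$ and $\fint_{2B_j}|Du|\,dx\le 2\epsilon_0\lambda$, while the first smallness condition $64C_2\delta^{-2n}\Theta_j^{1/q}\le\epsilon_0\epsilon$ controls the $\Theta_j^{1/q}$-factor, together producing $E_{j+1}\le\tfrac{1}{2}\delta^n\epsilon_0\epsilon\lambda$. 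The remaining base-case conclusions then follow from the elementary estimates $\fint_{B_{j+2}}|Du-(Du)_{B_j}|\,dx\le \delta^{-n}(E_j+E_{j+1})$ and $\fint_{2B_{j+1}}|Du|\,dx\le (2\delta)^{-n}E_j+|(Du)_{B_j}|$, combined with $|(Du)_{B_j}|\le 2^n\fint_{2B_j}|Du|\,dx\le \lambda/2$ (forced by $\epsilon_0=2^{-n-2}$).

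For the inductive step $m\ge j+2$, I assume all three conclusions at each level in $\{j+1,\ldots,m-1\}$. Conclusion (a) is obtained by applying Lemma~\ref{lem:E-iteration} at each $k\in\{j,\ldots,m-2\}$: the required upper bound $\fint_{2B_k}|Du|\,dx\le\lambda$ is supplied by conclusion (c) at step $k$ (or by \eqref{eq:inductionAss1} when $k=j$), and the lower bound $\fint_{2B_{k+1}}|Du|\,dx\ge\epsilon\lambda$ comes from \eqref{eq:inductionAss2}. Iterating the resulting recursions $E_{k+2}\le\tfrac{1}{16}E_{k+1}+4^qC_3\epsilon^{-(q-1)}\delta^{-2n}\Theta_k\lambda$ down to the base case yields
\[
E_m\le \frac{E_{j+1}}{16^{m-j-1}}+4^qC_3\epsilon^{-(q-1)}\delta^{-2n}\lambda\sum_{k=j}^{m-2}\frac{\Theta_k}{16^{m-2-k}}.
\]
Since each weight $16^{-(m-2-k)}\le 1$, the tail hypothesis $\sum_{k\ge j}\Theta_k\le\frac{\delta^{3n}\epsilon_0\epsilon^q}{8\cdot 4^qC_3}$ forces the $\Theta$-sum to contribute at most $\tfrac{1}{4}\delta^n\epsilon_0\epsilon\lambda$, which, combined with the base-case bound on $E_{j+1}$, produces $E_m\le\tfrac{1}{2}\delta^n\epsilon_0\epsilon\lambda$. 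Conclusions (b) and (c) then follow from the telescoping estimates
\[
\fint_{B_{m+1}}|Du-(Du)_{B_j}|\,dx\le \delta^{-n}\sum_{k=j}^{m}E_k,\qquad \fint_{2B_m}|Du|\,dx\le (2\delta)^{-n}E_{m-1}+|(Du)_{B_j}|+\delta^{-n}\sum_{k=j}^{m-2}E_k,
\]
where the $E_k$-sums are controlled by applying the same iterated decomposition at each intermediate level; the summability of $\Theta_k$ and the base-case bound together make $\delta^{-n}\sum_k E_k$ comparable to $\epsilon_0\epsilon\lambda$, and $|(Du)_{B_j}|\le\lambda/2$ closes the argument.

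The main technical difficulty is the circular dependence built into the inductive step: applying Lemma~\ref{lem:E-iteration} at $k=m-2$ requires the upper bound on $\fint_{2B_{m-2}}|Du|\,dx$, which is itself part of conclusion (c) at an earlier level. The simultaneous strong induction on (a)--(c) resolves this, but forces tight coupling between $\delta$, $\epsilon$, and the tail $\sum\Theta_k$; the three smallness hypotheses in the statement are calibrated exactly to close the argument with the explicit constants $\tfrac{1}{2}$ and $3$.
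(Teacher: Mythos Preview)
Your proposal is correct and follows essentially the same approach as the paper: induction on $m$, with the base case handled via \eqref{eq:Ek} and Lemma~\ref{lem:comparison1}, and the inductive step driven by repeated application of Lemma~\ref{lem:E-iteration}. The only organizational difference is that the paper bounds $\sum_{k=j+1}^m E_k$ in one stroke via the telescoping identity $\sum_{k=j}^{m-2}(2E_{k+2}-E_{k+1})=\sum_{k=j+1}^m E_k-2E_{j+1}+E_m$, which simultaneously delivers the bound on $E_m$ and the sum needed for the second and third conclusions, whereas you first iterate the recursion to bound $E_m$ and then argue separately for $\sum_k E_k$; both routes are equivalent.
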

\begin{proof} 
We first note that \eqref{eq:Ek} with $k=j$, \eqref{eq:inductionAss1} and Lemma~\ref{lem:comparison1} in $B_j$ imply that
\[
E_{j+1} 
\le 
\tfrac1{16} E_j
+ 4\delta^{-n} \fint_{B_{j}} |Du-D\tu_j| \,dx
\le 
\frac{\delta^n \epsilon_0\epsilon}{8} \lambda + 
4C_2 \delta^{-n} \Theta_j^{1/q} \bigg(\fint_{2B_j}|Du|\, dx+1\bigg).
\]
By assumption $4C_2 \delta^{-n} \Theta_j^{1/q} \le 
\frac{1}{16}\delta^n\epsilon_0\epsilon$ and $\fint_{2B_j}|Du|\, dx+1\le \frac32 \lambda$ 
so that $E_{j+1}\le \frac14 \delta^n \epsilon_0\epsilon \lambda$. 
Moreover, by the assumption \eqref{eq:inductionAss1}, 
\begin{align*}
\fint_{2B_{j+1}}|Du|\,dx 
& \le \fint_{2B_{j+1}}|Du-(Du)_{B_{j}}|\,dx + |(Du)_{B_{j}}| 
\le (2\delta)^{-n}E_j + 2^{n}\fint_{2B_{j}}|Du|\,dx \\
&\le 2^n \cdot 2\epsilon_0\lambda \le \lambda. 
\end{align*}
Using also the previous estimate for $E_{j+1}$, we further obtain that 
\begin{align*}
\fint_{B_{j+2}}|Du - (Du)_{B_{j}}|\,dx 
&\le
\fint_{B_{j+2}}|Du - (Du)_{B_{j+1}}|\,dx + |(Du)_{B_{j+1}} - (Du)_{B_{j}}| \\
&\le 
\delta^{-n} (E_{j+1}+E_j) 
\le 
(\tfrac14 + 2) \epsilon_0 \lambda 
\le
3\epsilon_0 \lambda.
\end{align*}
Thus we have proved all the claims in the case $m=j+1$.

We prove the claim by induction when $m\ge j+2$; specifically, we assume that 
the claim holds up to $m-1$ and show it for $m$.
By the induction assumption $\fint_{2B_k}|Du|\, dx \le \lambda$ when $k\le m-1$. 
The assumption \eqref{eq:inductionAss2} gives $\fint_{2B_{k+1}}|Du|\, dx \ge \epsilon\lambda$ for 
$k\in \{j,\ldots, m-2\}$. Thus we can use Lemma~\ref{lem:E-iteration} to conclude that 
\[
2E_{k+2} - E_{k+1} 
\le 
c' \Theta_{k}\lambda
\]
for $k\in \{j, \ldots, m-2\}$ and $c':=2\cdot 4^q C_3 \epsilon^{-(q-1)} \delta^{-2n}$. 
Therefore
\begin{equation}\label{lem:induction_pf1}
\sum_{k=j+1}^{m} E_{k}
=
\sum_{k=j}^{m-2} (2E_{k+2} - E_{k+1}) + 2E_{j+1} - E_{m}
\le 
c' \lambda \sum_{k=j}^{m-2}\Theta_{k} + 2E_{j+1} \le \frac12 \delta^n \epsilon_0\epsilon \lambda
\end{equation}
since $\sum_{k=j}^{\infty}\Theta_{k}\le \frac{\delta^n\epsilon_0\epsilon}{4c'}$ by assumption
and $E_{j+1} \le \frac{\delta^n\epsilon_0\epsilon}4 \lambda$ as shown above. 
In particular, the first claim $E_m\le \frac12\delta^n\epsilon_0\epsilon\lambda$ is proved. 
We write
\[
Du-(Du)_{B_{j}} = 
[Du - (Du)_{B_m}] + [(Du)_{B_m} - (Du)_{B_{m-1}}] + \cdots 
+ [(Du)_{B_{j+1}} - (Du)_{B_{j}}].
\]
Integrating this over $B_{m+1}$ and using \eqref{eq:inductionAss1} and \eqref{lem:induction_pf1} to estimate $E_k$, we observe that
\begin{align*}
\fint_{B_{m+1}}|Du - (Du)_{B_{j}}|\,dx 
&\le
\sum_{k=j}^{m} \fint_{B_{k+1}} |Du-(Du)_{B_{k}}|\, dx 
 \le
\delta^{-n}\sum_{k=j}^{m} E_k 
\le 
3\epsilon_0\epsilon \lambda.
\end{align*}
This proves the second claim.
Similarly, we prove the inequality $\fint_{2B_m}|Du - (Du)_{B_{j}}|\,dx\le (2\delta)^{-n} \sum_{k=j}^{m-1}E_k \le \frac{3\epsilon_0\epsilon}{2^{n}} \lambda$.
Using this and the estimate $\fint_{2B_j}|Du|\,dx\le \frac32\epsilon_0 \lambda = 3\cdot 2^{-n-3}\lambda$, we obtain
\[
\fint_{2B_m}|Du|\,dx 
\le
\fint_{2B_m}|Du - (Du)_{B_{j}}|\,dx + 2^n\fint_{2B_j}|Du|\,dx
\le
\frac{3\epsilon_0\epsilon}{2^n} \lambda + \frac18\lambda 
\le \lambda.
\]
This concludes the proof of the third claim in the induction step, 
so the claim holds for any number of steps. 
\end{proof}

\begin{proposition}\label{prop:bounded}
In the setting fixed in the beginning of the section, 
the gradient $Du$ is locally essentially bounded. 
Moreover, 
\[
\|Du\|_{L^\infty(B_{r})} \le c \left(\fint_{B_{2r}}|Du|\,dx +1\right)
\]
for some $c=c(n,p,q,\gamma, L,L_1,L_{\gamma},\theta_\gamma)\ge 1$ and any 
$B_{2r}\subset \Omega'$ with $r\in(0,R_1]$. 
\end{proposition}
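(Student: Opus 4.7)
The plan is to apply the iteration Lemma~\ref{lem:induction} at almost every $x_0 \in B_r$ to the geometric sequence of balls $B_k := B_{\delta^k \rho}(x_0)$ with $\rho := r/2$, so that $2B_k \subset B_r(x_0) \subset B_{2r}$ for every $k \ge 0$, and to conclude via Lebesgue's differentiation theorem once we have shown that $\fint_{2B_k}|Du|\,dx$ is uniformly bounded by a suitable $\lambda$.

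First I fix the parameters. Take $\epsilon := \epsilon_0 = 2^{-n-2}$, choose $\delta \in (0, \delta_1]$ small enough that $32 C_0 C_1 \delta^\alpha \le \epsilon_0 \epsilon$, and then shrink $R_1$ so that, for every $r \in (0, R_1]$, the two smallness conditions
\[
64 C_2 \delta^{-2n} \sup_{k \ge 0} \Theta_k^{1/q} \le \epsilon_0 \epsilon
\qquad\text{and}\qquad
\sum_{k=0}^{\infty} \Theta_k \le \frac{\delta^{3n} \epsilon_0 \epsilon^q}{8 \cdot 4^q C_3}
\]
hold; this is possible because $\sup_k \Theta_k \to 0$ as $r \to 0^+$ by the vanishing property obtained through Proposition~\ref{prop:meanreverse}, and because Lemma~\ref{lem:theta-sum} controls the sum by a quantity vanishing with $r$. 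Finally I choose $C' \ge 1$ depending only on $n,\delta,\epsilon,\epsilon_0$ (hence on $n,p,q,L,\gamma,\theta_\gamma$) large enough that
\[
\lambda := C'\bigg( \fint_{2B_0} |Du|\,dx + 1 \bigg) \ge 1
\]
satisfies $E_0/(\delta^n \epsilon) + \fint_{2B_0}|Du|\,dx \le 2\epsilon_0 \lambda$, using the trivial bound $E_0 \le 2^{n+1}\fint_{2B_0}|Du|\,dx$. This makes \eqref{eq:inductionAss1} hold at $j = 0$.

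The core claim is that $\fint_{2B_k}|Du|\,dx \le \lambda$ for every $k \ge 0$. Define recursively $k_0 := 0$ and
\[
k_{i+1} := \min\Big\{k > k_i \;:\; \fint_{2B_k}|Du|\,dx < \epsilon \lambda\Big\},
\]
set to $+\infty$ when the set is empty. Proceeding by induction on $i$, I verify that \eqref{eq:inductionAss1} holds at $j = k_i$. For every $m \in \{k_i + 1, \ldots, k_{i+1}\}$ (or any $m > k_i$ when $k_{i+1} = \infty$), the minimality of $k_{i+1}$ ensures \eqref{eq:inductionAss2} for $k \in \{k_i + 1, \ldots, m - 1\}$, so Lemma~\ref{lem:induction} applies and yields $\fint_{2B_m}|Du|\,dx \le \lambda$. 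Moreover, at $m = k_{i+1}$, when finite, the same lemma also yields $E_{k_{i+1}} \le \tfrac12 \delta^n \epsilon_0 \epsilon \lambda$; combined with $\fint_{2B_{k_{i+1}}}|Du|\,dx < \epsilon \lambda \le \epsilon_0 \lambda$ this gives
\[
\frac{E_{k_{i+1}}}{\delta^n \epsilon} + \fint_{2B_{k_{i+1}}}|Du|\,dx \le \tfrac{3}{2}\epsilon_0 \lambda \le 2 \epsilon_0 \lambda,
\]
so \eqref{eq:inductionAss1} is reestablished at $j = k_{i+1}$ with the same $\lambda$, closing the induction.

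Hence $\fint_{2B_k}|Du|\,dx \le \lambda$ for every $k \ge 0$. Lebesgue's differentiation theorem applied at each Lebesgue point $x_0 \in B_r$ then yields $|Du(x_0)| \le \lambda$, and since $\fint_{2B_0}|Du|\,dx \le 2^n \fint_{B_{2r}}|Du|\,dx$ one obtains the desired estimate. The delicate point of the argument is the self-consistency of the restart mechanism: the key observation is that no separate estimate on $E_{k_{i+1}}$ is required, since the decay of the excess is already built into the conclusion of Lemma~\ref{lem:induction} and automatically restores the initial hypothesis at each restart index with the \emph{same} $\lambda$.
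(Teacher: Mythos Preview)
Your proof is correct and follows essentially the same strategy as the paper: iterate Lemma~\ref{lem:induction} at each Lebesgue point of $Du$ and conclude via Lebesgue differentiation. The only differences are organizational---you shrink $R_1$ where the paper instead advances to an index $j_0$, and you handle the possibility that $\fint_{2B_k}|Du|\,dx$ drops below $\epsilon\lambda$ by an explicit restart mechanism at those indices, whereas the paper argues via a dichotomy (either $F_k\le 2\epsilon_0\lambda$ infinitely often, giving the bound directly from the $\liminf$, or there is a last such index, after which one shows inductively that \eqref{eq:inductionAss2} holds for all larger $k$).
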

\begin{proof}
Let $\epsilon:=\epsilon_0=2^{-n-2}$ and choose $\delta:=\min\{\delta_1,(\frac{\epsilon_0^2}{32C_0C_1})^{1/\alpha}\}$. 
Let $j_0$ satisfy the conditions of Lemma~\ref{lem:induction} with $j=j_0$; 
such $j_0$ exists since the series $\sum \Theta_k$ is convergent.

Now we fix a Lebesgue point $x_0\in B_r$ of $Du$ and consider balls $B_k:=B_{\delta^kr}(x_0)$, $k\in\mathbb N$. For $k\ge j_0$,
we define
\[
F_k := \frac1{\delta^n \epsilon_0}E_{k} + \fint_{2B_{k}} |Du|\, dx
\quad\text{and}\quad
\lambda:= \tfrac1{2\epsilon_0} F_{j_0} +1.
\]
Since $\lambda \le F_{j_0}\lesssim \delta^{-j_0} \fint_{B_{2r}}|Du|\, dx$ and 
both $\delta$ and $j_0$ depend only on the parameters and can be included in the 
constant $c$, we complete the proof by showing that $|Du(x_0)|\le \lambda$.

Since $x_0$ is a Lebesgue point, 
\[
|Du(x_0)| \le \liminf_{k\to \infty} \fint_{2B_k} |Du|\, dx.
\]
The claim follows if $F_k \le 2\epsilon_0\lambda$ 
for infinitely many $k$. If this is not the case we let $j\ge j_0$ be the largest 
index with $F_j \le 2\epsilon_0\lambda$; note that such index exists as $F_{j_0} \le 2\epsilon_0\lambda$ 
by the choice of $\lambda$. Then $F_k \ge 2\epsilon_0 \lambda$ for every $k>j$. 
Now Lemma~\ref{lem:induction} with $m=j+1$ implies that $E_{j+1}\le \frac12\delta^n\epsilon_0^2 \lambda$. Therefore, 
\[
\fint_{2B_{j+1}} |Du|\, dx 
= 
F_{j+1} - \delta^{-n} \epsilon_0^{-1} E_{j+1}
\ge 
2\epsilon_0 \lambda - \tfrac12\epsilon_0 \lambda
\ge 
\epsilon_0 \lambda.
\]
From this we conclude that \eqref{eq:inductionAss2} is satisfied for 
$m=j+2$, which by the lemma in turn implies that 
$E_{j+2}\le \frac12\delta^n\epsilon_0^2 \lambda$. 
This in turn implies $E_{j+3}\le \frac12\delta^n\epsilon_0\lambda$, 
and so on.
Thus \eqref{eq:inductionAss2} holds for every $k>j$ and so 
Lemma~\ref{lem:induction} implies that $\fint_{2B_m} |Du|\, dx \le \lambda$ 
for every $m\ge j$, so that $|Du(x_0)| \le \lambda$. 
\end{proof}

Now, we are ready to prove the continuity of the derivative $Du$.

\begin{proof}[Proof of Theorem~\ref{thm:C1}]
We continue in the setting fixed in the beginning of the section and note that 
Proposition~\ref{prop:bounded} implies that $Du$ is bounded in $\Omega'$.
Let $\lambda:=\epsilon_0^{-1}\|Du\|_{L^\infty(\Omega')}+1$. For $\epsilon\in (0,\epsilon_0]$, choose 
$\delta:=\min\{\delta_1,(\frac{\epsilon_0\epsilon}{32C_0C_1})^{1/\alpha}\}$ and then $j_0$ so large that the conditions of Lemma~\ref{lem:induction} are satisfied when $j=j_0$. 
We observe that $E_{j_0}\le \lambda$ by the definition of $\lambda$ and 
$E_{k+1} \le \tfrac1{16} E_k+ \tfrac18 \delta^n \epsilon_0\epsilon \lambda$ for any $k\ge j_0$ (see 
beginning of the proof of Lemma~\ref{lem:comparison1}). 
Thus we may assume by increasing $j_0$ if necessary that 
$E_j\le \delta^n \epsilon_0 \epsilon\lambda $ for all $j\ge j_0$.
Then assumption \eqref{eq:inductionAss1} of Lemma~\ref{lem:induction} holds for any $j\ge j_0$.

Fix $\Omega''\Subset\Omega'$ and assume by increasing $j_0$ if necessary that 
$2B_{j_0}(x)\subset \Omega'$ for every $x\in \Omega''$. 
Consider Lebesgue points $x, y \in \Omega''$ of $Du$  
so close to one another that $|B_j(x) \setminus B_j(y)| \le \epsilon |B_j|$ for some 
$j\ge j_0$. Suppose that
\[
\big|(Du)_{B_j} - (Du)_{B_{m+1}}\big| \le (6+2^{n+1})\epsilon\lambda 
\]
for all $m\ge j+2$ (this is proved below). Then 
\begin{align*}
|Du(x)-Du(y)| 
&\le 
\lim_{m\to\infty} \big|(Du)_{B_j(x)} - (Du)_{B_{m+1}(x)}\big|
+\lim_{m\to\infty} \big|(Du)_{B_j(y)} - (Du)_{B_{m+1}(y)}\big| \\
&\qquad + \big|(Du)_{B_j(x)}-(Du)_{B_j(y)}\big| \\
&\le (6+2^{n+1})\epsilon\lambda + 2\,|B_j(x) \setminus B_j(y)|\, |B_j|^{-1} \|Du\|_{L^\infty(\Omega')} \le 2^{n+2}\epsilon\lambda.
\end{align*}
This implies the uniform continuity of $Du$ in $\Omega''$.

It remains to prove the inequality $\big|(Du)_{B_j} - (Du)_{B_{m+1}}\big| \le (6+2^{n+1})\epsilon\lambda $. Let us set
\[
K:=\big\{k\in\N: F_k < 2\epsilon\lambda\big\},
\]
where $F_k$ is defined in the proof of Proposition~\ref{prop:bounded}.
If $K\cap [j+1, m-1]=\emptyset$, then 
\[
\fint_{2B_{k}} |Du|\, dx 
= 
F_{k} - \delta^{-n} \epsilon_0^{-1} E_{k}
\ge 
2\epsilon \lambda - \epsilon \lambda
\ge 
\epsilon \lambda 
\quad \text{for all }\ j+1 \le k \le m-1.
\]
Therefore, the second assumption \eqref{eq:inductionAss2} of Lemma~\ref{lem:induction}
holds and so, from the second estimate in Lemma~\ref{lem:induction}, the desired inequality follows. 
Otherwise, let $j'=\max(K\cap [j+1, m-1])$ and 
$m'=\min(K\cap [j+1, m-1])$. Then applying Lemma~\ref{lem:induction}
when $m=m'-1>j$, we conclude that 
\[
|(Du)_{B_j} - (Du)_{B_{m'}}| \le 3\epsilon_0 \epsilon \lambda ,
\]
which together with the fact that $m'\in K$ implies
\[
|(Du)_{B_j}| 
\le 
|(Du)_{B_j}-(Du)_{B_{m'}} | + |(Du)_{B_{m'}}| 
\le 
3\epsilon_0\epsilon\lambda + F_{m'}
\le 
(3+2^n)\epsilon\lambda.
\]
Similarly, applying the second estimate in Lemma~\ref{lem:induction} when $j=j'$, we also 
obtain $|(Du)_{B_{j'}} - (Du)_{B_m}| \le \epsilon_0 \epsilon \lambda$ and so $|(Du)_{B_m}| \le (3+2^n)\epsilon\lambda$. Therefore, $|(Du)_{B_j} - (Du)_{B_m}| \le |(Du)_{B_j}| + |(Du)_{B_m}| \le(6+2^{n+1})\epsilon\lambda$, as claimed.
\end{proof}

%
%
%

\section*{Acknowledgment}

M. Lee was supported by NRF grant funded by MSIT (NRF-2022R1F1A1063032). J. Ok was supported by NRF grant funded by MSIT (NRF-2022R1C1C1004523).
\bibliographystyle{amsplain}

\begin{thebibliography}{99}

\bibitem{AceMin01} 
E. Acerbi and G. Mingione:
\emph{Regularity results for a class of functionals with non-standard growth},
Arch. Ration. Mech. Anal. 156 (2001), no.~2, 121--140.

\bibitem{AceMin05} 
E. Acerbi and G. Mingione:
\emph{Gradient estimates for the $p(x)$-Laplacean system}, 
J. Reine Angew. Math. 584 (2005), 117--148. 


\bibitem{AceMin07} 
E. Acerbi and G. Mingione:
\emph{Gradient estimates for a class of parabolic systems},
Duke Math. J. 136 (2) (2007), 285--320.

\bibitem{ADN59} 
S. Agmon, A. Douglis and L. Nirenberg, 
\emph{Estimates near the boundary for solutions of elliptic partial differential equations satisfying general boundary conditions. I}, 
Comm. Pure Appl. Math. 12 (1959), 623--727.

\bibitem{Ancona09} 
A. Ancona:
\emph{Elliptic operators, conormal derivatives and positive parts of functions.
With an appendix by Haïm Brezis},
J. Funct. Anal. 257 (2009), no. 7, 2124--2158.

\bibitem{Ant_pp}
C.A. Antonini:
Local and global $C^{1,\beta}$-regularity for uniformly elliptic quasilinear equations of $p$-Laplace and Orlicz-Laplace type, Arxiv:2601.07140v2.


\bibitem{Bar23}
P. Baroni: 
\emph{Gradient continuity for $p(x)$-Laplacian systems under minimal conditions on the exponent}, 
J. Differential Equations 367 (2023), 415--450.

\bibitem{Bar25}
P. Baroni: 
\emph{A new condition ensuring gradient continuity for minimizers of non-autonomous functionals with mild phase transition},
Calc. Var. Partial Differential Equations 64 (2025), no. 4, Paper No. 112.

\bibitem{BarC22}
P. Baroni and A. Coscia: 
\emph{Gradient regularity for non-autonomous functionals with Dini or non-Dini continuous coefficients},
Electron. J. Differential Equations (2022), article 80.

\bibitem{BarCM16}
P. Baroni, M. Colombo, G. Mingione:
\emph{Non-autonomous functionals, borderline cases and related function classes},
St. Petersburg Math. J. 27 (2016), no. 3, 347--379.

\bibitem{BarColMin18}
P. Baroni, M. Colombo, G. Mingione:
\emph{Regularity for general functionals with double phase}, Calc. Var. Partial Differential Equations 57 (2018), no. 2, Paper No. 62, 48 pp.


\bibitem{BelS20}
P.\ Bella and M.\ Schäffner: 
\emph{On the regularity of minimizers for scalar integral functionals with growth}, 
Anal. PDE 13 (2020), 2241--2257.


\bibitem{Brezis08}
H. Brezis:
\emph{On a conjecture of J. Serrin},
Atti Accad. Naz. Lincei Rend. Lincei Mat. Appl. 19 (2008), no. 4, 335--338.



\bibitem{ByunOh17}
S. Byun and J. Oh:
\emph{Global gradient estimates for the borderline case of double phase problems with BMO coefficients in nonsmooth domains},
J. Differential Equations 263 (2017), no. 2, 1643--1693.

\bibitem{BO16}
S. Byun and J. Ok, 
\emph{On $W^{1,q(\cdot)}$-estimates for elliptic equations of p(x)-Laplacian type},
J. Math. Pures Appl. (9) 106 (2016), no. 3, 512--545.


\bibitem{BOR16}
S. Byun, J. Ok and S. Ryu, 
\emph{Global gradient estimates for elliptic equations of $p(x)$-Laplacian type with BMO nonlinearity},
J. Reine Angew. Math. 715 (2016), 1--38.




\bibitem{Caccio34}
R. Caccioppoli,
\emph{Sulle equazioni ellittiche a derivate parziali con n variabili indipendenti},
Atti Accad. Naz. Lincei, Rend. Lincei, Mat. Appl. 19 (1934) 83--89.


\bibitem{CheGSW21} 
I. Chlebicka, P. Gwiazda, A. \'Swierczewska-Gwiazda and A. Wr\'oblewska-Kami\'nska: 
\textit{Partial Differential Equations in Anisotropic Musielak-Orlicz Spaces}, 
Springer Monographs in Mathematics, Springer, Cham, 2021.




\bibitem{ColMin15} 
M. Colombo and G. Mingione, 
\emph{Regularity for double phase variational problems}, Arch. Ration. Mech. Anal. 215 (2015), no. 2, 443–496.


\bibitem{ColMin16} 
M. Colombo and G. Mingione, 
\emph{Calder\'on-Zygmund estimates and non-uniformly elliptic operators}, J. Funct. Anal. 270 (2016), no. 4, 1416--1478.


\bibitem{CoMin99}
A. Coscia and G. Mingione:
\emph{H\"older continuity of the gradient of $p(x)$-harmonic mappings},
C. R. Acad. Sci. Paris Sér. I Math. 328 (1999), no.~4, 363--368.

\bibitem{CupM_pp}
G.\ Cupini and P.\ Marcellini: 
\emph{Global boundedness of weak solutions to a class of nonuniformly elliptic equations},
Math. Ann., to appear.

\bibitem{DeFilMin20}
C. De Filippis and G. Mingione: 
\emph{A borderline case of Calder\'on-Zygmund estimates for nonuniformly elliptic problems},
Algebra i Analiz 31 (2019), no. 3, 82--115; reprinted in
St. Petersburg Math. J. 31 (2020), no. 3, 455--477.

\bibitem{DeFilMin21}
C. De Filippis and G. Mingione:
\emph{Lipschitz bounds and nonautonomous integrals},
Arch. Ration. Mech. Anal. 242 (2021), no. 2, 973--1057.


\bibitem{DeFilMin23}
C. De Filippis and G. Mingione:
\emph{Regularity for double phase problems at nearly linear growth},
 Arch. Ration. Mech. Anal. 247 (2023), no. 5, article 85.


\bibitem{DeFilMin231}
C. De Filippis and G. Mingione:
\emph{Nonuniformly elliptic Schauder theory}, Invent. Math. 234 (2023), no. 3, 1109--1196.


\bibitem{DeFilMin25}
C. De Filippis and G. Mingione:
\emph{The sharp growth rate in nonuniformly elliptic Schauder theory}, 
Duke Math. J., to appear. arXiv:2401.07160. 



\bibitem{DieHHR11} 
L.\ Diening, P.\ Harjulehto, P.\ H\"ast\"o and M.\ R\r u\v zi\v cka:
\emph{Lebesgue and Sobolev Spaces with Variable Exponents}, 
Lecture Notes in Mathematics, vol.~2017, Springer, Heidelberg, 2011.


\bibitem{DiFazio96} 
G. Di Fazio, 
{$L^p$ estimates for divergence form elliptic equations with discontinuous coefficients},
Boll. Un. Mat. Ital. A (7) 10 (1996), no. 2, 409--420.


\bibitem{DongKim17} 
H. Dong and S. Kim, 
\emph{On $C^1$, $C^2$, and weak type-$(1,1)$ estimates for linear elliptic operators},
 Comm. Partial Differential Equations 42 (2017), no. 3, 417--435. 

\bibitem{DongKim18} 
H. Dong and S. Kim, 
\emph{On $C^1$, $C^2$, and weak type-$(1,1)$ estimates for linear elliptic operators: Part II}, Math. Ann. 370 (2018), no. 1-2, 447–489.



\bibitem{HHK16}
P. Harjulehto, P. H\"ast\"o and R. Kl\'en
\emph{Generalized Orlicz spaces and related PDE},
Nonlinear Anal. 143 (2016), 155--173.

\bibitem{HarH19}
P. Harjulehto and P. H\"ast\"o:
\emph{Orlicz spaces and generalized Orlicz spaces}, 
Lecture Notes in Mathematics, vol.~2236, Springer, Cham, 2019. 

\bibitem{HHL21}
P. Harjulehto, P. H\"ast\"o and M. Lee: 
\emph{Hölder continuity of $\omega$-minimizers of functionals with generalized Orlicz growth},
Ann. Sc. Norm. Super. Pisa Cl. Sci. (5) 22 (2021), no. 2, 549--582.

\bibitem{HW55}
P. Hartman and A. Wintner,
\emph{On uniform Dini conditions in the theory of linear partial differential equations of elliptic type},
Amer. J. Math. 77 (1955), 329--354.

\bibitem{HasO22} 
P. H\"ast\"o and J. Ok: 
\emph{Maximal regularity for local minimizers of non-autonomous functionals}, 
J. Eur. Math. Soc. (JEMS) 24 (2022), no. 4, 1285--1334.

\bibitem{HasO22b} 
P. H\"ast\"o and J. Ok: 
\emph{Regularity theory for non-autonomous partial differential equations without Uhlenbeck structure},
Arch. Ration. Mech. Anal. 245 (2022), no. 3, 1401--1436.

\bibitem{HasO23} 
P. H\"ast\"o and J. Ok: 
\emph{Regularity theory for non-autonomous problems with a priori assumptions}, 
Calc. Var. Partial Differential Equations 62 (2023), article 251. 


\bibitem{JMV09} T. Jin, V. Maz'ya and J. Van Schaftingen: 
\emph{Pathological solutions to elliptic problems in divergence form with continuous coefficients},
C. R. Math. Acad. Sci. Paris 347 (2009), no. 13-14, 773--778.

\bibitem{KarLee22} A. Karppinen and M. Lee:
\emph{Hölder continuity of the minimizer of an obstacle problem with generalized Orlicz growth},
Int. Math. Res. Not. IMRN 2022, no. 19, 15313--15354.

\bibitem{KinZhou99} 
J. Kinnunen and S. Zhou: 
\emph{A local estimate for nonlinear equations with discontinuous coefficients},
Comm. Partial Differential Equations 24 (1999), no. 11-12, 2043--2068.


\bibitem{KuuMin14} 
T. Kuusi and G. Mingione:
\emph{A nonlinear Stein theorem}, 
Calc. Var. Partial Differential Equations 51 (2014), no. 1-2, 45--86.

\bibitem{LOP24}
M. Lee, J. Ok and J. Pyo:
\emph{Calderón-Zygmund estimates for nonlinear equations of differential forms with BMO coefficients},
J. Differential Equations 397(2024) 262--288.


\bibitem{Li17} 
Y. Li:
\emph{On the $C^1$ regularity of solutions to divergence form elliptic systems with Dini-continuous coefficients},
Chinese Ann. Math. Ser. B 38 (2017), no. 2, 489--496.


\bibitem{Lie91} 
G. M. Lieberman:
\emph{The natural generalization of the natural conditions of Ladyzhenskaya and Ural'tseva for elliptic equations},
Comm. Partial Differential Equations 16 (1991), no. 2-3, 311--361.


\bibitem{Lie92} 
G. M. Lieberman:
\emph{On the natural generalization of the natural conditions of Ladyzhenskaya and Ural'tseva},
Banach Center Publ., 27, Part 1, 2
Polish Academy of Sciences, Institute of Mathematics, Warsaw, 1992, 295--308.

\bibitem{Man88} 
J. J. Manfredi: 
\emph{Regularity for minima of functionals with $p$-growth},
J. Differential Equations 76 (1988), no. 2, 203--212.

\bibitem{OhnS25}
T. Ohno and T. Shimomura:
\emph{Boundedness of maximal operators and Sobolev inequalities on Musielak-Orlicz spaces over unbounded metric measure spaces},
Bul. Sci. Math. 199 (2025), article 103546.

\bibitem{Ok16} 
J. Ok, 
\emph{Gradient continuity for $p(\cdot)$-Laplace systems},
Nonlinear Anal. 141 (2016), 139--166.
%

\bibitem{Scha34} J. Schauder: 
\emph{\"Uber lineare elliptische Differentialgleichungen zweiter Ordnung}, 
Math. Z. 38 (1934), no. 1, 257--282.

\bibitem{Scha37} J. Schauder: 
\emph{Numerische Absch\"atzungen in elliptischen linearen Differentialgleichungen},
Stud. Math. 5 (1937) 34--42.

\bibitem{Spa65}
S.\ Spanne:
\emph{Some function spaces defined using the mean oscillation over cubes}, 
Ann. Scuola Norm. Sup. Pisa Cl. Sci. (3) 19 (1965), 593--608. 

\bibitem{Stepanov93}
V. D. Stepanov:
\emph{The weighted Hardy's inequality for nonincreasing functions},
Trans. Amer. Math. Soc. 338 (1993), no. 1, 173--186.

\bibitem{WeiXY25}
F.\ Weisz, G.\ Xie and D.\ Yang: 
\emph{Boundedness of dyadic maximal operators on Musielak–Orlicz type spaces and its applications}, 
J. Geom. Anal. 35 (2025), article 81.

%
%
%
%
%
%
%
%
%
%
%
%
%
%
%
%
%
%
%
%
%
%
%
%
%
%
%
%
%
%
%
%
%
%
%
%
%
%
%
%
%
%
%
%
%
%
%
%
%
%
%
%
%
%
%
%
%
%
%
%
%
%
%
%
%
%
%
%
%
%
%
%
%
%
%
%
%
%
%
%
%
 
\end{thebibliography}

\end{document}